\newtheorem{theorem}{\bf Theorem}[section]
\newtheorem{lemma}[theorem]{\bf Lemma}
\newtheorem{proposition}[theorem]{\bf Proposition}
\newtheorem{example}[theorem]{\bf Example}
\newenvironment{proof}{\noindent{\em Proof:}}{\quad \hfill$\Box$\vspace{2ex}}
\newenvironment{proofspecial}{\noindent{\em Proof of Lemma 2.1:}}{\quad \hfill$\Box$\vspace{2ex}}
\newenvironment{remark}{\noindent{\bf Remark}}{\vspace{2ex}}
\renewcommand{\theequation}{\arabic{section}.\arabic{equation}}
\DeclareMathOperator{\diag}{diag}
\def \aR {\mathbb R}
\def \bL {\mathcal{L}}
\def \bO {\mathcal{O}}
\def \bD {\mathcal{D}}
\def \bW {\mathcal{W}}
\def \BA {\textbf{A}}
\def \BB {\textbf{B}}
\def \Bf {\textbf{f}}
\def \Bx {\textbf{x}}
\def \Bu {\textbf{u}}
\def \Bc {\textbf{c}}
\def \Bq {\textbf{q}}
\def \Br {\textbf{r}}
\begin{document}
\begin{CJK}{GBK}{song}

\title{\bf  Levin methods for highly oscillatory integrals with singularities}%Study Report 8
\author{ Yinkun Wang
         \footnotemark[1],
         Shuhuang Xiang
         \footnotemark[2]
         \footnotemark[3]
         }

\renewcommand{\thefootnote}{\fnsymbol{footnote}}

\footnotetext[1]{Department of Mathematics, National University of Defense Technology, Changsha, Hunan, P. R. China}
\footnotetext[2]{School of Mathematics and Statistics, Central South University, Changsha, Hunan, P. R. China.}
\footnotetext[3]{Correspondence author: xiangsh@mail.csu.edu.cn}
\date{}
\maketitle{}
\begin{abstract}
{\color{black}
In this paper, new Levin methods are presented for calculating oscillatory integrals with algebraic and/or logarithmic singularities.
}
{\color{black}
To avoid singularity,} the technique of singularity separation is applied and then the singular ODE 
{\color{black}
occurring} in classic Levin methods is converted into two kinds of non-singular ODEs. 
{\color{black}
The solutions of one can be obtained explicitly, while those of the other can be solved efficiently by collocation methods.}
The proposed methods can attach arbitrarily high asymptotic orders and also enjoy superalgebraic convergence with respect to the number of collocation points. 
Several numerical experiments are presented to validate the efficiency of the proposed methods.
\end{abstract}

\noindent \textbf{Keywords} Levin method, highly oscillatory integral, algebraic singularity, logarithmic singularity

\noindent \textbf{MSC(2010)}  65D30, 65D32, 65L99
\section{Introduction}

The computation of $\int_a^b f(x)e^{i w g(x)}dx$
 occurs in a wide range of practical problems
and applications,
{\color{black}
 e.g.,} nonlinear optics, fluid dynamics, computerized tomography,
celestial mechanics, electromagnetics, acoustic scattering, etc. 
The high oscillation ($|w|\gg 1$) means that classical Gaussian quadrature requires $\bO(w)$ quadrature points, which is impractical.

To handle the difficulty caused by rapid oscillation, many effective methods have been proposed for oscillatory integrals without 
{\color{black}
singularities, such} as Filon-type methods \cite{GAO2017,ISERLES2004,ISERLES2005N}, Levin methods \cite{LEVIN1982,OLVER2006}, the generalized quadrature rule \cite{EVANS2000}, 
{\color{black} and} numerical steepest-descent methods \cite{HUYBRECHES2006}. We refer interested readers to \cite{ISERLES2018B,HUYBRECHES2009} for a review of these methods.

However, in the context of electromagnetic and acoustic scattering, one frequently must compute many oscillatory integrals with singularities of the form 
 \begin{equation}\label{SOI}
I_{w}^{[0,a]}[f,s,g]:=\int_0^a f(x)s(x)e^{iwg(x)}dx
\end{equation}
(see \cite{BRUNO2004,CHANDLER2012,COLTON1983,ISERLES2018B,DOMINGUEZ2013,SPENCE2014}),
where $f$ and $g$ are suitably smooth functions, $g'(x)\neq 0, x\in[0,a]$, and $w$ is a real 
{\color{black} parameter, the absolute value of which could be extremely large.} Without loss of generality, we assume $g(0)=0$ and $g'(x)>0$ for $x\in[0,a]$. If $g(0)\neq 0$, we replace $g(x)$ by $g(x)-g(0)$, and if $g'(x)<0, x\in(0,a]$, the function $g(x)$ is replaced by $-g(x)$ and $w$ by $-w$, respectively.
The function $s$ is singular 
{\color{black}
and the singularity locates at $x=0$.} If the
integral has finitely many singular points, then it can
be rewritten in terms of integrals of the form $I_{w}^{[0,a]}[f,s,g]$.

{\color{black}
A significant amount of work} has also been done on the computation of singular and oscillatory integrals of the type \eqref{SOI}.
The asymptotic behavior for the integral was obtained by repeated integration by parts \cite{ERDELYI1955,GAO2016} or  by the inverse functions \cite{LYNESS2009}. When the oscillator is linear, i.e., $g(x)=x$, it was studied by the Clenshaw-Curtis-Filon-type methods \cite{KANG2011,KANG2013,XAINGGUO2014},
{\color{black}
 in which the modified moments} can be obtained numerically by stable recurrence relations.
However, 
{\color{black}
these methods} may not be suitable for the general case since it is difficult to 
{\color{black}
accurately calculate the modified moments  $\int_0^a T_j(x)s(x)e^{iwg(x)}dx$, where $T_j(x)$} denotes the shifted Chebyshev polynomial of the first kind of degree $j$.

 A composite Filon-Clenshaw-Curtis quadrature was proposed in \cite{DOMINGUEZ2013} 
 {\color{black}
 based on} efficient evaluation of the inverse function of the oscillator for the case of nonlinear oscillators.
Another kind of composite method was developed recently in \cite{MAXU2017} based on the careful design of meshes to achieve a convergence of the polynomial order or of the exponential order. 
{\color{black}
The main disadvantage of the composite methods is that sub-intervals near the singular point in the designed mesh have very small lengths and thus may cause serious round-off-error problems.}

Based on the numerical steepest method, Gauss-type quadrature has been used for computation of the highly oscillatory integrals with algebraic singularities with a linear
oscillator \cite{HE2014,XU2015,Xu2016}. There is still much  work  to do on the computation of the singular and oscillatory integrals, especially with complicated oscillators 
{\color{black} 
in terms of efficiency and accuracy.}

In this paper, we are interested in efficient numerical methods for (1.1) with
\[ s(x)=x^\alpha\;\text{or}\; x^\alpha\log x,\quad 0<|\alpha|<1,\]
and develop new efficient methods based upon the classic Levin method, which is quite different from the existing methods, to compute the integral of the type $I_{w}^{[0,a]}[f,s,g]$. For the specification, we set $s_1(x)=x^\alpha$ and  $s_2(x)=x^\alpha\log x$.

The spirit of 
{\color{black}
the Levin method} in the computation of the integral $I_w^{[0,a]}[f,s,g]$  is to find a function $p$ such that $\left(p(x)e^{iwx}\right)'=f(x)s(x)e^{iwx}$. This is 
{\color{black}
equivalent to obtaining} a particular solution of the ODE,
\begin{equation}\label{levinode}
\bL[p](x)\equiv p'(x)+iwp(x)=f(x)s(x).
\end{equation}
However, the particular solution of the ODE \eqref{levinode} 
{\color{black}
cannot} be obtained directly by collocation methods due to the singular forcing function. The singularity would cause large errors.

To deal with the singularity,  a technique of singularity separation is developed.  The computation of integral $I_{w}^{[0,a]}[f,s,g]$ can be converted into the solution of two kind of ODEs.
One kind of ODE has an explicit solution with the vanishing initial condition, while the other 
possesses a specific structure of the form
\begin{equation}\label{modelode}
iwg'(x)c_0+g(x)q_1'(x)+[1+\alpha+iwg(x)]g'(x)q_1(x)=f(x),
\end{equation}
where $f$ and $g$ are given functions,  and function $q_1$ and coefficient $c_0$ are unknown 
{\color{black}
and must be determined.}
It will be proved in this paper that there exists at least one pair solution of a non-oscillatory function $q_1$ and a number $c_0$ for \eqref{modelode}. The term \textit{non-oscillatory} is understood in the sense that the function's derivatives of high orders are independent of the frequency. 
This means that the  ODE \eqref{modelode} can be solved well by collocation methods without the influence of the high oscillation. A new collocation method is developed for 
{\color{black}
the ODE \eqref{modelode}} by adopting the differential matrix based on the Chebyshev-Gauss-Radau points. 
In particular, following the asymptotic method and convergence rates for Filon-type method in \cite{ISERLES2004,ISERLES2005N}, the convergence for the new Levin methods is derived.

We also show the equivalence between the new Levin methods and the corresponding Filon-type methods 
{\color{black}
with a proper basis.} The new methods for the oscillatory integrals with algebraic and/or logarithmic singularities can avoid 
{\color{black}
the round-off-error problem} caused by the tiny meshes and the computation of the modified moments, and also enjoy 
{\color{black}
the following merits.}
\begin{enumerate}
  \item They are applicable for 
  {\color{black} 
  nonlinear oscillators.}
  \item They converge 
  {\color{black}
  supralgebraically} with respect to the number of collocation points and the higher oscillation.
  \item Their asymptotic order with respect to the frequency is $\bO(w^{-s-1-\min\{1+\alpha,1\}})$ for algebraic singularities and  $\bO(\delta_{\alpha}(w)w^{-s-1-\min\{1+\alpha,1\}})$ for algebraic and logarithmic singularities, where $\delta_\alpha$ is defined in \eqref{delta1}.
\end{enumerate}

{\color{black}
The rest of this paper} is organized as follows. In Section 2, we develop a new Levin method for oscillatory integrals with algebraic singularity and then analyze the asymptotic order and the convergence. The equivalence between the new Levin method and the Filon-type method is studied. Another new Levin method is developed analogously for $I_{w}^{[0,a]}[f,s_2,g]$ in 
{\color{black}
Section 3}. We  construct the new collocation method for 
{\color{black}
the ODE \eqref{modelode} in Section 4. 
Numerical results are shown in Section 5 to validate the theory developed herein. }

\section{New Levin method for \texorpdfstring{$I_w^{[0,a]}[f,s_1,g]$}{I1}}

We commence from the integral $I_w^{[0,a]}[f,s_1,g]$ with algebraic singularity, assuming that the oscillator $g$ is strictly monotone in $[0,a]$. To cope with the singularity,  our basic idea is to seek a particular solution 
{\color{black}
the singularity of which is represented separately.}

Inasmuch as the observation that the solution of the corresponding ODE \eqref{levinode} possesses the algebraic singularity, a particular solution $p$  is assumed to have a specific form
 \[p(x)=q(x)g^{\alpha}(x)+h(x),\]
 where 
 {\color{black}
 the functions $q$ and $h$} need to be determined. The selection of $g^\alpha(x)$ instead of $x^\alpha$ is necessary, which will be seen later. 
 The substitution of $p$ in 
 {\color{black}
 the ODE \eqref{levinode}} leads to a new ODE for $q$ and $h$,
  \begin{equation}\label{gpeq}
 \begin{split}
\left (q'(x)+iw g'(x)q(x)\right)g^{\alpha}(x) & +h'(x)+iw g'(x)h(x)\\
 & +\alpha q(x)g'(x)g^{\alpha-1}(x)=f(x)x^{\alpha}.
 \end{split}
 \end{equation} 
{\color{black}
A new function is defined,}
\begin{equation}\label{f1}
f_1(x)=\begin{cases}f(x)\left(\frac{x}{g(x)}\right)^\alpha, \;x\neq0, \\
 \frac{f(0)}{(g'(0))^\alpha},\quad\quad\; x=0. \end{cases}
\end{equation}
Two decoupled ODEs for $q$ and $h$ are obtained from \eqref{gpeq} 
{\color{black}
separately} by the superposition principle according to the singularity:
\begin{eqnarray}
  q'(x)+iw g'(x)q(x)+\alpha g'(x)\frac{q(x)-c_0(1-e^{-iwg(x)})}{g (x)} &=& f_1(x), \label{gqeq}\\
  h'(x)+iw g'(x)h(x) +\alpha c_0g'(x)\frac{1-e^{-iwg(x)}}{g^{1-\alpha}(x)}&=& 0, \label{gheq}
\end{eqnarray}
where $c_0$ is an unknown parameter to be determined. Note that a minor trick was used in the splitting procedure by adding and then subtracting the term $\alpha c_0g'(x)\frac{1-e^{-iwg(x)}}{g^{1-\alpha}(x)}$. This minor modification  makes the Levin method effective in computing singular and oscillatory integrals.

{\color{black}
Letting $q_1(x)=\frac{q(x)-c_0(1-e^{-iwg(x)})}{g(x)}$,} the equation \eqref{gqeq} is simplified in a  clear form,
\begin{equation}\label{gq1eq}
    iwg'(x)c_0+g(x)q_1'(x)+[1+\alpha+iwg(x)]g'(x)q_1(x)=f_1(x).
\end{equation}
Note that the solution $q$ in \eqref{gqeq} might be oscillatory, while the new defined function $q_1$ is non-oscillatory. In fact, we rigorously prove the non-oscillation property of the solution of \eqref{gq1eq}  in the following lemma. To avoid distraction from the narrative of the new Levin method, its proof is given in Appendix A.
\begin{lemma}\label{lemma3}
    Suppose that $f_1\in C^{2n+1}[0,a]$ and $g\in C^{2n+2}[0,a]$ with $g(0)=0$ and $g'(x)>0, x\in[0,a]$. If $f_1$ and $g$ are independent of $w$, then there exist a function $q_1$ and a number $c_0$ satisfying \eqref{gq1eq} such that
    \begin{equation}\label{property}
    |c_0|<C/w \;\text{and}\; \|\bD^{j}q_1\|_\infty < C/w,\;j=0,1,\ldots,n,
    \end{equation}
    where $C$ is a constant independent of $w$.
 \end{lemma}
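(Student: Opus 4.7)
The plan is to build $q_1$ and $c_0$ as a truncated asymptotic series in $1/w$ plus a small exact correction that makes \eqref{gq1eq} hold identically. First, I would seek a formal expansion $q_1 \sim \sum_{k \ge 1} w^{-k} q_1^{(k)}$, $c_0 \sim \sum_{k \ge 1} w^{-k} c_0^{(k)}$ with each $q_1^{(k)}$, $c_0^{(k)}$ independent of $w$. Matching powers of $w$ in \eqref{gq1eq}, the $w^{0}$ level reads $ig'(x) c_0^{(1)} + i g(x) g'(x) q_1^{(1)}(x) = f_1(x)$; evaluation at $x = 0$ forces $c_0^{(1)} = f_1(0)/(i g'(0))$, after which $q_1^{(1)}(x) = -i[f_1(x)/g'(x) - f_1(0)/g'(0)]/g(x)$ extends smoothly by Hadamard's lemma because the numerator vanishes at $x=0$ while $g$ vanishes simply. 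For $k \ge 1$ the $w^{-k}$ level gives the recursion
\begin{equation*}
c_0^{(k+1)} = i(1+\alpha)\, q_1^{(k)}(0), \qquad q_1^{(k+1)}(x) = i\,\frac{(q_1^{(k)})'(x)}{g'(x)} + i(1+\alpha)\,\frac{q_1^{(k)}(x) - q_1^{(k)}(0)}{g(x)},
\end{equation*}
in which each step loses at most one derivative; hence the smoothness hypotheses on $f_1$ and $g$ are ample to produce $q_1^{(1)},\dots,q_1^{(N)} \in C^n$ for $N$ of size comparable to $n$.

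Next, set $q_1^{[N]} := \sum_{k=1}^{N} w^{-k} q_1^{(k)}$ and $c_0^{[N]} := \sum_{k=1}^{N} w^{-k} c_0^{(k)}$. By design of the recursion, substitution into \eqref{gq1eq} cancels all intermediate orders and leaves only the single unmatched contribution at order $w^{-N}$, namely
\begin{equation*}
r_N(x) := w^{-N}\bigl[g(x)(q_1^{(N)})'(x) + (1+\alpha) g'(x) q_1^{(N)}(x)\bigr],
\end{equation*}
so the truncation satisfies \eqref{gq1eq} up to the forcing error $r_N$, whose $C^n$-norm is $O(w^{-N})$.

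For the exact correction, write $q_1 = q_1^{[N]} + \hat q$, $c_0 = c_0^{[N]} + \hat c_0$; then $\hat q$, $\hat c_0$ solve the same ODE with forcing $-r_N$. Multiplying the ODE by $g(x)^\alpha e^{iwg(x)}$ converts the left-hand side into $\tfrac{d}{dx}[g^{1+\alpha}(x) e^{iwg(x)} \hat q(x)]$; integrating from $0$ and substituting $v = g(y)$ yields
\begin{equation*}
g(x)^{1+\alpha} e^{iwg(x)}\, \hat q(x) = \int_0^{g(x)} v^\alpha e^{iwv}[\Phi(v) - iw\hat c_0]\, dv, \qquad \Phi(v) := \frac{-r_N(g^{-1}(v))}{g'(g^{-1}(v))}.
\end{equation*}
The natural choice $\hat c_0 := \Phi(0)/(iw)$ kills the bracket at $v=0$, so $\Phi(v) - iw\hat c_0 = v\Psi(v)$ with $\Psi$ smooth and bounded of order $w^{-N}$. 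The integrand then becomes $v^{1+\alpha} e^{iwv}\Psi(v)$ with positive exponent $1+\alpha > 0$, and a single integration by parts against $G_{1+\alpha}(v) := \int_0^v s^{1+\alpha} e^{iws}\,ds$, which satisfies $|G_{1+\alpha}(v)| \le 2 v^{1+\alpha}/|w|$ (by a direct bound after one further IBP, valid since $\alpha > -1$), controls the right-hand side by $C g(x)^{1+\alpha} w^{-N-1}$. Dividing by $g(x)^{1+\alpha}$ yields $\|\hat q\|_\infty = O(w^{-N-1})$ and $|\hat c_0| = O(w^{-N-1})$. Derivative bounds $\|\bD^j \hat q\|_\infty = O(w^{\,j-N-1})$ for $0 \le j \le n$ follow by iteratively differentiating the ODE and solving for $\hat q^{(j)}$; the divisions by $g$ remain regular because $\hat c_0$ was chosen precisely to kill the endpoint singularity. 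Choosing $N = n+1$ therefore yields $|c_0| \le C/w$ and $\|\bD^j q_1\|_\infty \le C/w$ for $0 \le j \le n$.

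The main obstacle is the algebraic endpoint singularity $v^\alpha$ with $\alpha$ possibly negative, which blocks a naive integration by parts on $v^\alpha e^{iwv}$. The key device is to use $\hat c_0$ to extract a factor $v$ from the forcing before invoking oscillation, upgrading $v^\alpha$ to $v^{1+\alpha}$ with a positive exponent, so that the oscillatory gain $1/w$ can be harvested without encountering non-integrable endpoint behavior.
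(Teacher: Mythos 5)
Your proposal is correct and follows essentially the same route as the paper: the paper's successive-approximation scheme $\Phi^{[k]},c_0^{[k]},q_1^{[k]}$ generates exactly your truncated $1/w$-expansion (same recursion, same Hadamard divisions by $g$, same determination of the constants by evaluation at $x=0$), and the exact solution is then compared to the truncation by multiplying the residual ODE by the integrating factor $g^{k+\alpha}e^{iwg}$ and bounding the resulting integral representation (the paper's Lemma A.1), followed by the same bootstrapping of derivative bounds through repeated differentiation of the ODE. The only real difference is that you gain an extra factor $w^{-1}$ on the correction term by using $\hat c_0$ to extract a factor of $v$ and then integrating by parts against $e^{iwv}$, whereas the paper simply takes absolute values inside the integral (no use of oscillation) --- which already suffices because the residual is $O(w^{-n-1})$.
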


 Lemma \ref{lemma3} is the cornerstone of the proposed new method since 
 {\color{black}
 it ensures that the ODE \eqref{gq1eq} can be solved efficiently by the collocation method based on polynomials no matter how large the absolute value of $w$ is. }

Once the value of $c_0$ is known, a particular solution $h$ of \eqref{gheq} subject to the initial condition $h(0)=0$ is well-known by the standard ODE theory, given explicitly by
\begin{equation}\label{hsol}
\begin{split}
h(x) &=\alpha c_0e^{-iwg(x)}\int_0^x\frac{g'(t)(1-e^{iwg(t)})}{g^{1-\alpha}(t)}dt \\
&= \alpha c_0e^{-iwg(x)} \int_0^{g(x)}\frac{1-e^{iwt}}{t^{1-\alpha}}dt \\
&=c_0e^{-iwg(x)}\left(g^\alpha(x)+\frac{\alpha \Gamma(\alpha,-iwg(x)-\Gamma(\alpha+1)}{(-iw)^\alpha}\right),
\end{split}
\end{equation}
 where $\Gamma(s,z)$  is the incomplete gamma function \cite{STEGUN}. It is the reason why we choose $g^\alpha(x)$ to express the algebraic singularity. Instead, if $x^\alpha$ is adopted, it is difficult to evaluate the solution $h$ explicitly or numerically. 

{\color{black}
We now formally propose} the new Levin method for the integral $I_w^{[0,a]}[f,s_1,g]$. To this end, we define a new operator for a given function $g$, a number $w$ and a number $\alpha$:
\[
  \bW_{w,\alpha,g}[c_0,q_1]:=iwg'(x)c_0+g(x)q_1'(x)+[1+\alpha+iwg(x)]g'(x)q_1(x).
\]
For notational simplicity, the explicit dependence of the new operator on $w, \alpha$, and $g$ will be suppressed, to be understood only implicitly.  Let $\{\phi_j\}_{j=1}^{n}$ be a basis of functions independent of $w$. Moreover, let $\{x_j\}_{j=0}^{n}$ be a set of collocation nodes such that $0=x_0<x_1<\ldots<x_{n}=a$. We are seeking a pair of a function $q_1=\sum_{j=1}^{n} c_j\phi_j$ and a number $c_0$ such that they satisfy  \eqref{gq1eq} at the collocation nodes: this reduces to the linear system
\begin{equation}\label{interp1}
\bW[c_0,q_1](x_j)=f_1(x_j), j=0,1,\ldots,n,
\end{equation}
where $f_1$ is defined in \eqref{f1}. 
Written in the form of a vector, the system \eqref{interp1} becomes 
\[(\BA+iw\BB)\Bc=\Bf_1,\]
where the $(n+1)\times(n+1)$ matrices $\BA$ and $\BB$ are independent of $w$. Specifically, 
\[
  \BB=\begin{pmatrix}
  g'(0)   & 0 & 0 & \ldots & 0\\
  g'(x_1) & g(x_1)g'(x_1)\phi_1(x_1) & g(x_1)g'(x_1)\phi_2(x_1) &\ldots  & g(x_1)g'(x_1)\phi_n(x_1)  \\
  g'(x_2) & g(x_2)g'(x_2)\phi_1(x_2) & g(x_2)g'(x_2)\phi_2(x_2) &\ldots   & g(x_2)g'(x_2)\phi_n(x_2)  \\
  \vdots  & \vdots                   & \vdots                   &          \\
  g'(x_n) & g(x_n)g'(x_n)\phi_1(x_n) & g(x_n)g'(x_n)\phi_2(x_n) & \ldots & g(x_n)g'(x_n)\phi_n(x_n)
  \end{pmatrix},
\]
and hence the matrix $\BB$ is non-singular once the basis $\{\phi_j\}_1^{n}$ is a Chebyshev set \cite{OLVER2006}.

\begin{proposition}
For sufficiently large $w$, the system \eqref{interp1} has a unique solution. Moreover, its solution $q_1$ is slowly oscillatory and both $q_1$ and $c_0$ are $\bO(w^{-1})$ as $w\rightarrow \infty$.
\end{proposition}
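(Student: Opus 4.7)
The plan is to reduce everything to a Neumann-series analysis of the $(n+1)\times(n+1)$ matrix $\BA+iw\BB$. Both $\BA$ and $\BB$ are assembled from the basis $\{\phi_j\}$, the collocation nodes $\{x_j\}$ and the oscillator $g$, none of which depend on $w$, and it was already noted just before the proposition that $\BB$ is non-singular whenever $\{\phi_j\}$ is a Chebyshev set. I would therefore factor
\[
\BA+iw\BB \;=\; iw\,\BB\!\left(\BI+\frac{1}{iw}\BB^{-1}\BA\right),
\]
and observe, in any submultiplicative matrix norm, that once $w$ is large enough that $\|\BB^{-1}\BA\|/w<1$ the inner factor is invertible by the standard Neumann series. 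This yields the unique-solvability half of the claim for all sufficiently large $w$.

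For the size estimates I would invert the factorization as
\[
(\BA+iw\BB)^{-1} \;=\; \sum_{k=0}^{\infty}\left(-\frac{1}{iw}\BB^{-1}\BA\right)^{k}(iw\BB)^{-1},
\]
a series whose $k$th term is $\bO(w^{-1-k})$. Since the data $\Bf_1$ defined in \eqref{f1} is independent of $w$, the coefficient vector $\Bc=(\BA+iw\BB)^{-1}\Bf_1$ admits the expansion
\[
\Bc \;=\; \frac{1}{iw}\BB^{-1}\Bf_1+\bO(w^{-2}),
\]
so every entry is $\bO(w^{-1})$. Reading off the first entry gives $c_0=\bO(w^{-1})$, and from $q_1=\sum_{j=1}^{n}c_j\phi_j$ together with the $w$-independence of each $\phi_j$ it follows that $q_1$ and all of its derivatives are $\bO(w^{-1})$. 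The latter is exactly the sense of \emph{slowly oscillatory} used throughout the paper, so both remaining assertions drop out of the same expansion.

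The main point to be careful about is not an obstacle so much as a bookkeeping item: the argument above is purely linear-algebraic and never touches Lemma \ref{lemma3}. I would deliberately avoid invoking the continuous ODE result here, because Lemma \ref{lemma3} concerns the exact solution of \eqref{gq1eq} while the proposition is a statement about the discrete collocation system; mixing the two would obscure a fact that already follows from $\BB$ being invertible and both $\BA,\BB,\Bf_1$ being $w$-independent. The only ingredient one genuinely needs beyond those three facts is a scalar threshold $w_0:=\|\BB^{-1}\BA\|$ beyond which the Neumann series converges, and I would state this explicitly in the proof so that the phrase ``for sufficiently large $w$'' becomes quantitative.
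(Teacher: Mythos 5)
Your argument is correct and complete. The paper itself disposes of this proposition in one line, invoking Cramer's rule in the style of Olver (2006) and Xiang (2007): there, $\det(\BA+iw\BB)=(iw)^{n+1}\det\BB+\bO(w^{n})$ is nonzero for large $w$ because $\det\BB\neq 0$, and each $c_j$ is a ratio of a determinant of degree $n$ in $w$ to one of degree $n+1$, whence $c_j=\bO(w^{-1})$. Your Neumann-series factorization $\BA+iw\BB=iw\,\BB(\BI+\tfrac{1}{iw}\BB^{-1}\BA)$ rests on exactly the same two structural facts --- $\BB$ invertible (from the Chebyshev-set hypothesis, as established just before the proposition) and $\BA,\BB,\Bf_1$ independent of $w$ --- so it is a variant of the same idea rather than a different strategy; what it buys over the Cramer's-rule citation is an explicit threshold $|w|>\|\BB^{-1}\BA\|$ and the full expansion $\Bc=\tfrac{1}{iw}\BB^{-1}\Bf_1+\bO(w^{-2})$, with leading term identified, rather than bare order estimates. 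Your decision not to route the argument through Lemma \ref{lemma3} is also the right call and matches the paper: that lemma concerns the exact solution of the continuous ODE \eqref{gq1eq}, whereas the proposition is purely about the discrete collocation system. The only cosmetic remark is that ``$q_1$ and all of its derivatives are $\bO(w^{-1})$'' should be read for fixed $n$; since the $\phi_j$ are fixed $w$-independent functions this is immediate from $c_j=\bO(w^{-1})$, and it is precisely the sense of ``slowly oscillatory'' intended.
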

\begin{proof}
The proof is trivial with the use of Cramer's rule following  \cite{OLVER2006} and  \cite{XIANGBIT2007}.
\end{proof}

With a little effort, this collocation is readily generalized, including confluent collocation nodes. Assuming that each collocation node $x_j$ is 
{\color{black}
accompanied by multiplicity $m_j\geq 1$} such that $\sum_{j=0}^{n}m_j-1=M$, we require not only the equivalence of the values of $f_1$ 
and $\bW[c_0,q_1]$ at the collocation nodes, but also the values of the derivatives of $f_1$ 
and $\bW[c_0,q_1]$, up to the given multiplicity. In place of $q_1=\sum_{j=1}^{n} c_j\phi_j$ and \eqref{interp1}, we pursue a function $q_1=\sum_{j=1}^{M} c_j\phi_j$ 
{\color{black}
that satisfies the linear system}
\begin{equation}\label{interp2}
\begin{split} 
\frac{d^{j}\bW[c_0,q_1]}{dx^j}(x_l)=f_1^{(j)}(x_l), j=0,1,\ldots,m_l-1, l=0,1,\ldots,n.
\end{split}
\end{equation}
Note that the preceding system \eqref{interp1} is a special case with $M=n$ and all multiplicities equal 1. 

Having obtained the solution of \eqref{interp2}  together with the formula \eqref{hsol}, we define the new Levin method
\begin{equation}\label{levin1}
 Q_{w,\alpha,n}^{L,s}[f]\equiv \left.\left[g^{\alpha+1}(x)q_1(x)+c_0(1-e^{-iwg(x)}) g^\alpha(x)+h(x)\right]e^{iwg(x)} \right|_0^a,
\end{equation}
where $s=\min(m_0,m_n)-1$.

In the following, we  consider the asymptotic order of the new Levin method. To this end, we recall a lemma concluded from the results in \cite{XAINGGUO2014} and \cite{GAO2016}.
\begin{lemma}\label{xlemma}
Suppose $f\in C^{s+1}[0,a]$,  $f^{(j)}(0)=f^{(j)}(a)=0, j=0,1,\ldots,s$ and every function in the set $\{f,f',\ldots,f^{(s+1)}\}$ is of asymptotic order $\bO(1), w\rightarrow \infty$, then 
\begin{eqnarray*}
  \int_0^a x^\alpha f(x) e^{iwx}dx  &\sim & \bO\left(w^{-s-1-\min\{1+\alpha,1\}}\right), \\
 \int_0^a x^\alpha \ln (x) f(x) e^{iwx}dx &\sim & \bO\left(\delta_\alpha(w) w^{-s-1-\min\{1+\alpha,1\}} \right),
\end{eqnarray*}
where
\begin{equation}\label{delta1}
\delta_{\alpha}(w):=\begin{cases} 1+|\ln(w)|, -1<\alpha\leq 0,\\ 1, \alpha>0.\end{cases}
\end{equation}
\end{lemma}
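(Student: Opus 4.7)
The plan is to establish both estimates by invoking an Erd\'elyi-type asymptotic expansion and then killing the leading terms with the given vanishing conditions on $f$. The expansion splits the integral into a contribution from the singular endpoint $x=0$ (responsible for fractional powers $w^{-\alpha-k-1}$) and a contribution from the smooth endpoint $x=a$ (responsible for the integer powers $w^{-k-1}$). The hypothesis $f^{(j)}(0)=f^{(j)}(a)=0$ for $0\le j\le s$ then annihilates the first $s+1$ coefficients of each series.

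For the first integral I would start from the expansion
\[
\int_0^a x^\alpha f(x) e^{iwx}\, dx \sim \sum_{k=0}^\infty \frac{\Gamma(\alpha+k+1)}{k!}\frac{f^{(k)}(0)}{(-iw)^{\alpha+k+1}} - e^{iwa}\sum_{k=0}^\infty \frac{1}{(-iw)^{k+1}}\frac{d^k}{dx^k}[x^\alpha f(x)]\Big|_{x=a},
\]
which is obtained by repeated integration by parts as in \cite{GAO2016}. The condition at $x=0$ kills the first $s+1$ terms of the first series, leaving an $\bO(w^{-\alpha-s-2})$ remainder. For the second series, a Leibniz expansion of $(x^\alpha f(x))^{(k)}$ shows that every summand contains some factor $f^{(k-j)}(a)$ with $k-j\le s$ whenever $k\le s$, so all $s+1$ terms vanish at $x=a$ and the series contributes $\bO(w^{-s-2})$. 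Adding the two bounds gives $\bO(w^{-\alpha-s-2})+\bO(w^{-s-2})=\bO(w^{-s-1-\min\{1+\alpha,1\}})$, as claimed.

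For the logarithmic integral the same decomposition is used, except that the singular moments $\int_0^\infty x^{\alpha+k}\ln(x)e^{iwx}\,dx$ pick up an additional factor of $\ln(-iw)-\psi(\alpha+k+1)$, obtained by differentiating $(-iw)^{-\alpha-k-1}\Gamma(\alpha+k+1)$ with respect to $\alpha$; its modulus is dominated by $1+|\ln w|$. The regular-endpoint series is unchanged because $\ln(x)$ is analytic at $x=a$. Applying the vanishing-condition argument once more, the $x=0$ contribution becomes $\bO((1+|\ln w|)w^{-\alpha-s-2})$ and the $x=a$ contribution remains $\bO(w^{-s-2})$. For $\alpha>0$ the smooth-endpoint term dominates, so $\delta_\alpha(w)=1$; for $-1<\alpha\le 0$ the singular-endpoint term dominates and the log factor survives, giving $\delta_\alpha(w)=1+|\ln w|$.

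The main obstacle is making the asymptotic expansions rigorous with $w$-uniform remainder estimates, and in particular handling the log case uniformly as $\alpha\to 0$, where precise control of $\partial_\alpha(-iw)^{-\alpha-1}$ and the associated incomplete gamma derivatives is required. Both ingredients are already furnished by the integration-by-parts analysis of \cite{GAO2016} and the modified-moment analysis of \cite{XAINGGUO2014}, from which the lemma is then a direct consequence.
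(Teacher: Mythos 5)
Your sketch is correct: the endpoint bookkeeping (the $x=0$ series contributing $\bO(w^{-\alpha-s-2})$ after the vanishing conditions kill its first $s+1$ terms, the $x=a$ series contributing $\bO(w^{-s-2})$ via the Leibniz argument, and the extra $1+|\ln w|$ factor arising from differentiating the singular moments with respect to $\alpha$) reproduces exactly the stated orders, including the case split defining $\delta_\alpha$. The paper offers no proof of this lemma --- it is stated as ``concluded from the results in \cite{XAINGGUO2014} and \cite{GAO2016}'' --- and your Erd\'elyi-type integration-by-parts expansion is precisely the mechanism underlying those cited results, so your route coincides with the paper's, deferring the same $w$-uniform remainder estimates to the same sources.
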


 \begin{theorem}\label{theorem3}
Suppose that  $g(0)=0$ and $g'(x)>0, x\in[0,a]$ and $m_0=m_n=s+1$. If the basis $\{\phi_j\}_{j=1}^{M}$ is a Chebyshev set where $M=\sum_{j=0}^{n}m_j-1$, then for sufficiently large $w$ the system \eqref{interp2} has a unique solution and
\begin{equation}
I_w^{[0,a]}[f,s_1,g]-Q_{w,\alpha,n}^{L,s}[f]\sim \bO(w^{-s-1-\min\{1+\alpha,1\}}).
\end{equation}
\end{theorem}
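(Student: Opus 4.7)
The plan is to prove the theorem in three stages: solvability of the Hermite system \eqref{interp2}, derivation of an exact remainder formula, and an application of Lemma \ref{xlemma} after a change of variables. For solvability I would extend Proposition 2.2 directly. The coefficient matrix of \eqref{interp2} has the form $\BA+iw\BB$ with $\BA,\BB$ independent of $w$; after grouping rows by collocation node and derivative order, the matrix $\BB$ reduces to the generalized Hermite interpolation matrix generated by the Chebyshev set $\{\phi_j\}_{j=1}^{M}$, and is therefore non-singular. Hence $\BA+iw\BB$ is non-singular for large $w$, and Cramer's rule yields $\tilde c_0,\tilde c_j=\bO(w^{-1})$ in the spirit of \cite{OLVER2006,XIANGBIT2007}.

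For the remainder formula, I would substitute $p=qg^\alpha+h$ with $q=gq_1+c_0(1-e^{-iwg})$ into the Levin equation $p'+iwg'p=fx^\alpha$. Using that $h$ is the explicit solution \eqref{hsol} of \eqref{gheq}, a direct computation in which the $\alpha c_0 g'(1-e^{-iwg})g^{\alpha-1}$ terms cancel collapses everything to the clean identity
\begin{equation*}
\bigl(p(x)e^{iwg(x)}\bigr)'=g^\alpha(x)\,\bW[c_0,q_1](x)\,e^{iwg(x)}.
\end{equation*}
Replacing the exact pair by the collocation pair $(\tilde c_0,\tilde q_1)$ from \eqref{interp2} and integrating gives $Q_{w,\alpha,n}^{L,s}[f]=\int_0^a g^\alpha(x)\bW[\tilde c_0,\tilde q_1](x)e^{iwg(x)}dx$. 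Because $f(x)x^\alpha=f_1(x)g^\alpha(x)$ by \eqref{f1}, subtracting from $I_w^{[0,a]}[f,s_1,g]$ produces
\begin{equation*}
I_w^{[0,a]}[f,s_1,g]-Q_{w,\alpha,n}^{L,s}[f]=\int_0^a r(x)\,g^\alpha(x)\,e^{iwg(x)}dx,\qquad r:=f_1-\bW[\tilde c_0,\tilde q_1].
\end{equation*}

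The Hermite conditions in \eqref{interp2} force $r^{(j)}(0)=r^{(j)}(a)=0$ for $j=0,\ldots,s$. Applying the bijective change of variable $t=g(x)$, valid since $g'>0$, converts the remainder into $\int_0^{g(a)}F(t)\,t^\alpha e^{iwt}dt$ with $F(t):=r(g^{-1}(t))/g'(g^{-1}(t))$, and $F$ inherits the boundary vanishing orders of $r$ at $0$ and $g(a)$ because $g,g'$ are smooth with $g'>0$. Lemma \ref{xlemma} then delivers exactly the claimed bound $\bO(w^{-s-1-\min\{1+\alpha,1\}})$, provided $F$ and its first $s+1$ derivatives are uniformly $\bO(1)$ in $w$.

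The delicate step is precisely this last uniformity, which is the analogue for the collocation solution of the non-oscillation property that Lemma \ref{lemma3} establishes for the exact pair $(c_0,q_1)$. I would bootstrap from the $\bO(w^{-1})$ bounds of the first paragraph: inside $\bW[\tilde c_0,\tilde q_1]=iwg'\tilde c_0+g\tilde q_1'+(1+\alpha+iwg)g'\tilde q_1$ the $w$-large combinations $iw\tilde c_0$ and $iwg\tilde q_1$ must remain $\bO(1)$, and the analogous estimates on higher derivatives of $\tilde q_1$ follow by differentiating \eqref{interp2} and re-running the Cramer's-rule cofactor estimates, exploiting the smoothness of $f_1$ and $g$ already used in Lemma \ref{lemma3}. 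Once that uniformity is secured the previous paragraph closes the proof.
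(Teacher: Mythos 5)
Your proposal is correct and follows essentially the same route as the paper: represent $Q_{w,\alpha,n}^{L,s}[f]$ via the fundamental theorem of calculus to get the remainder $\int_0^a(\bW[c_0,q_1]-f_1)g^\alpha e^{iwg}\,dx$, establish solvability and the $\bO(1)$ bounds on $\bW[c_0,q_1]$ and its derivatives through the non-singularity of $\BB$ and Cramer's rule, and conclude with Lemma \ref{xlemma}. Your explicit change of variables $t=g(x)$ (needed because Lemma \ref{xlemma} is stated for the linear oscillator) is a detail the paper leaves implicit, and your worry about differentiating the collocation system is unnecessary — once the coefficients are $\bO(w^{-1})$, all derivatives of $\tilde q_1=\sum\tilde c_j\phi_j$ are automatically $\bO(w^{-1})$ since the $\phi_j$ are fixed and $w$-independent.
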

\begin{proof}
It is known from the fundamental theorem of calculus that
\[\begin{split}
  Q_{w,\alpha,n}^{L,s}[f] & =\int_0^a \bL\left[g^{\alpha+1}q_1+c_0(1-e^{-iwg})g^\alpha+h\right](x) e^{iwg(x)} dx \\
  &=\int_0^a \bW[c_0,q_1](x)g^\alpha(x)e^{iwg(x)}dx,
  \end{split}
\]
where the expression \eqref{gheq} for $\bL[h]$ has been used in the computation. It  follows that 
\[
I_w^{[0,a]}[f,s_1,g]-Q_{w,\alpha,n}^{L,s}[f]=\int_0^a \left( \bW[c_0,q_1](x)-f_1(x)\right)g^\alpha(x)e^{iwg(x)}dx,
\]
on the face of which we are almost done using Lemma \ref{xlemma}. 

To bridge the final gap, we only must show that $\bW[c_0,q_1]^{(j)}$ is $\bO(1), j=0,1,\ldots,s+1$, which is similar to the proof of Theorem 4.1 in \cite{OLVER2006} or of Theorem 3.5 in \cite{ISERLES2018B}. 

{\color{black}
Note that the linear system \eqref{interp2} can be written in the vector form $(\BA+iw\BB)\Bc=\Bf_1$}, where $\BA$ and $\BB$ are independent of $w$. 
{\color{black}
For sufficiently large $w$, $\det \BB \neq 0$ is sufficient to show} the unique existence of \eqref{interp2} and the boundedness of  $\bW[c_0,q_1]^{(j)}$. 
{\color{black}
Hence, all we must show is that the matrix $\BB$ is non-singular.}  Since the proof is identical in concept, we just prove for the case with $n=1$, $m_0=m_1=2$ and $x_0=0, x_1=a$. In this case, 
\[
  \BB=\begin{pmatrix}
  g'(0)   & 0 & 0 & 0\\
  g''(0) & \eta_1'(0) & \eta_2'(0) & \eta_3'(0) \\
  g'(a) & \eta_1(a) & \eta_2(a) & \eta_3(a)  \\    
  g''(a) & \eta_1'(a) & \eta_2'(a) & \eta_3'(a) 
  \end{pmatrix},
\]
where $\eta_k(x)=g(x)g'(x)\phi_k(x), k=1,2,3$. 
{\color{black}
Performing several row operations, we derive }
\[
  \det\BB=(g'(0))^3(g(a)g'(a))^2 \begin{vmatrix}
  \phi_1(0) & \phi_2(0) & \phi_3(0)\\
  \phi_1(a) & \phi_2(a) & \phi_3(a)\\
  \phi_1'(a) & \phi_2'(a) & \phi_3'(a)
  \end{vmatrix}.
\]
{\color{black}
The assumption of a Chebyshev set $\{\phi_k\}_{k=1}^3$ assures that the matrix on the right is non-singular. In addition to $g'(x)\neq 0$ and $g(a)\neq 0$,} it follows that $\det\BB\neq 0$. The proof is finished.
\end{proof}

{\color{black}
In addition to the asymptotic order, the precision of the Levin method also relies on the number of collocation nodes.} To show the dependence, we consider a special case of linear oscillator.
 {\color{black}
 We also set the multiplicity of each point to 1.}
Let $E_n(f)=\left|I_w^{[0,a]}[f,s_1,\tau]-Q_{w,\alpha,n}^{L,0}[f]\right|$ denote the absolute error where $\tau(x):=x$.
\begin{theorem}\label{theom1}
If $f$ is suitably smooth and independent of $w$, then the new Levin method collocating on points $\{0= x_0<x_1<\ldots<x_{n}\leq a\}$ satisfies
\begin{equation}
    E_n(f)\leq Cw^{-\min\{1+\alpha,1\}} \frac{\|f^{(n+1)}\|_\infty a^{n+1} }{n!},
\end{equation}
where $C$ is a constant independent of $w$ and $n$.
\end{theorem}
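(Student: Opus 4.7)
The plan is to exploit the fact that for the linear oscillator $g(x)=x$ and the standard polynomial basis $\phi_j(x)=x^{j-1}$, $j=1,\ldots,n$ (or any basis spanning $\mathcal{P}_{n-1}$), the function
\[
\bW[c_0,q_1](x)=iwc_0+xq_1'(x)+(1+\alpha)q_1(x)+iwx\,q_1(x)
\]
is a polynomial of degree at most $n$. Writing out the coefficients in the monomial basis, the linear map $(c_0,c_1,\ldots,c_n)\mapsto \bW[c_0,q_1]$ has an upper bidiagonal matrix with $iw$ on the diagonal, hence determinant $(iw)^{n+1}\ne 0$, so it is a bijection onto $\mathcal{P}_n$. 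Since $f_1=f$ when $g=\tau$, the collocation system \eqref{interp1} forces $\bW[c_0,q_1]$ to coincide with the Lagrange interpolant $P_n f$ of $f$ at the $n+1$ nodes $\{x_j\}_{j=0}^{n}$; in particular $\bW[c_0,q_1]$ is itself independent of $w$.

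Setting $R:=\bW[c_0,q_1]-f=P_n f-f$, the error identity from the proof of Theorem \ref{theorem3} reduces to
\[
E_n(f)=\left|\int_0^a R(x)\,x^\alpha e^{iwx}\,dx\right|,
\]
and $R(0)=0$ because $x_0=0$ is a collocation node. The classical interpolation error bound gives $\|R\|_\infty\le \|f^{(n+1)}\|_\infty a^{n+1}/(n+1)!$. To extract the oscillation-induced factor, I would split $R(x)=\tilde R(x)+\frac{R(a)}{a}x$ so that $\tilde R(0)=\tilde R(a)=0$. The $\tilde R$-contribution is then controlled by Lemma \ref{xlemma} with $s=0$, yielding $O(w^{-1-\min\{1+\alpha,1\}})$. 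The remaining piece is a scalar multiple of $\int_0^a x^{1+\alpha}e^{iwx}\,dx$, which is $O(w^{-1})$ after one integration by parts (the endpoint at $x=0$ contributes nothing since $1+\alpha>0$). Since $w^{-1}\le w^{-\min\{1+\alpha,1\}}$ for $w\ge 1$, both parts combine to give
\[
E_n(f)\le C\,w^{-\min\{1+\alpha,1\}}\frac{\|f^{(n+1)}\|_\infty a^{n+1}}{(n+1)!},
\]
which is slightly stronger than the stated bound (since $n!\le (n+1)!$).

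The main obstacle I anticipate is not any delicate cancellation but the bookkeeping needed to keep the bound uniform in both $w$ and $n$. The pivotal simplification is the observation that $\bW[c_0,q_1]$ equals the $w$-independent Lagrange interpolant, so that $R$, $\tilde R$, and all of their derivatives are $w$-independent and the hypotheses of Lemma \ref{xlemma} are trivially met. A secondary technical point is that when $x_n<a$ one has $R(a)\ne 0$, so Lemma \ref{xlemma} cannot be applied to $R$ directly; the splitting above circumvents this while costing only a multiplicative factor bounded by $\|R\|_\infty$.
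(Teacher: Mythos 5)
Your reduction to the error identity $E_n(f)=\bigl|\int_0^a R(x)x^\alpha e^{iwx}\,dx\bigr|$ with $R=P_nf-f$ the Lagrange interpolation error is correct and matches the paper's starting point (the bidiagonal-matrix argument is a nice explicit justification of what the paper only asserts). The gap lies in how you then extract the factor $w^{-\min\{1+\alpha,1\}}$ with a constant uniform in $n$. Lemma \ref{xlemma} is a purely asymptotic statement: its implied constant depends on the amplitude function, here on $\tilde R$ and hence on $n$, so citing it cannot by itself yield a bound of the form $C\,w^{-\min\{1+\alpha,1\}}\|f^{(n+1)}\|_\infty a^{n+1}/n!$ with $C$ independent of $n$. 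What is actually needed --- and what the paper uses --- is the quantitative van der Corput-type inequality from \cite{XAINGGUO2014},
\[
\Bigl|\int_0^a \eta(x)x^\alpha e^{iwx}\,dx\Bigr|\le C\, w^{-\min\{1+\alpha,1\}}\Bigl(|\eta(a)|+\int_0^a|\eta'(x)|\,dx\Bigr),
\]
which both makes the constant explicit and, because it tolerates $\eta(a)\neq0$, renders your splitting $R=\tilde R+\frac{R(a)}{a}x$ unnecessary.

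More seriously, this inequality (and already the hypotheses of Lemma \ref{xlemma}) involves the \emph{derivative} of the interpolation error, and you never bound it. Control of $\|R\|_\infty$ alone cannot produce any decay in $w$: the oscillatory gain comes from the variation of the amplitude. The paper bounds $\|\eta'\|_\infty$ by applying Rolle's theorem to obtain $n$ zeros $y_j$ of $\eta'$ and then invoking the interpolation-error representation $\eta'(x)=\frac{\eta^{(n+1)}(\xi)}{n!}\prod_{j=0}^{n-1}(x-y_j)$, giving $\|\eta'\|_\infty\le\|f^{(n+1)}\|_\infty a^{n}/n!$. It is this derivative term, multiplied by $a$, that dominates the estimate and produces the $n!$ in the denominator of the stated bound; your claimed improvement to $(n+1)!$ rests on using only $\|R\|_\infty$ and is not justified. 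The remaining ingredients of your argument (the $w$-independence of $\bW[c_0,q_1]$, the integration by parts for the linear correction) are sound.
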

\begin{proof}
It is already known from preceding analysis that
\begin{equation}
    E_n(f)=\left|\int_0^a (f(x)-\bW[c_0,q_1](x))x^\alpha e^{iwx}dx\right|=\left|\int_0^a (f(x)-p(x))x^\alpha e^{iwx}dx\right|,
\end{equation}
where $p$ is the interpolation of $f$ on the nodes $0= x_0<x_1<\ldots<x_{n}\leq a$. 
{\color{black}
To estimate the error,}
let $\eta(x):=f(x)-p(x)$. It is obvious that $\eta(x_j)=0, j=0,1,\ldots,n$. According to Rolle's theorem, there exists $y_j\in (x_j, x_{j+1})$ such that
\[
\eta'(y_j)=0,\; j=0,1,\ldots,n-1.
\]
Using the expression for interpolation errors, we derive
\[
\eta(x)=\frac{\eta^{(n+1)}(\xi_1)}{(n+1)!}\prod_{j=0}^{n} (x-x_j), \;\; \eta'(x)=\frac{\eta^{(n+1)}(\xi_1)}{n!}\prod_{j=0}^{n-1} (x-y_j),
\]
where $\xi_1,\xi_2\in[0,a]$ depending on the value of $x$.
By the van der Corput-type lemma in \cite{XAINGGUO2014}, there exists a constant $C$ independent of $w$ and $n$ such that
\begin{equation}\label{eq15}
\begin{split}
E_n(f)&\leq Cw^{-\min\{1+\alpha,1\}}\left(|\eta(a)|+ \int_0^a \left|\eta'(x)\right|dx\right)\\
&\leq Cw^{-\min\{1+\alpha,1\}}\left(\|\eta\|_\infty+ a\left\|\eta'\right\|_\infty \right).
\end{split}
\end{equation}
{\color{black}
The desired inequality follows directly from the fact} that $p^{(n+1)}\equiv 0$ and $\eta^{(n+1)}=f^{(n+1)}$.
\end{proof}

Note that the dependence on the number of nodes is related to the interpolation errors of the function $f$ and its derivative. Once the function $f$ is analytic within an ellipse and collocation points are chosen to be the Chebyshev points, the proposed method possesses the superalgebraic convergence 
{\color{black}
since the corresponding interpolation errors decrease supralgebraically} \cite{XIANG2010chebyshev}. 
{\color{black}
Thus, the new Levin method requires a small number of nodes to attain machine precision} that is also uniformly efficient for small $w$.

In computation of highly oscillatory integrals without singularity, it is known that the Filon-type method is equivalent to the Levin method once we use a proper basis \cite{OLVER2006,ISERLES2018B,XIANG2007b}. This conclusion is readily generalized to the highly oscillatory integrals with algebraic singularity.
Assuming that $g'\neq 0$ in $[0,a]$, we define two sets
\[
  \Psi_M=\{g',g'g,g'g^2,\ldots,g'g^{M-1}\}
\]
and
\[
  \Phi_M=\{1,g,g^2,\ldots,g^{M-1}\}.
\]
Note that when $g$ is strictly monotone, $\Psi_N$ and $\Phi_N$ are both Chebyshev sets. Let $\varphi_k=g'g^{k-1}, k=1,2,\ldots,$. Suppose that $p(x)=\sum_{j=1}^{M+1}p_j\varphi_j$ where $M=\sum_{j=0}^nm_j-1$ is the solution to the linear system
\begin{equation}\label{interpf}
p^{(j)}(x_l)=f_1^{(j)}(x_l), j=0,1,\ldots,m_l-1, l=0,1,\ldots,n,
\end{equation}
where $f_1$ is defined in \eqref{f1}.
A new Filon-type method for $I_w^{[0,a]}[f,s_1,g]$ is defined by
\begin{equation}\label{filon1}
Q_{w,\alpha,n}^{F,s}[f]\equiv\int_0^ap(x)g^\alpha(x)e^{iwg(x)}dx=\sum_{j=1}^{M+1}p_j\mu_j,
\end{equation}
where $\mu_j$ are the generalized moments defined by
\[
  \mu_j=\int_0^a\varphi_j(x)g^\alpha(x)e^{iwg(x)}dx.
\]
They can be evaluated fast by a recurrence relation,
\[
  \mu_{j+1}=-\frac{j+\alpha}{iw}\mu_j+\frac{1}{iw} g^{j+\alpha}(a)e^{iwg(a)}, j=1,2,\ldots,
\]
and 
\[
  \mu_1=\frac{g^{1+\alpha}(a)}{(-iwg(a))^{1+\alpha}}\left[\Gamma(1+\alpha)-\Gamma(1+\alpha,-iwg(a))\right].
\]

\begin{theorem}\label{theorem2}
The Filon-type method \eqref{filon1} based on the basis set $\Psi_{M+1}$ is identical to the Levin method \eqref{levin1} using the basis set $\Phi_M$.
\end{theorem}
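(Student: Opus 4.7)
The plan is to show that the function being integrated against $g^\alpha(x)e^{iwg(x)}$ in the Levin method coincides, as an element of $\Span \Psi_{M+1}$, with the interpolant $p$ used in the Filon method.

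First I would recall from the derivation of $Q_{w,\alpha,n}^{L,s}[f]$ in the proof of Theorem \ref{theorem3} that
\[
Q_{w,\alpha,n}^{L,s}[f]=\int_0^a \bW[c_0,q_1](x)\,g^{\alpha}(x)\,e^{iwg(x)}\,dx,
\]
while by construction
\[
Q_{w,\alpha,n}^{F,s}[f]=\int_0^a p(x)\,g^{\alpha}(x)\,e^{iwg(x)}\,dx.
\]
Thus, it suffices to verify that $\bW[c_0,q_1]=p$ on $[0,a]$ whenever $q_1=\sum_{j=1}^{M}c_j g^{j-1}$ is expanded in $\Phi_M$ and $p=\sum_{k=1}^{M+1}p_k g'g^{k-1}$ is expanded in $\Psi_{M+1}$.

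Next I would substitute $q_1=\sum_{j=1}^{M}c_j g^{j-1}$ into the operator $\bW$ and collect terms in the basis $\Psi_{M+1}$. A direct computation shows that
\[
\bW[c_0,q_1](x)=iw c_0\,g'(x)+\sum_{j=1}^{M}(j+\alpha)c_j\,g'(x)g^{j-1}(x)+iw\sum_{j=1}^{M}c_j\,g'(x)g^{j}(x),
\]
so that, with $\varphi_k=g'g^{k-1}$, we have $\bW[c_0,q_1]=\sum_{k=1}^{M+1}p_k\varphi_k$ where
\[
p_1=iw c_0+(1+\alpha)c_1,\quad p_k=iw c_{k-1}+(k+\alpha)c_k\ (2\le k\le M),\quad p_{M+1}=iw c_M.
\]
The linear map $(c_0,c_1,\dots,c_M)\mapsto(p_1,\dots,p_{M+1})$ is represented by an upper bidiagonal matrix whose diagonal entries are all equal to $iw$, and hence its determinant is $(iw)^{M+1}\ne 0$ for $w\ne 0$. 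In particular, this map is a bijection between the coefficient vectors of elements of $\Span\Phi_M$ (extended by the scalar $c_0$) and the coefficient vectors of $\Span\Psi_{M+1}$.

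Finally I would invoke the Hermite interpolation conditions. The Levin system \eqref{interp2} requires $\bW[c_0,q_1]^{(j)}(x_l)=f_1^{(j)}(x_l)$ for $j=0,\dots,m_l-1$ and $l=0,\dots,n$, which, in view of the expansion above, is exactly the same linear system as \eqref{interpf} imposed on the coefficients $p_k$. Since $\Psi_{M+1}$ is a Chebyshev set (by monotonicity of $g$) the Hermite interpolation problem in $\Span\Psi_{M+1}$ has a unique solution for sufficiently large $w$, and by the bijection of the previous paragraph the Levin system has the same unique solvability. Consequently the two interpolants coincide pointwise, $\bW[c_0,q_1]\equiv p$, and the two quadratures are identical. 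The only part requiring care is the bookkeeping of the basis change together with the extra degree of freedom $c_0$; once the bidiagonal structure is written out, everything else reduces to uniqueness of Hermite interpolation on a Chebyshev system.
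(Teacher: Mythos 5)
Your proof is correct and follows essentially the same route as the paper: both arguments reduce to observing that $\bW[c_0,q_1]$ and the interpolant $p$ lie in $\Span\Psi_{M+1}$ and satisfy the same Hermite interpolation conditions \eqref{interp2}, so they coincide by uniqueness. The paper simply asserts the membership of $\bW[c_0,q_1]$ in $\Span\Psi_{M+1}$ as trivial, whereas you make it explicit through the bidiagonal change of coefficients with determinant $(iw)^{M+1}$; this is a useful elaboration (it also shows the two linear systems are equivalent for any $w\neq 0$) but not a different method.
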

\begin{proof}
It is trivial that the interpolant $p$ in the Filon-type method and the function $\bW[c_0,q_1]$ in the Levin method are in the same space:  they both belong to $\text{span}\{\Psi_{M+1}\}$. 
Moreover, both $p$ and $\bW[c_0,q_1]$ obey the Hermite interpolation conditions \eqref{interp2}.  According to the uniqueness of the Hermite interpolation, we derive that $p=\bW[c_0,q_1]$. The equivalence of these two methods follows directly. 
\end{proof}

\begin{remark}
As Olver pointed out in \cite{OLVER2010,OLVER2010b}, 
{\color{black}
how to construct the Filon-type method in a numerically stable manner with the basis set $\Psi_{M+1}$ is still unknown, as is how to choose the interpolation points to optimize the order of convergence.} However, the new Levin method can be implemented by the polynomial interpolation at Chebyshev points, which can ensure the convergence. Compared to the Filon-type method, the Levin method is more stable and accurate for the case of nonlinear oscillators. This is a merit of the Levin method.
\end{remark}

\section{New Levin method for \texorpdfstring{$I_w^{[0,a]}[f,s_2,g]$}{I2}}

{\color{black}
We now  further consider a new Levin method} for the case of oscillatory integrals with both algebraic and logarithmic singularities.

Before we commence the development of the new Levin method, it behooves us to decompose the integral $I_w^{[0,a]}[f,s_2,g]$:
\begin{equation}\label{eq1}
\begin{split}
I_w^{[0,a]}[f,s_2,g] & =\int_0^a f_1(x)g^{\alpha}(x)\log g(x)e^{iwg(x)}dx+\int_0^a f_2(x)x^{\alpha}e^{iwg(x)}dx
 \\
&\triangleq I_1(f_1)+I_2(f_2),
\end{split}
\end{equation}
where $f_1$ is defined in \eqref{f1}
and
\begin{equation}\label{f2}
f_2(x)=\begin{cases}f(x)  \log \frac{x}{g(x)}, \;x\neq0, \\
f(0)\log \frac{1}{g'(0)}, \;x=0.\end{cases}
\end{equation}
{\color{black}
It is obvious that $I_2(f_2)=I_w^{[0,a]}[f_2,s_1,g]$, which is readily computed by $Q_{w,\alpha,n}^{L,s}[f]$. All we need to do is to evaluate the integral $I_1(f_1)$.}

To compute $I_1(f_1)$, the spirit of the classic Levin method requires the solution of the ODE:
\begin{equation}\label{levinode2}
p'(x)+iwg'(x)p(x)=f_1(x)g^\alpha(x)\log(g(x)), x\in(0,a].
\end{equation}
{\color{black}
It is not wise to solve the ODE directly due to the singularity on the right-hand side. To deal with this obstacle, we combine the techniques described in the preceding section and in \cite{WangXiang2018} to seek a particular solution of a form with its singularity explicitly represented: }
\[
p(x)=q(x)g^{\alpha}(x) \log g(x)+\ell(x)g^{\alpha}(x)+h(x),
\]
where $q,\ell$ and $h$ are unknown functions. Substituting the form of $p$ in \eqref{levinode2}, we derive
\begin{equation*}
\begin{split}
 & \left(q'(x)+iwg'(x)q(x)+\alpha g'(x)\frac{q(x)}{g(x)}\right) g^{\alpha}(x)\log g(x) \\
&\quad\quad\quad  +\left(\ell'(x)+iwg'(x)\ell(x)+\alpha g'(x)\frac{\ell(x)}{g(x)}+g'(x)\frac{q(x)}{g(x)} \right) g^{\alpha}(x) \\
&\quad\quad\quad\quad\quad\quad  +h'(x)+iwg'(x)h(x)=f_1(x)g^{\alpha}(x)\log g(x).
\end{split}
\end{equation*}
By the superposition principle, we then split the above ODE under the criterion of singularity:
\begin{equation}\label{gqeq2}
 q'(x)+iwg'(x)q(x)+\alpha g'(x)\frac{q(x)-c_0(1-e^{-iwg(x)})}{g(x)}=f_1(x),
\end{equation}
\begin{equation}\label{gleq2}
\begin{split}
& \ell'(x)+iwg'(x)\ell(x)+\alpha g'(x)\frac{\ell(x)-c_1(1-e^{-iwg(x)})}{g(x)}
+g'(x)\frac{q(x)-c_0(1-e^{-iwg(x)})}{g(x)}=0,
 \end{split}
\end{equation}
\begin{equation}\label{gheq2}
\begin{split}
   & h'(x)+iwg'(x)h(x)+\alpha c_0 g'(x)\frac{1-e^{-iwg(x)}}{(g(x))^{1-\alpha}}\log g(x)
   \\
    & \quad\quad\quad\quad\quad
    +\alpha c_1 g'(x)\frac{1-e^{-iwg(x)}}{(g(x))^{1-\alpha}}+c_0g'(x) \frac{1-e^{-iwg(x)}}{(g(x))^{1-\alpha}}=0.
    \end{split}
\end{equation}

Setting
\[
\begin{split}
q_1(x)=\frac{q(x)-c_0(1-e^{-iwg(x)})}{g(x)}
\;\;\text{and}\; \ell_1(x)=\frac{\ell(x)-c_1(1-e^{-iwg(x)})}{g(x)},
\end{split}
\]
{\color{black}
Eqs. \eqref{gqeq2} and \eqref{gleq2} are simplified as}
\begin{eqnarray}
% \nonumber to remove numbering (before each equation)
  iwg'(x)c_0+g(x)q_1'(x)+[1+\alpha+iwg(x)]g'(x)q_1(x) &=& f_1(x), \label{gq1eq2}\\
  iwg'(x)d_0+g(x)\ell_1'(x)+[1+\alpha+iwg(x)]g'(x)\ell_1(x) &=& -q_1(x)g'(x) \label{gl1eq2},
\end{eqnarray}
{\color{black}
which have exactly the same form of the ODE \eqref{gq1eq}. This means that both \eqref{gq1eq2} and \eqref{gl1eq2} possess at least one well-regularized and non-oscillatory solution that can be solved efficiently by collocation methods. Regarding Eq. \eqref{gheq2},} there is an explicit solution subject to the initial condition $h(0)=0$:
\begin{equation}\label{ghsol22}
\begin{split}
    h(x)&=e^{-iwg(x)}\left(\int_0^x \alpha c_0 g'(t)\frac{1-e^{iwg(t)}}{g^{1-\alpha}(t)}\log g(t) +\alpha c_1 g'(t)\frac{1-e^{iwg(t)}}{g^{1-\alpha}(t)}+c_0g'(t) \frac{1-e^{iwg(t)}}{g^{1-\alpha}(t)}dt\right)\\
    &=\left( c_0\log g(x) + c_1+\frac{c_0}{\alpha}\right)  \left(g^{\alpha}(x)+\frac{\alpha \Gamma(\alpha,-iwg(x))-\Gamma(\alpha+1)} {(-iw)^\alpha} \right)e^{-iwg(x)} \\
    &\quad\quad\quad\quad\quad\quad\quad\quad\quad\quad +\frac{c_0}{\alpha}g^{\alpha}(x)(_2F_2(\alpha,\alpha;1+\alpha,1+\alpha;iwg(x))-1)e^{-iwg(x)},
    \end{split}
\end{equation}
where $_mF_n$ is the generalized hypergeometric function, defined as a power series,
\begin{equation}\label{fdef}
_mF_n(a_1,\ldots,a_m;b_1,\ldots,b_n;z):=\sum_{k=0}^{+\infty} \frac{(a_1)_k\ldots(a_m)_k}{(b_1)_k\ldots(b_n)_k} \frac{z^k}{k!},
\end{equation}
and 
{\color{black}
$(b)_k$ is known as a Pochhammer symbol,} i.e., $(b)_0=1$ and $(b)_k=\frac{\Gamma(k+b)}{\Gamma(b)}$ for $k\geq 1$. 

Let $\{\phi_j\}_{j=1}^{M}$ be a basis of functions independent of $w$ and $\{x_j\}_{j=0}^{n}$ be a set of collocation nodes accompanied with multiplicity $m_j\geq 1$ such that $0=x_0<x_1<\ldots<x_{n}=a$ and $\sum_{j=0}^{n}m_j-1=M$. We are seeking two functions, $q_1=\sum_{j=1}^{M} c_j\phi_j$ and $\ell_1= \sum_{j=1}^{M} d_j\phi_j$, and two numbers, $c_0$ and $d_0$ such that they obey two linear systems, respectively,
\begin{equation}\label{interp3}
\begin{split} 
\frac{d^{j}\bW[c_0,q_1]}{dx^j}(x_l) & =f_1^{(j)}(x_l),\quad\qquad j=0,1,\ldots,m_l-1, l=0,1,\ldots,n. \\
\frac{d^{j}\bW[d_0,\ell_1]}{dx^j}(x_l) & =(-q_1g')^{(j)}(x_l), j=0,1,\ldots,m_l-1, l=0,1,\ldots,n. 
\end{split}
\end{equation}
We define the new Levin method for $I_w^{[0,a]}[f,s_2,g]$ as 
\begin{equation}\label{levin2}
\begin{split}
Q_{w,n}^{L,s}[f]\equiv Q_{w,\alpha,n}^{L,s}[f_2] & +\left[(q_1(x)\log(g(x))+\ell_1(x))g^{1+\alpha}(x)\right. \\ & \left. \left.-(c_0\log(g(x))+d_0)(1-e^{-iwg(x)})g^\alpha(x)+h(x)\right]e^{iwg(x)}\right|_0^a,
\end{split}
\end{equation}
where $h$ is given in \eqref{ghsol22} and $f_2$ is defined in \eqref{f2}.

We next show the asymptotic order of the new Levin method.
\begin{theorem}
Suppose that  $g(0)=0$ and $g'(x)>0, x\in[0,a]$ and $m_0=m_n=s+1$. If the basis $\{\phi_j\}_{j=1}^{M}$ is a Chebyshev set where $M=\sum_{j=0}^{n}m_j-1$, then for sufficiently large $w$ each of the two systems \eqref{interp3} has a unique solution and
\begin{equation}
I_w^{[0,a]}[f,s_2,g]-Q_{w,n}^{L,s}[f]\sim \bO(\delta_\alpha(w)w^{-s-1-\min\{1+\alpha,1\}}).
\end{equation}
\end{theorem}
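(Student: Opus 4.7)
The plan is to parallel the proof of Theorem \ref{theorem3} by exploiting the decomposition \eqref{eq1}. Writing
\begin{equation*}
I_w^{[0,a]}[f,s_2,g] - Q_{w,n}^{L,s}[f] = \bigl(I_2(f_2) - Q_{w,\alpha,n}^{L,s}[f_2]\bigr) + \bigl(I_1(f_1) - R_w\bigr),
\end{equation*}
where $R_w$ denotes the second (boundary) summand on the right-hand side of \eqref{levin2}, the first bracket is controlled immediately by Theorem \ref{theorem3}, since $f_2$ defined in \eqref{f2} inherits the smoothness of $f$ and $g$; its error is $\bO(w^{-s-1-\min\{1+\alpha,1\}})$, which is absorbed into the claimed bound because $\delta_\alpha(w)\geq 1$.

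For the logarithmic bracket I would apply the fundamental theorem of calculus to $R_w$. Setting $P(x) = (q_1\log g + \ell_1)g^{1+\alpha} - (c_0\log g + d_0)(1-e^{-iwg})g^\alpha + h$, one has $R_w = \int_0^a (P'(x) + iwg'(x)P(x))e^{iwg(x)}\,dx$. A direct expansion, together with the ODE \eqref{gheq2} determining $h$ and the substitutions $q = q_1 g + c_0(1-e^{-iwg})$, $\ell = \ell_1 g + d_0(1-e^{-iwg})$, produces after cancellation the compact residual representation
\begin{equation*}
I_1(f_1) - R_w = \int_0^a \bigl(f_1 - \bW[c_0,q_1]\bigr) g^\alpha \log g\, e^{iwg}\,dx - \int_0^a \bigl(q_1 g' + \bW[d_0,\ell_1]\bigr) g^\alpha e^{iwg}\,dx.
\end{equation*}
The Hermite collocation conditions \eqref{interp3} together with $m_0 = m_n = s+1$ ensure that both residuals and their derivatives vanish through order $s$ at $0$ and $a$. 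Performing the change of variable $t = g(x)$ and invoking Lemma \ref{xlemma} on each integral yields bounds of $\bO(\delta_\alpha(w) w^{-s-1-\min\{1+\alpha,1\}})$ and $\bO(w^{-s-1-\min\{1+\alpha,1\}})$, respectively, the maximum of which equals the claimed bound.

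Unique solvability of the two systems in \eqref{interp3} and the $\bO(1)$ boundedness of the derivatives of $\bW[c_0,q_1]$ and $\bW[d_0,\ell_1]$ up to order $s+1$ (required to legitimately invoke Lemma \ref{xlemma}) both rest on non-singularity of the common coefficient matrix $\BB$. Since the two systems share the operator $\bW$ and differ only in their right-hand sides, $\BB$ is exactly the matrix appearing for \eqref{interp2}, and the Chebyshev-set hypothesis delivers $\det\BB \neq 0$ by the row-reduction argument already carried out in the proof of Theorem \ref{theorem3}; Cramer's rule then confirms that the coefficients are $\bO(w^{-1})$. The main obstacle is the algebraic bookkeeping when establishing the compact residual representation: one must carefully track the cross-terms produced by $(Pe^{iwg})'$ and verify that the explicit ODE \eqref{gheq2} for $h$ absorbs precisely the residual contributions carrying $(1-e^{-iwg})g^{\alpha-1}\log g$ and $(1-e^{-iwg})g^{\alpha-1}$, so that only the two clean integrals above survive.
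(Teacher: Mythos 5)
Your proposal is correct and follows essentially the same route as the paper: split off $I_2(f_2)-Q_{w,\alpha,n}^{L,s}[f_2]$ via \eqref{eq1} and Theorem \ref{theorem3}, rewrite the boundary term of \eqref{levin2} as an integral by the fundamental theorem of calculus to obtain exactly the two residual integrals $\int_0^a(f_1-\bW[c_0,q_1])g^\alpha\log g\,e^{iwg}dx$ and $-\int_0^a(\bW[d_0,\ell_1]+q_1g')g^\alpha e^{iwg}dx$, and then invoke Lemma \ref{xlemma} together with the non-singularity of the common collocation matrix $\BB$. The only difference is that you spell out details (the change of variable $t=g(x)$, the absorption of the non-logarithmic term via $\delta_\alpha(w)\ge 1$) that the paper leaves implicit by deferring to the proof of Theorem \ref{theorem3}.
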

\begin{proof} Similar to the proof of Theorem \ref{theorem3}, 
{\color{black}
we commence with the representation of  $Q_{w,n}^{L,s}[f]$ in integral form.}
 Using the fundamental theorem of calculus, we derive
\[\begin{split}
  Q_{w,n}^{L,s}[f]= Q_{w,\alpha,n}^{L,s}[f_2] & + \int_0^a \bL\left[(q_1\log(g)+\ell_1)g^{1+\alpha}\right. \\   &  \left.-(c_0\log(g)+d_0)(1-e^{-iwg})g^\alpha +h \right](x) e^{iwg(x)} dx,\\
  =  Q_{w,\alpha,n}^{L,s}[f_2] & + \int_0^a \bW[c_0,q_1](x)g^\alpha(x)\log(g(x)) e^{iwg(x)} dx \\ 
  & + \int_0^a \left(\bW[d_0,\ell_1](x) +q_1(x)g'(x)\right) g^\alpha(x) e^{iwg(x)} dx.
  \end{split}
  \]
Hence 
\[
\begin{split}
I_w^{[0,a]}[f,s_2,g]-Q_{w,n}^{L,s}[f]= & I_w^{[0,a]}[f_2,s_1,g]-Q_{w,\alpha,n}^{L,s}[f_2] \\
& \quad+ \int_0^a \left(f_1(x) - \bW[c_0,q_1](x)\right)g^\alpha(x)\log(g(x))e^{iwg(x)}dx \\
& \qquad - \int_0^a \left(\bW[d_0,\ell_1](x) +q_1(x)g'(x)\right) g^\alpha(x) e^{iwg(x)} dx.
\end{split}
\]
Finally, we use Lemma 2.3 and Theorem 2.4 in a manner similar to the proof of Theorem 2.4 and the desired results follow.
\end{proof}

Similar to the case of algebraic singularity,  the error bound of the proposed Levin method on the number of nodes closely depends on the interpolation errors of the related
functions. We list the result without proof for a special case of linear oscillator. 
Let $E_n(f):=\left|I_w^{[0,a]}[f,s_2,\tau]-Q_{w,n}^{L,0}[f]\right|$ denote the absolute error where $\tau(x):=x$.
\begin{theorem}
If $f$ is suitably smooth and independent of $w$, then the new Levin method collocating on points $\{0\leq x_0<x_1<\ldots<x_{n}\leq a\}$ satisfies
\begin{equation}
    E_n(f)\leq C\delta_\alpha(w)w^{-\min\{1+\alpha,1\}} \frac{\|f^{(n+1)}\|_\infty a^{n+1} }{n!},
\end{equation}
where $C$ is a constant independent of $w$ and $n$.
\end{theorem}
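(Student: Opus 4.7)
The plan is to mirror the proof of Theorem \ref{theom1}, now incorporating the extra contribution from the logarithmic decomposition. Specialized to $g(x)=x$, the definitions \eqref{f1} and \eqref{f2} give $f_1\equiv f$ and $f_2\equiv 0$, so the term $I_w^{[0,a]}[f_2,s_1,\tau]-Q_{w,\alpha,n}^{L,0}[f_2]$ appearing in the integral representation from the proof of Theorem 3.1 vanishes. Consequently
\[
E_n(f) \le \left|\int_0^a \bigl(f(x)-\bW[c_0,q_1](x)\bigr)\,x^\alpha\log(x)\,e^{iwx}\,dx\right| + \left|\int_0^a \bigl(\bW[d_0,\ell_1](x)+q_1(x)\bigr)\,x^\alpha\,e^{iwx}\,dx\right|.
\]

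First I would invoke the Filon--Levin equivalence of Theorem \ref{theorem2}, extended in the obvious way to the two coupled collocation systems \eqref{interp3}: this identifies $\bW[c_0,q_1]$ with the Lagrange interpolant $p_f$ of $f$ at $\{x_0,\ldots,x_n\}$ and $\bW[d_0,\ell_1]$ with $-p_{q_1}$. Hence $f-\bW[c_0,q_1]=f-p_f$ and $\bW[d_0,\ell_1]+q_1=q_1-p_{q_1}$, and both vanish at every collocation node. The same Rolle--theorem argument used in the proof of Theorem \ref{theom1} then yields, for any suitably smooth $u\in\{f,q_1\}$,
\[
\|u-p_u\|_\infty+a\,\|(u-p_u)'\|_\infty \le C\,\frac{\|u^{(n+1)}\|_\infty a^{n+1}}{n!},
\]
with $C$ independent of $w$ and $n$.

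Next I would apply the van der Corput--type estimate from \cite{XAINGGUO2014} (exactly as in \eqref{eq15}) to each integral. The first integral carries the logarithmic weight and therefore picks up the factor $\delta_\alpha(w)w^{-\min\{1+\alpha,1\}}$ guaranteed by Lemma \ref{xlemma}, combined with the interpolation estimate above for $u=f$; this already produces the advertised bound. The second integral picks up only $w^{-\min\{1+\alpha,1\}}$ together with the interpolation estimate for $u=q_1$.

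The main obstacle is the final step of controlling $\|q_1^{(n+1)}\|_\infty$ by $\|f^{(n+1)}\|_\infty$. Returning to \eqref{gq1eq2} with $g(x)=x$, i.e.\ $iwc_0+xq_1'(x)+(1+\alpha+iwx)q_1(x)=f(x)$, the idea is to differentiate $n+1$ times, isolate the dominant factor $iwx\,q_1^{(n+1)}$ on the left, and apply Lemma \ref{lemma3} (with the regularity index raised accordingly) to the derivative equations to obtain $\|q_1^{(n+1)}\|_\infty \le C\,\|f^{(n+1)}\|_\infty/w$ with $C$ independent of $w$ and $n$. Substituting this estimate shows that the second integral contributes $\bO(w^{-1-\min\{1+\alpha,1\}}\|f^{(n+1)}\|_\infty a^{n+1}/n!)$, which is strictly dominated by the first integral's contribution, and combining the two bounds gives the desired inequality.
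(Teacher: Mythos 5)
The paper itself states this theorem without proof (``We list the result without proof''), the intended argument being the direct analogue of Theorem \ref{theom1}. Your overall skeleton --- specializing to $g=\tau$ so that $f_1\equiv f$, $f_2\equiv 0$, splitting the error into the $\log$-weighted integral of $f-\bW[c_0,q_1]$ and the unweighted integral of $\bW[d_0,\ell_1]+q_1$, identifying $\bW[c_0,q_1]$ with the Lagrange interpolant of $f$, and combining the Rolle argument with the logarithmic van der Corput-type bound to get the factor $\delta_\alpha(w)w^{-\min\{1+\alpha,1\}}$ --- is correct and is exactly what the omitted proof would do for the dominant term.

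The gap is in your treatment of the second integral. You correctly identify $\bW[d_0,\ell_1]$ as the interpolant $-p_{q_1}$ of $-q_1g'=-q_1$, but then you switch to treating $q_1$ as though it were the exact (non-polynomial) solution of \eqref{gq1eq2} and set out to prove $\|q_1^{(n+1)}\|_\infty\leq C\|f^{(n+1)}\|_\infty/w$ with $C$ independent of $n$. That step fails on two counts. First, it conflates two objects: the $q_1$ appearing in the quadrature error is the collocation polynomial $\sum_{j=1}^{n}c_j\phi_j$, of degree at most $n-1$, not the exact ODE solution (if it were the exact solution, the first integral would already vanish since $\bW[c_0,q_1]=f$). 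Second, even for the exact solution, the proposed route of differentiating the ODE $n+1$ times and invoking Lemma \ref{lemma01}/Lemma \ref{lemma3} cannot deliver a constant independent of $n$: the constant in Lemma \ref{lemma01} is $C_2^{k+\alpha-1}/((k+\alpha)C_1^{k+\alpha})$, which grows with the order $k$, and chaining $n+1$ such derivative estimates compounds this growth. The resolution is that no bound on $q_1^{(n+1)}$ is needed at all: since $q_1$ is a polynomial of degree at most $n-1$ and $(d_0,\ell_1)\mapsto\bW[d_0,\ell_1]$ is, for large $w$, a bijection onto the polynomials of degree at most $n$, the interpolation conditions in \eqref{interp3} force $\bW[d_0,\ell_1]=-q_1$ identically, so the second integrand vanishes and $E_n(f)$ reduces to the single $\log$-weighted integral of $f-p_f$, to which your first estimate applies verbatim.
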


Inspired by the new Levin method,  a new moment-free Filon-type method is readily developed for $I_w^{[0,a]}[f,s_2,g]$. 
{\color{black}
We find a function $p(x)=\sum_{j=1}^{M+1}p_j\varphi_j$ where $M=\sum_{j=0}^nm_j-1$ and $\varphi_j=g'g^{j-1},j=1,\ldots,M+1$,} such that
\begin{equation}\label{interpf2}
p^{(j)}(x_l)=f_1^{(j)}(x_l), j=0,1,\ldots,m_l-1, l=0,1,\ldots,n,
\end{equation}
where $f_1$ is defined in \eqref{f1}.
We define a new Filon-type method for $I_w^{[0,a]}[f,s_2,g]$ by
\begin{equation}\label{filon2}
\begin{split}
Q_{w,n}^{F,s}[f] & \equiv  Q_{w,\alpha,n}^{F,s}[f_2]+\int_0^ap(x)g^\alpha(x)\log(g(x))e^{iwg(x)}dx \\
& = Q_{w,\alpha,n}^{F,s}[f_2]+ \sum_{j=1}^{M+1}p_j\nu_j,
\end{split}
\end{equation}
where $\nu_j$ are the generalized moments defined by
\[
  \nu_j=\int_0^a\varphi_j(x)g^\alpha(x)\log(g(x))e^{iwg(x)}dx.
\]
They can be evaluated fast by a recurrence relation,
\[
  \nu_{j+1}=-\frac{j+\alpha}{iw}\nu_j-\frac{1}{iw}\mu_j+\frac{1}{iw} g^{j+\alpha}(a)\log(g(a))e^{iwg(a)}, j=1,2,\ldots,
\]
and 
\[
  \nu_1=\frac{\log(g(a))}{(-iw)^{1+\alpha}}\left[\Gamma(1+\alpha)-\Gamma(1+\alpha,-iwg(a))\right] -\frac{g^{1+\alpha}(a)}{(1+\alpha)^2} {_2}F_2(1+\alpha,1+\alpha;2+\alpha,2+\alpha;iwg(a)).
\]

An identical reasoning of Theorem 2.6 reveals the relation between the Filon-type method and the Levin method.
\begin{theorem}
The Filon-type method \eqref{filon2} based on the basis set $\Psi_{M+1}$ are identical to the Levin method \eqref{levin1} using the basis set $\Phi_M$.
\end{theorem}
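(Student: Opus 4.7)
The plan is to mimic the proof of Theorem 2.6 but with an additional cancellation argument to absorb the second ODE that arises in the logarithmic case. First, split each quadrature into its algebraic contribution (from $I_2(f_2)$) and its logarithmic contribution. The algebraic pieces $Q_{w,\alpha,n}^{L,s}[f_2]$ and $Q_{w,\alpha,n}^{F,s}[f_2]$ coincide immediately by Theorem 2.6 applied with the bases $\Phi_M$ and $\Psi_{M+1}$, so the problem reduces to matching the logarithmic parts. Applying the fundamental theorem of calculus to the boundary expression in \eqref{levin2} together with the relations \eqref{gq1eq2}--\eqref{gheq2} (exactly as in the proof of Theorem 3.2) rewrites the logarithmic component of $Q_{w,n}^{L,s}[f]$ as
\[
\int_0^a \bW[c_0,q_1](x)\, g^\alpha(x)\log g(x)\, e^{iwg(x)}\, dx + \int_0^a \big(\bW[d_0,\ell_1](x) + q_1(x) g'(x)\big) g^\alpha(x)\, e^{iwg(x)}\, dx,
\]
whereas the logarithmic component of $Q_{w,n}^{F,s}[f]$ is simply $\int_0^a p(x) g^\alpha(x)\log g(x) e^{iwg(x)} dx$. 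Equivalence therefore reduces to the two pointwise identities $\bW[c_0,q_1] \equiv p$ and $\bW[d_0,\ell_1] + q_1 g' \equiv 0$.

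The first identity is a carbon copy of the argument in Theorem 2.6. Since $q_1 \in \Span\{\Phi_M\}$, a direct expansion of $\bW[c_0,q_1] = iwg'c_0 + g q_1' + (1+\alpha+iwg)g'q_1$ places it in $\Span\{\Psi_{M+1}\}$; the first row of \eqref{interp3} together with \eqref{interpf2} imposes the same $M+1$ Hermite interpolation conditions against $f_1$ on both $\bW[c_0,q_1]$ and $p$; and uniqueness of Hermite interpolation over the Chebyshev system $\Psi_{M+1}$ forces $\bW[c_0,q_1] = p$ pointwise.

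The second identity is the genuinely new ingredient. Because $q_1 \in \Span\{\Phi_M\} = \Span\{1, g, \ldots, g^{M-1}\}$, the product $q_1 g'$ lies in $\Span\{g', gg', \ldots, g^{M-1}g'\} \subset \Span\{\Psi_{M+1}\}$. On the other hand $\bW[d_0,\ell_1] \in \Span\{\Psi_{M+1}\}$ by the same expansion used in the first step, and the second row of \eqref{interp3} states precisely that $\bW[d_0,\ell_1]$ matches $-q_1 g'$ under exactly the same $M+1$ Hermite conditions. Chebyshev uniqueness applied a second time yields $\bW[d_0,\ell_1] = -q_1 g'$ identically, so the second integral vanishes and the two methods coincide. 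The main obstacle is spotting that $q_1 g'$ already lives in $\Span\{\Psi_{M+1}\}$; once that membership is recognised, the proof of Theorem 2.6 carries through essentially unchanged, modulo the routine computation with $\bL$ needed to unpack the boundary term in \eqref{levin2}.
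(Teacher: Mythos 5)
Your proof is correct and follows exactly the route the paper intends: the paper's own proof of this theorem is the single sentence ``an identical reasoning of Theorem 2.6,'' and your argument is that reasoning spelled out. The one genuinely new ingredient you supply --- the observation that $q_1 g' \in \Span\{\Psi_{M+1}\}$, so that uniqueness of Hermite interpolation in that Chebyshev space forces $\bW[d_0,\ell_1] \equiv -q_1 g'$ and makes the second integral vanish --- is precisely the detail the paper leaves implicit, and it is right.
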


\section{Collocation method for \texorpdfstring{\eqref{gq1eq}}{(2.5)}}
{\color{black}
As seen in Sections 2 and 3,} new Levin methods depend on numerical solution of the kind ODE \eqref{gq1eq}. The ODE can be  solved  either in the frequency space (i.e., to obtain coefficients of the basis functions) or in the physical space (i.e., to obtain values of function in the collocation points). It is suggested from \eqref{levin1} and \eqref{levin2} that the solution solved in the physical space is more convenient to avoid the recovery process from the expanding expression. 
{\color{black}
Because many integrals of interest appeared in high-frequency scatterings,  only point values of $f$ can be used since $f$ is often very complicated (and may itself be an integral involving special functions);  we therefore propose in this section a new collocation method for \eqref{gq1eq} without any derivative information, i.e., $s=0$.We also note that derivatives might be avoided by allowing  interpolation points close to the critical points  as $w$ increases \cite{ISERLES2004b}.}

There exists a stable collocation method in the physical space to solve the classic Levin ODE for oscillatory integrals without singularities \cite{LI2008}. Unluckily, it is not applicable directly for  \eqref{gq1eq} since there is an extra unknown coefficient $c_0$ to be decided. 
{\color{black}
To circumvent this difficulty,}
 we adopt the Chebyshev-Gauss-Radau points, $t_j=-\cos \frac{2j\pi}{2n-1}, j=0,1,\ldots,n-1$, instead of Chebyshev-Lobatto nodes. We commence by recalling the first-order differentiation matrix $D$ based on the Chebyshev-Gauss-Radau points. 
 {\color{black}
 The matrix is determined by an explicit formula the entries of which are given by} (\cite{SHTW2011}, p. 100)
\begin{equation}\label{Dcgr}
    d_{kj}=
    \begin{cases}
    -\frac{n(n-1)}{3}, k=j=0,\\
    \frac{t_k}{2(1-t_k^2)}+\frac{(2n-1)T_{n-1}(t_k)}{2(1-t_k^2)Q'(t_k)}, \,\, 1\leq k=j\leq n-1, \\
    \frac{Q'(t_k)}{Q'(t_j)}\frac{1}{t_k-t_j}, k\neq j,
    \end{cases}
\end{equation}
where $Q(t)=T_n(t)+T_{n-1}(t), t\in[-1,1]$. 

{\color{black}
Letting $x(t)=\frac{a}{2}\left(1-t\right)$, $t\in[-1,1]$,}
 we select the modified Chebyshev-Gauss-Radau points  $x_j=x(t_{n-j}),j=1,\ldots,n$ and $x_0=0$ as the collocation points. 
 Then, for a given polynomial $u$ of degree less than $n$ on $[0,a]$, there exists the relation
\[
\Bu'=\frac{2}{a}D\Bu,
\]
where $\Bu$ and $\Bu'$ are two vectors of evaluations of functions $u$ and $u'$ at the modified Chebyshev-Gauss-Radau points, respectively.

{\color{black}
To derive a linear system for \eqref{gq1eq}, we must tackle the collocation condition at $x_0$ carefully. 
Since the coefficient of $q_1'(0)$ is $g(x_0)$ and $g(x_0)=0$, there is no need to express $q_1'(0)$ in terms of the values of $q$. 
Owing to the use of Chebyshev-Gauss-Radau points, the value of $q_1(0)$ must be represented by the extrapolation.} Using the interpolant of $q_1$, it is obtained by
\[
q_1(0)=\frac{1}{2n-1}\cos((n-1)\pi) q_1(x_n) +\sum_{j=1}^{n-1} \frac{2}{2n-1} \left(\cos((n-1-j)\pi)\sec \frac{j\pi}{2n-1}\right)  q_1(x_{n-j}),
\]
if $q_1$ is a polynomial of degree no more than $n-1$.

Let $\Bx:=[x_0,x_1,\ldots,x_n]$ and denote by $\Bq_1$ and $\Bf$ the modified vectors of values of functions $q_1$ and $f$, respectively,
i.e.,
\[
\Bq_1=[c_0, q_1(x_1), \ldots, q_1(x_n)],\; \text{and}\; \Bf=[f(x_0), f(x_1),\ldots,f(x_n)].
\]
{\color{black}
Set $\Br$ as a vector of the coefficients of $q_1(0)$ in terms of $q_1(x_j),j=1,2,\ldots,n$ 
 and $\Bc$ as a vector of size $n\times 1$ the entries of which equal $iw$.}
  We assemble a matrix $L$ of size $(n+1)\times (n+1)$ by
\[
L=\begin{pmatrix}iw & (1+\alpha)\Br^\top \\ \Bc & \Lambda_1D+\Lambda_2 \end{pmatrix},
\]
where $\Br^\top$ means the transpose of the vector $\Br$, $\Lambda_1=\diag\left(-\frac{2g(x_1)}{a},-\frac{2g(x_2)}{a},\ldots,-\frac{2g(x_n)}{a}\right)$ and $\Lambda_2=\diag(1+\alpha+iwg(x_1),1+\alpha+iwg(x_2),\ldots,1+\alpha+iwg(x_n)$.

Equation \eqref{gq1eq} is discretized on the collocation points $x_j(j=0,1,\ldots,n)$, and then we obtain the linear system in the vector form
\begin{equation}\label{Aeq2}
    L\Bq_1=\Bf.
\end{equation}
Note that the matrix $L$ is ill-conditioned when the dimension is large. However, as observed in \cite{LI2008}, only the last singular value of $L$ is very small and it is well separated from the rest. 
{\color{black}
Hence the technique of truncated singular value decomposition (TSVD)} is suggested to be used when the last singular value is smaller than $10^{-8}$ to obtain a stable solution.

\section{Numerical Examples}

In this section, we illustrate the convergence characteristics of proposed new Levin methods with a number of numerical experiments. 
We also compare the computational performance of the new collocation method with that of the composite moment-free Filon-type quadrature (CMFP) proposed in \cite{MAXU2017}. 
{\color{black}
The numerical results presented below were all obtained using MatLab (MathWorks, USA) on a laptop with an Intel(R) Core(TM) i7-6500U CPU with 8 GB of RAM.}

\begin{example}\label{example1}
 We first consider the integral with algebraic singularity considered in \cite{PIESSENS1992}, 
    \[
    \int_0^1 f(x)x^\alpha e^{-iwx}dx= \frac{1}{2(\alpha+1)}-\frac{\sqrt{\pi}}{2} \left(2/w\right)^{\alpha+1/2}\Gamma(\alpha+1)H_{\alpha+3/2}(w) +\frac{i\sqrt{\pi}}{2}\left(2/w\right)^{\alpha+1/2}\Gamma(\alpha+1) J_{\alpha+3/2}(w),
    \]
    where $f(x)=e^{iw}(1-x)(2-x)^\alpha$, $H_v$ is the Struve function and can be expressed in terms of the generalized hypergeometric function $_1F_2$,
    \[
    H_v(z)=\frac{z^{v+1}}{2^v\sqrt{\pi}\Gamma(v+3/2)}  {_1F_2}\left(1,3/2,v+3/2,-z^2/4 \right).
    \]    
\end{example}

\begin{figure}
\centering
\subfloat[$s=0$,$\alpha=0.5$]{
\label{fig1:subfig:a} 
\includegraphics[width=3.0in]{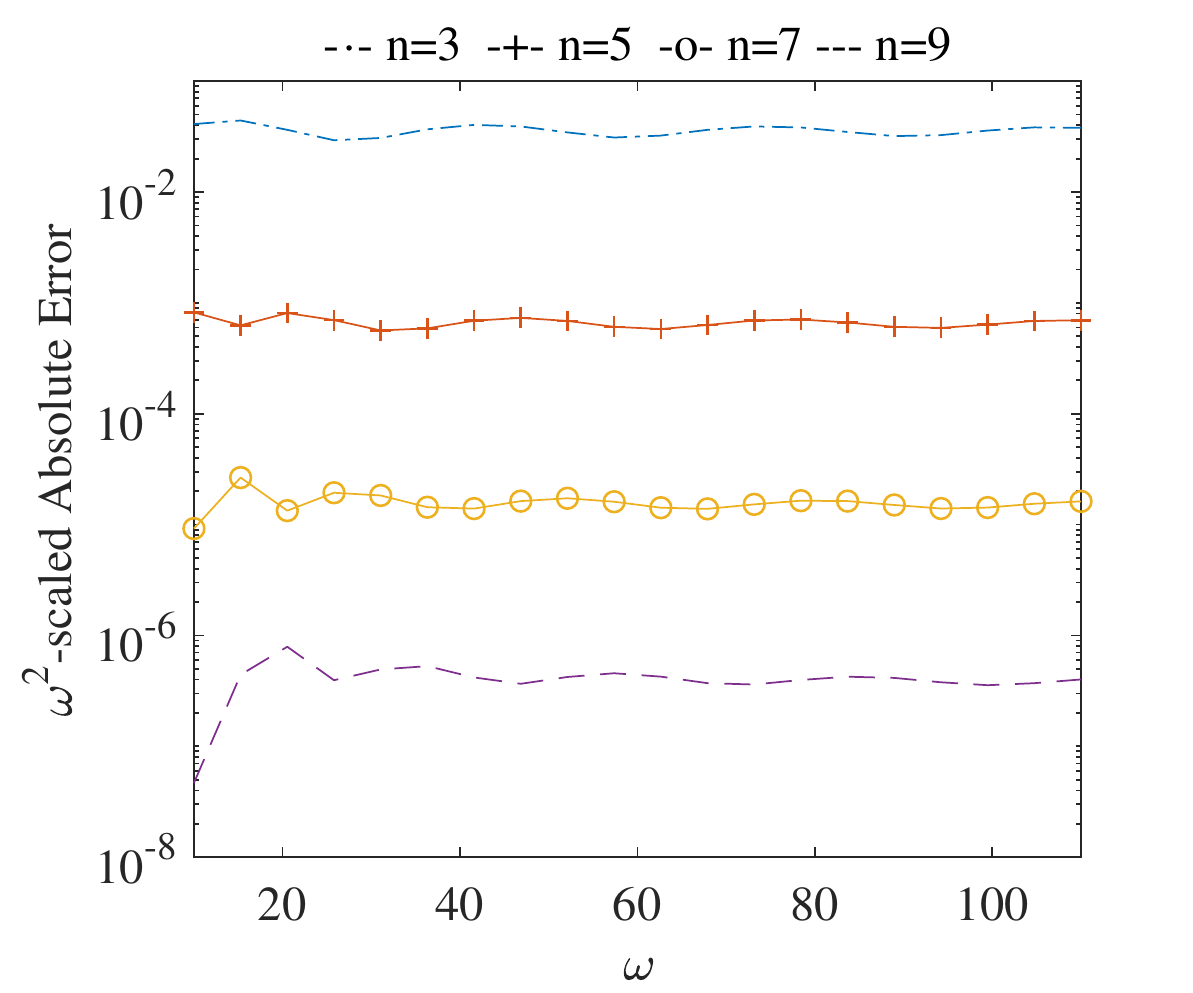}}
\hspace{0.1in}
\subfloat[$s=0$,$\alpha=-0.5$]{
\label{fig1:subfig:b} 
\includegraphics[width=3.0in]{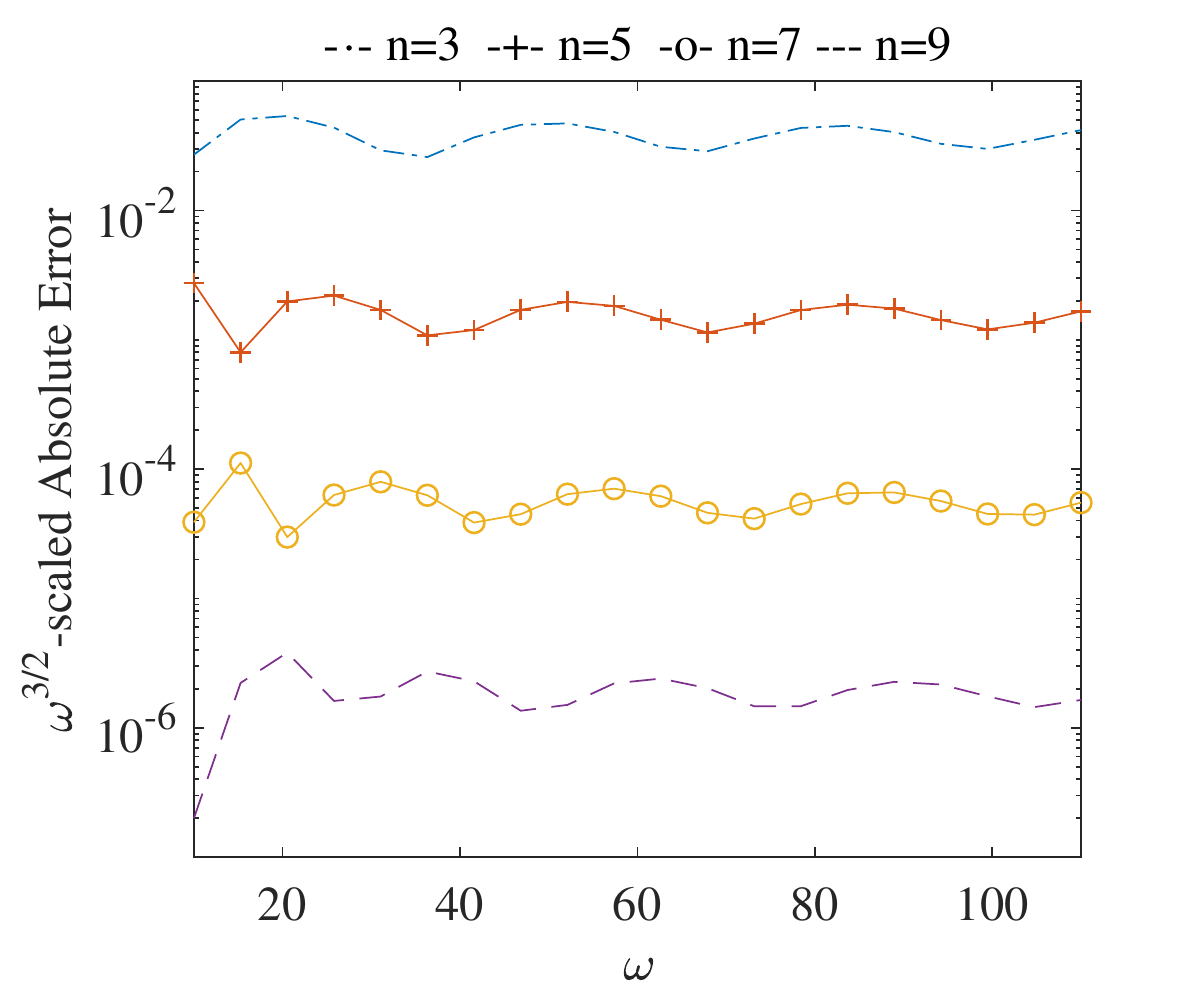}}

\subfloat[$s=1$,$\alpha=0.5$]{
\label{fig1:subfig:c} 
\includegraphics[width=3.0in]{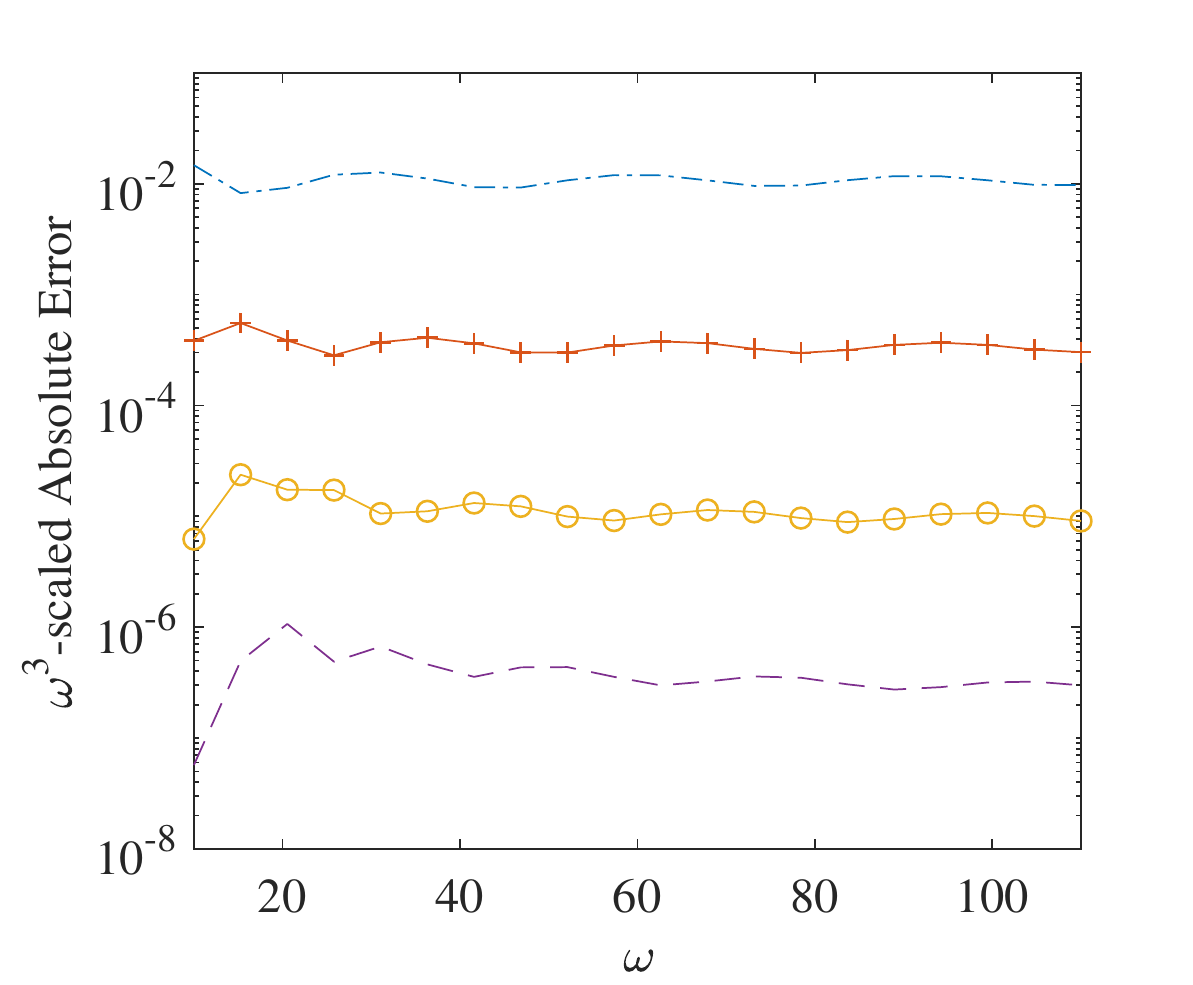}}
\hspace{0.1in}
\subfloat[$s=1$,$\alpha=-0.5$]{
\label{fig1:subfig:d} 
\includegraphics[width=3.0in]{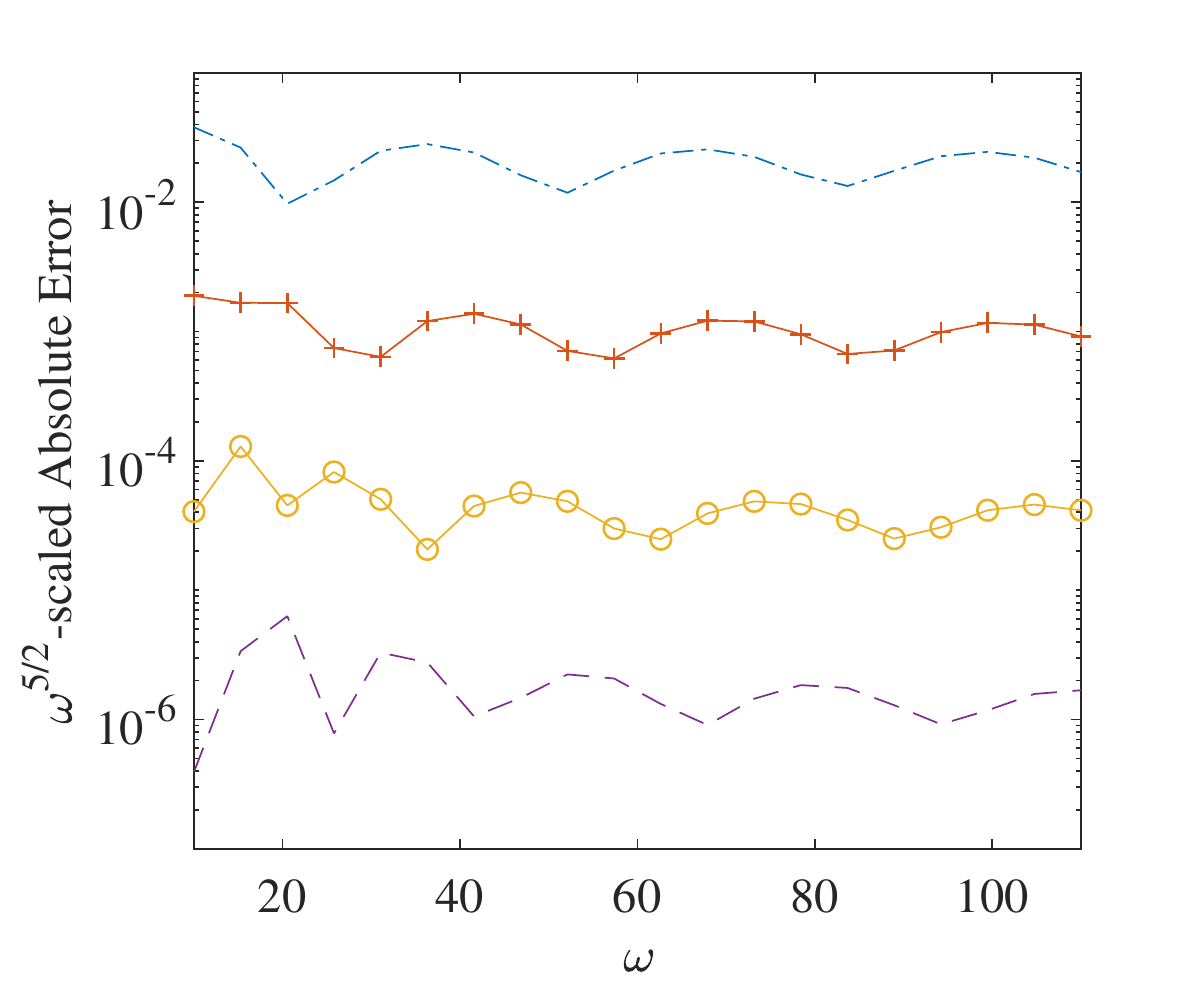}}

\subfloat[$s=2$,$\alpha=0.5$]{
\label{fig1:subfig:e} 
\includegraphics[width=3.0in]{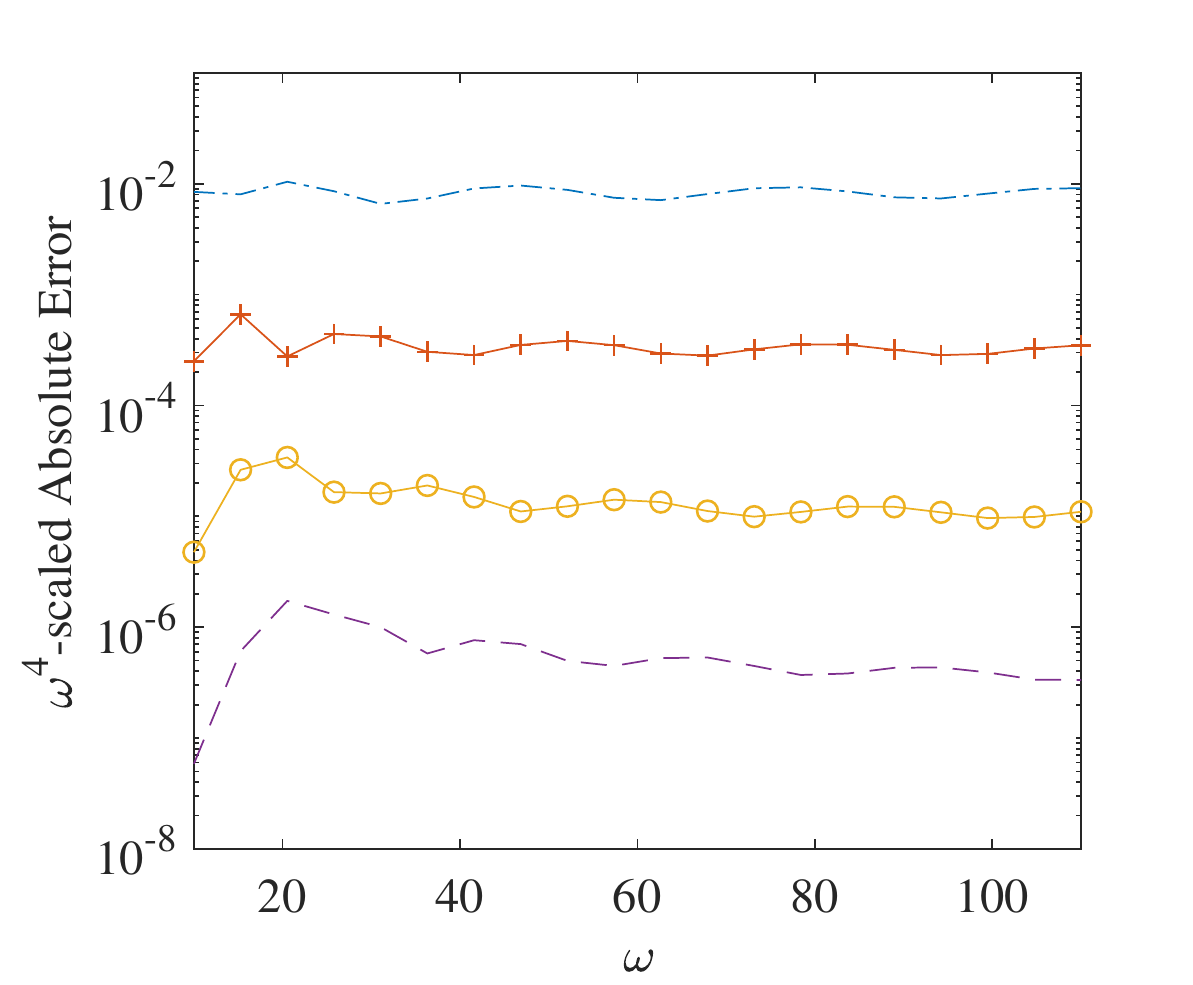}}
\hspace{0.1in}
\subfloat[$s=2$,$\alpha=-0.5$]{
\label{fig1:subfig:f} 
\includegraphics[width=3.0in]{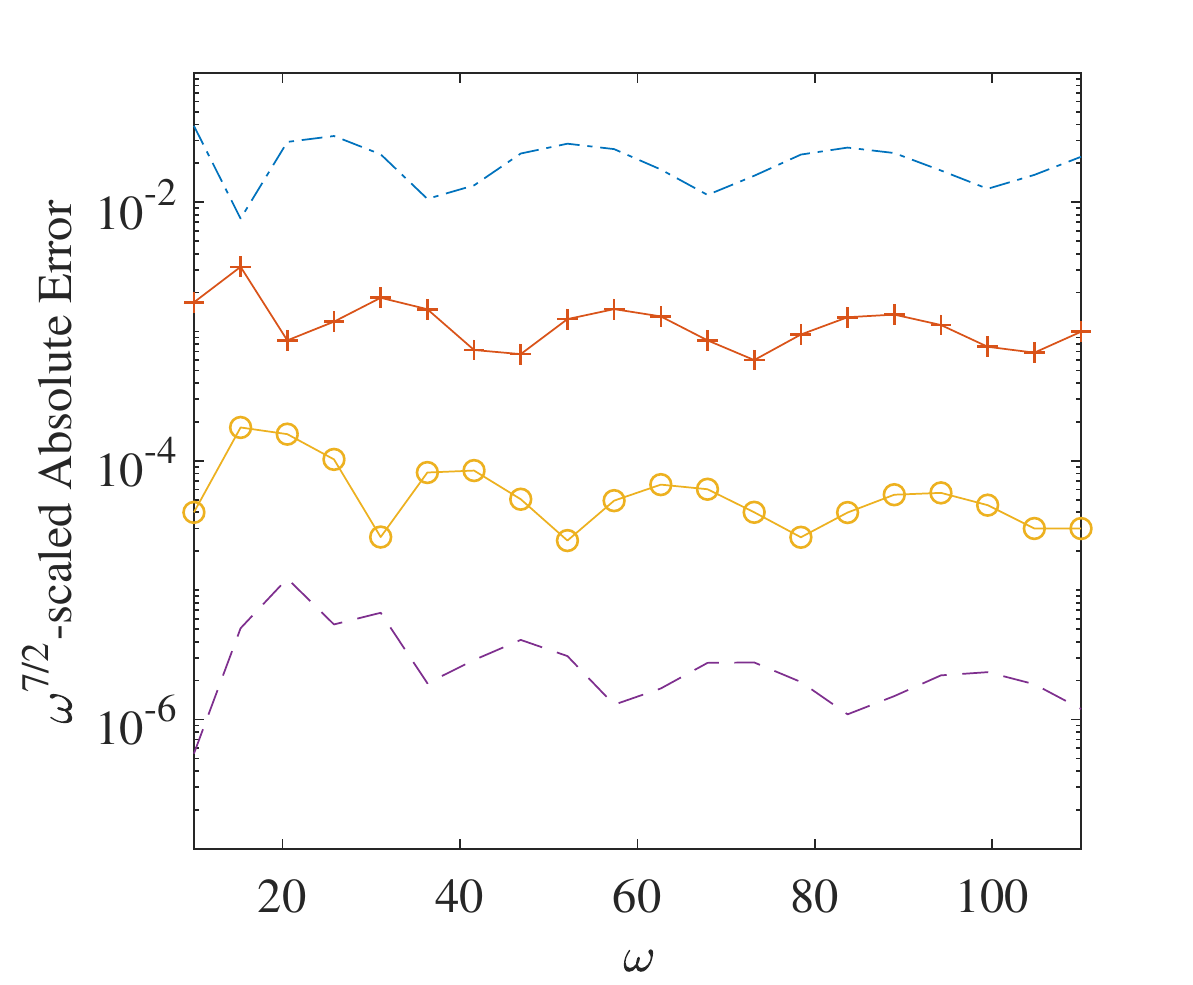}}
\caption{Scaled absolute errors of the new Levin method for the integral in Example 5.1 as a function of increasing $w$. Errors behave asymptotically as $\bO(w^{-s-1-\min\{1+\alpha,1\}})$ for different values of $s$ and $\alpha$.}
\label{fig1} 
\end{figure}

\begin{figure}
\centering
\subfloat[$s=0$,$\alpha=0.5$]{
\label{fig2:subfig:a} 
\includegraphics[width=3.0in]{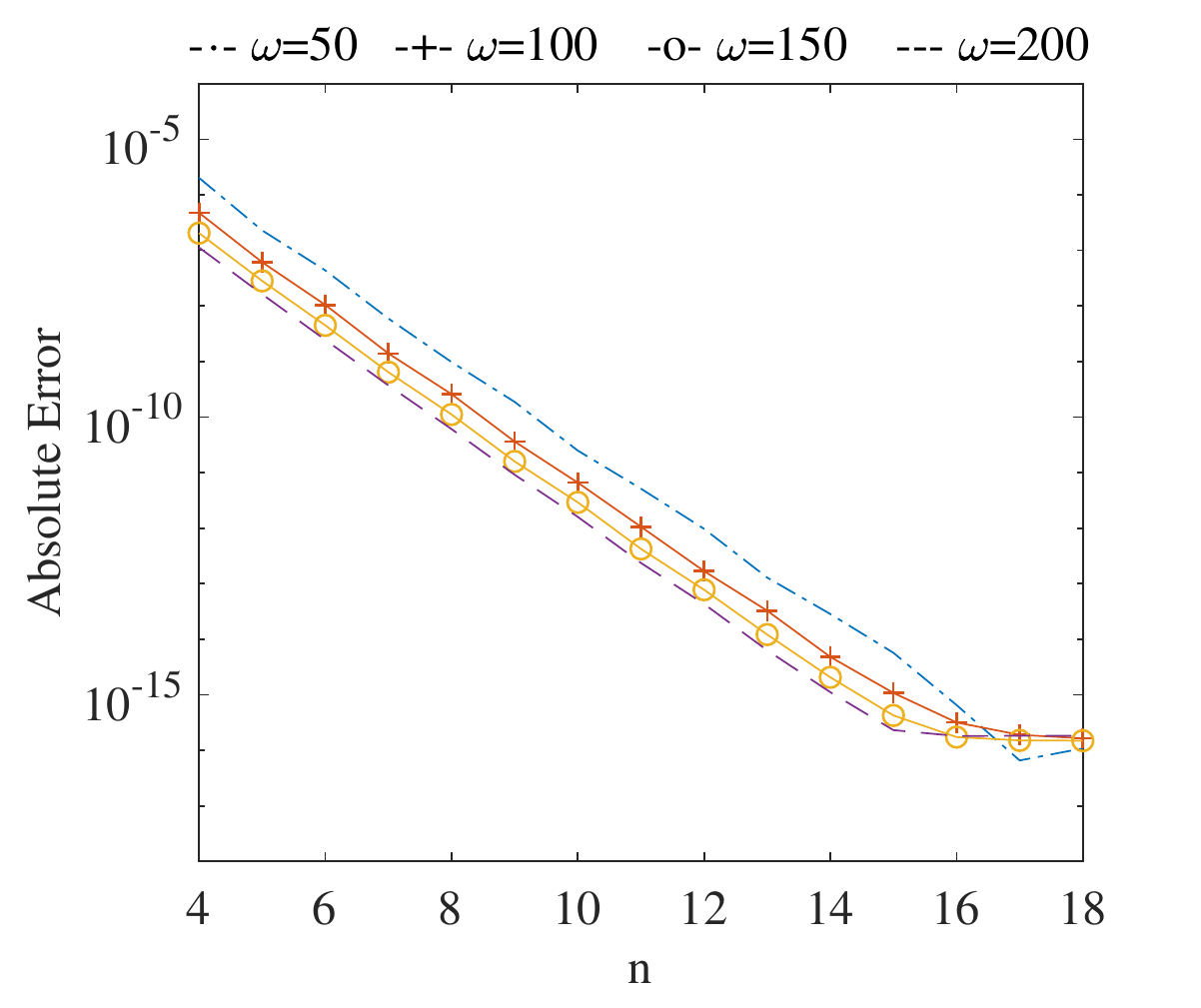}}
\hspace{0.1in}
\subfloat[$s=0$,$\alpha=-0.5$]{
\label{fig2:subfig:b} 
\includegraphics[width=3.0in]{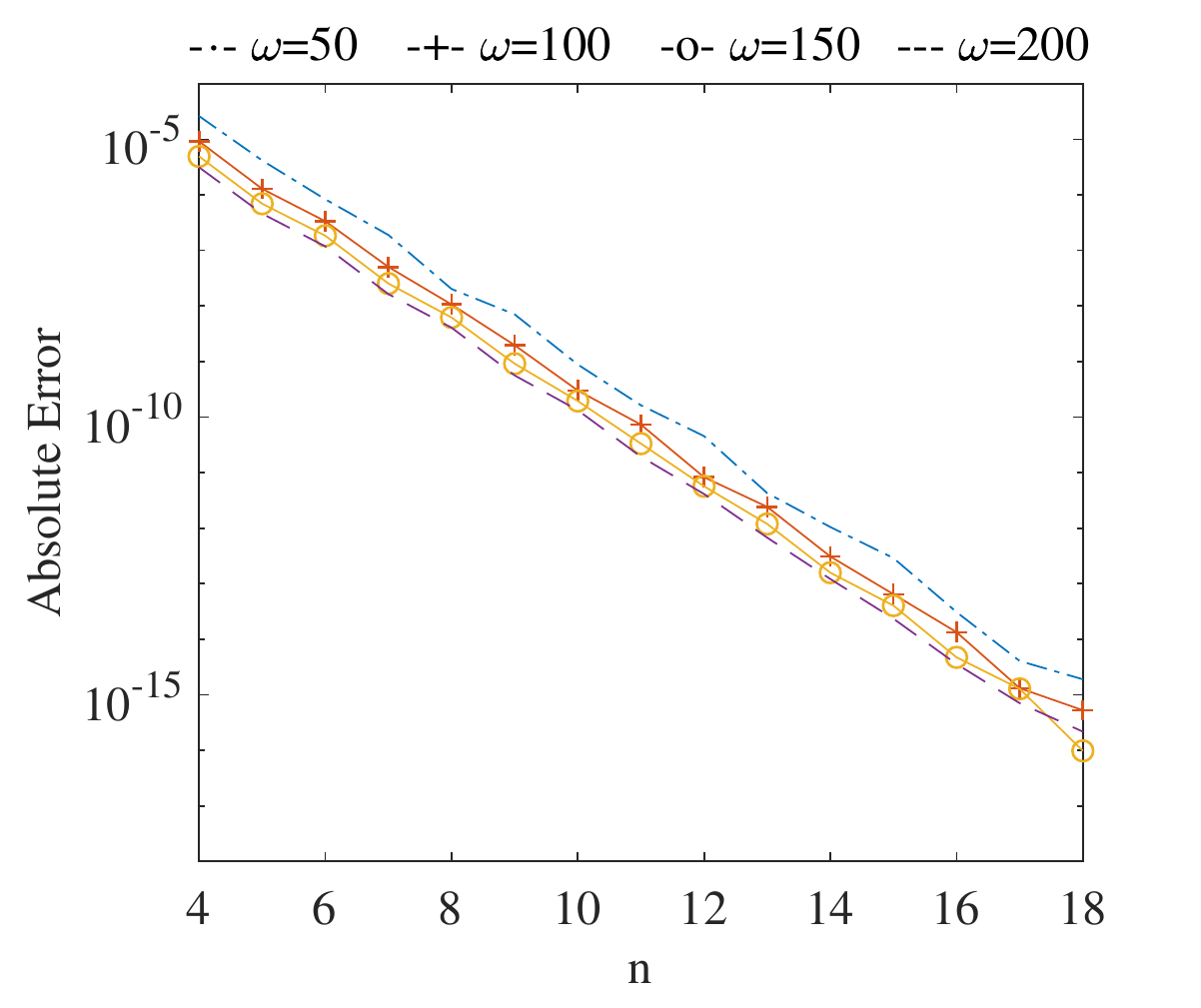}}

\subfloat[$s=1$,$\alpha=0.5$]{
\label{fig2:subfig:c} 
\includegraphics[width=3.0in]{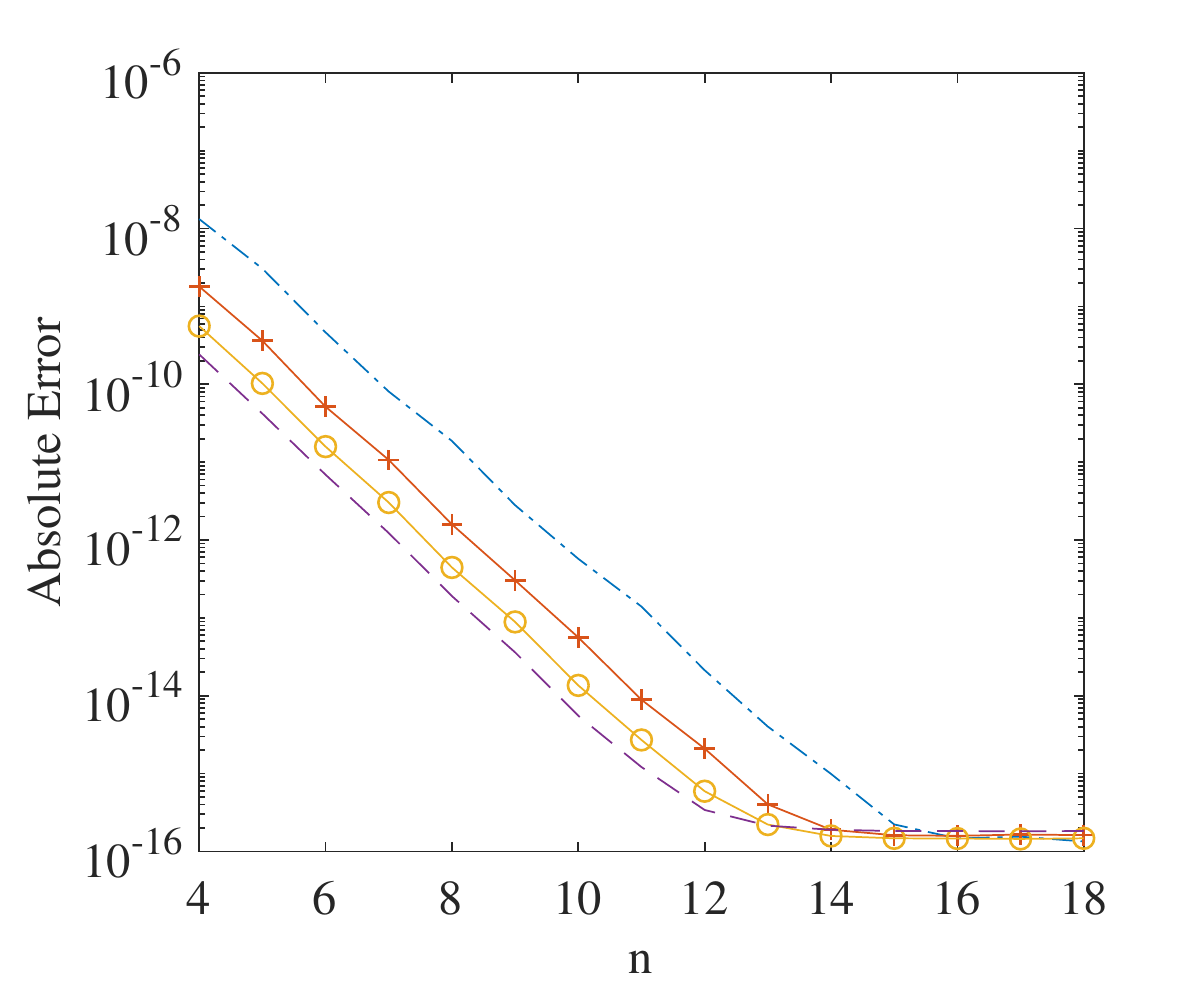}}
\hspace{0.1in}
\subfloat[$s=1$,$\alpha=-0.5$]{
\label{fig2:subfig:d}
\includegraphics[width=3.0in]{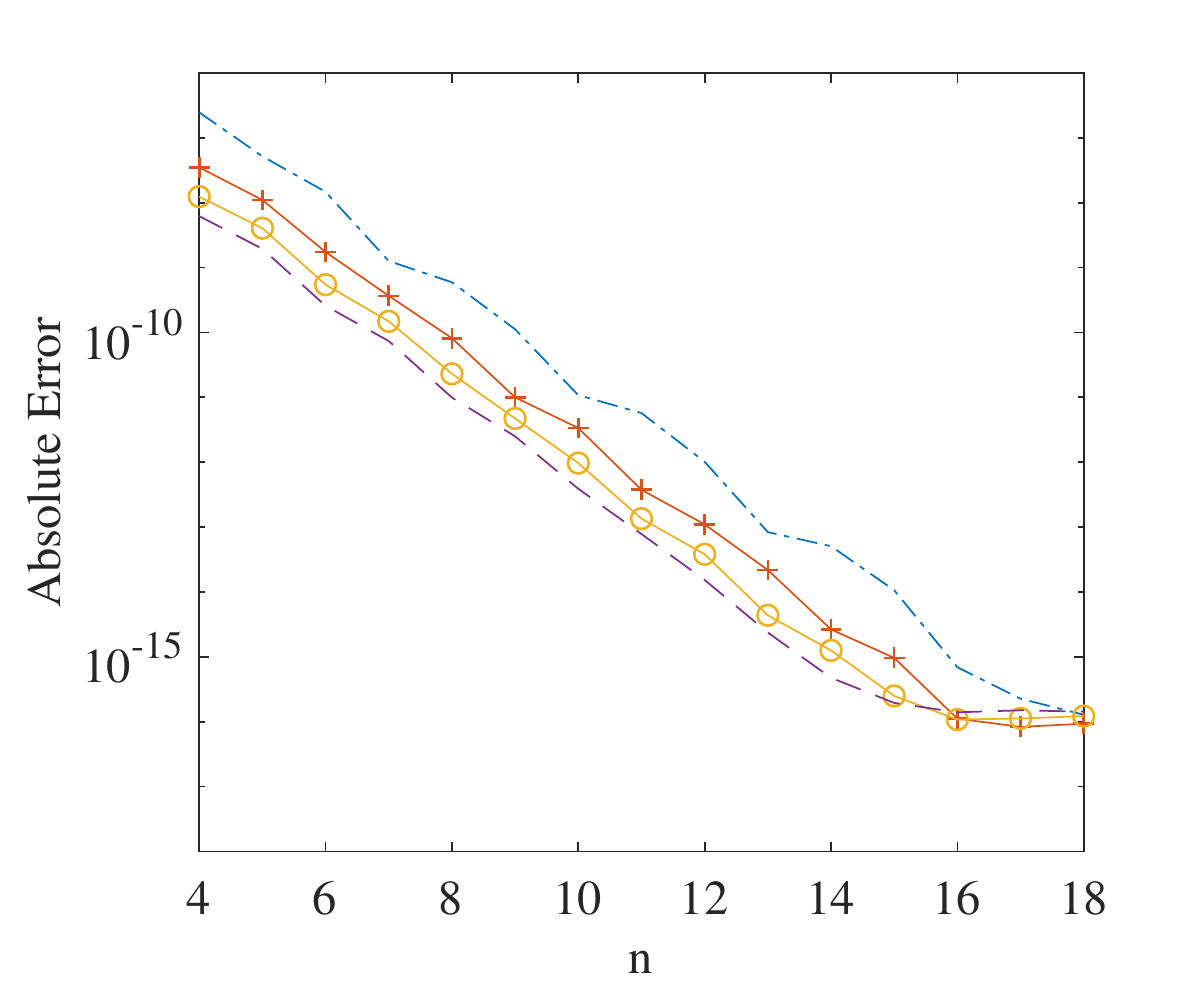}}

\subfloat[$s=2$,$\alpha=0.5$]{
\label{fig2:subfig:e}
\includegraphics[width=3.0in]{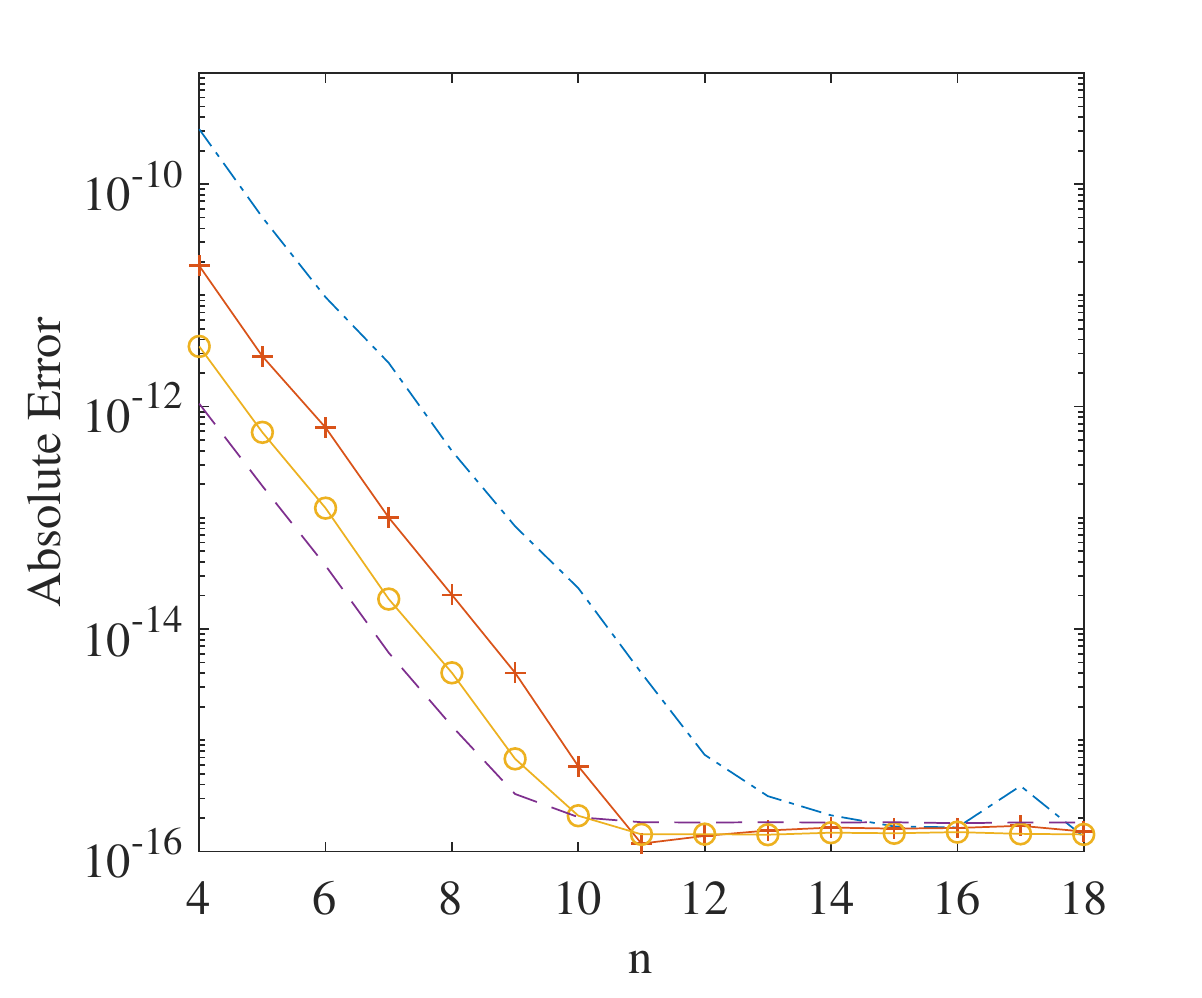}}
\hspace{0.1in}
\subfloat[$s=2$,$\alpha=-0.5$]{
\label{fig2:subfig:f}
\includegraphics[width=3.0in]{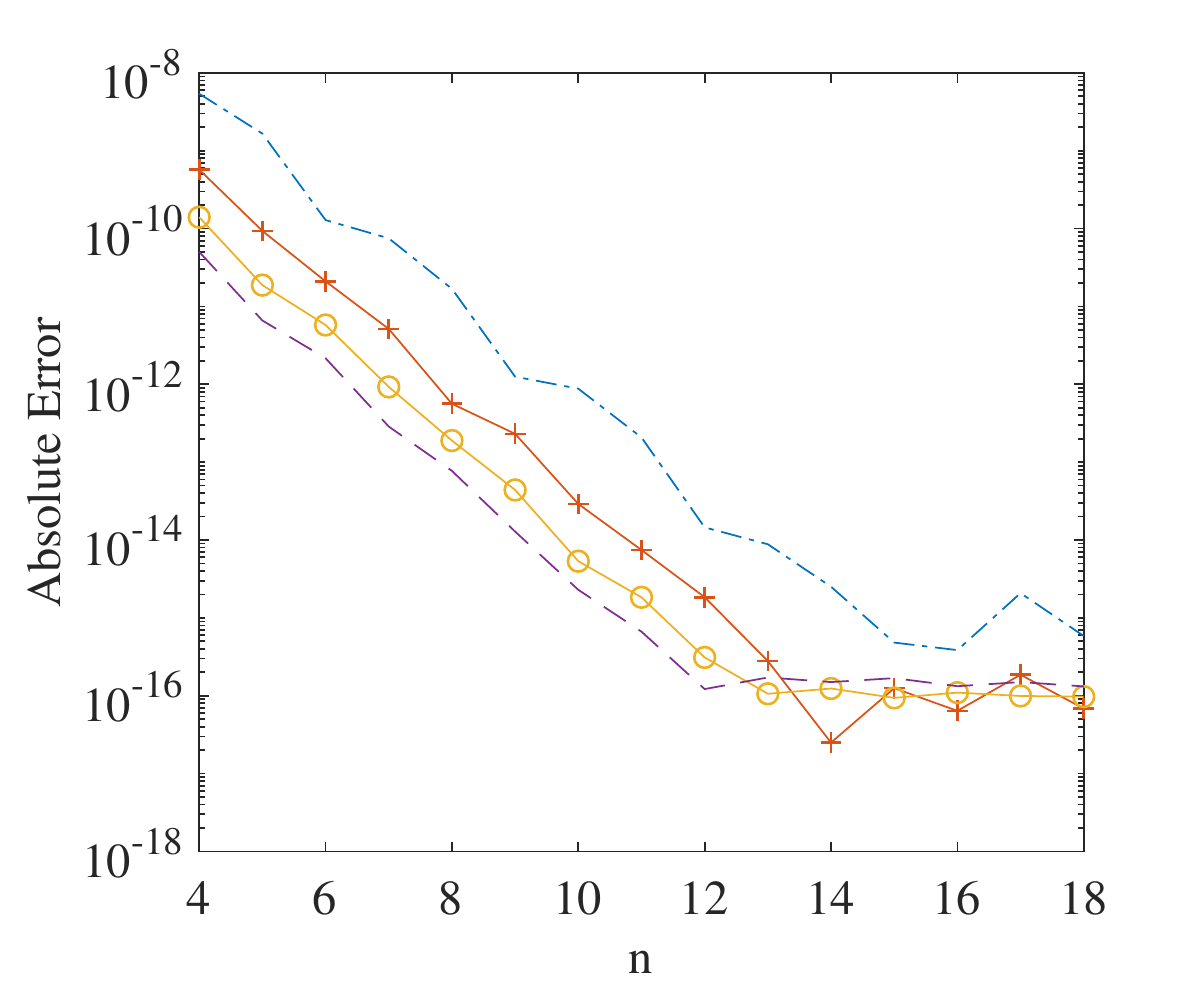}}
\caption{Absolute errors of the new Levin method for the integral in Example 5.1 as a function of increasing number of collocation points $n$. Exponential convergence is observed for different values of $s$ and $\alpha$.}
\label{fig2} 
\end{figure}
The Levin method is implemented based on the modified Chebyhev-Lobatto points. Figures \ref{fig1} and \ref{fig2} show numerical convergence for increasing frequency $w$ and for increasing number of collocation points. In Figure \ref{fig1}, $w^{s+1+\min\{1+\alpha,1\}}$-scaled absolute errors are plotted as a function of $w$. The lines are approximately straight, which confirms the asymptotic decay of the error at the rate of $w^{-1-s-\min\{1+\alpha,1\}}$. In Figure \ref{fig2}, convergence is shown as a function of $n$, the number of collocation points. Exponential convergence is observed, which levels off only when machine precision is reached. Errors decrease as the values of $w$ increase.

\begin{example}
In the second example, we compute the integral with algebraic and logarithmic singularities
\[
  \int_0^1\frac{1}{1+x^2}x^\alpha\log x e^{iwx}dx.
\]
\end{example}

We present in Figures \ref{fig4} and \ref{fig5} the similar results of numerical convergence for increasing frequency $w$ and for increasing number of collocation points . In Figure \ref{fig4} , $w^{s+1+\min\{1+\alpha,1\}}\delta_\alpha^{-1}(w)$-scaled absolute errors are plotted as a function of $w$. 
{\color{black}
The nearly straight lines confirm the asymptotic decay of the error}
 at the rate of $\delta_\alpha(w)w^{-1-s-\min\{1+\alpha,1\}}$. As a function of the number of collocation points, exponential convergence is observed in Figure \ref{fig5}. Errors also decrease as the values of $w$ increase.

\begin{figure}
\centering
\subfloat[$s=0$,$\alpha=0.5$]{
\label{fig4:subfig:a} 
\includegraphics[width=3.0in]{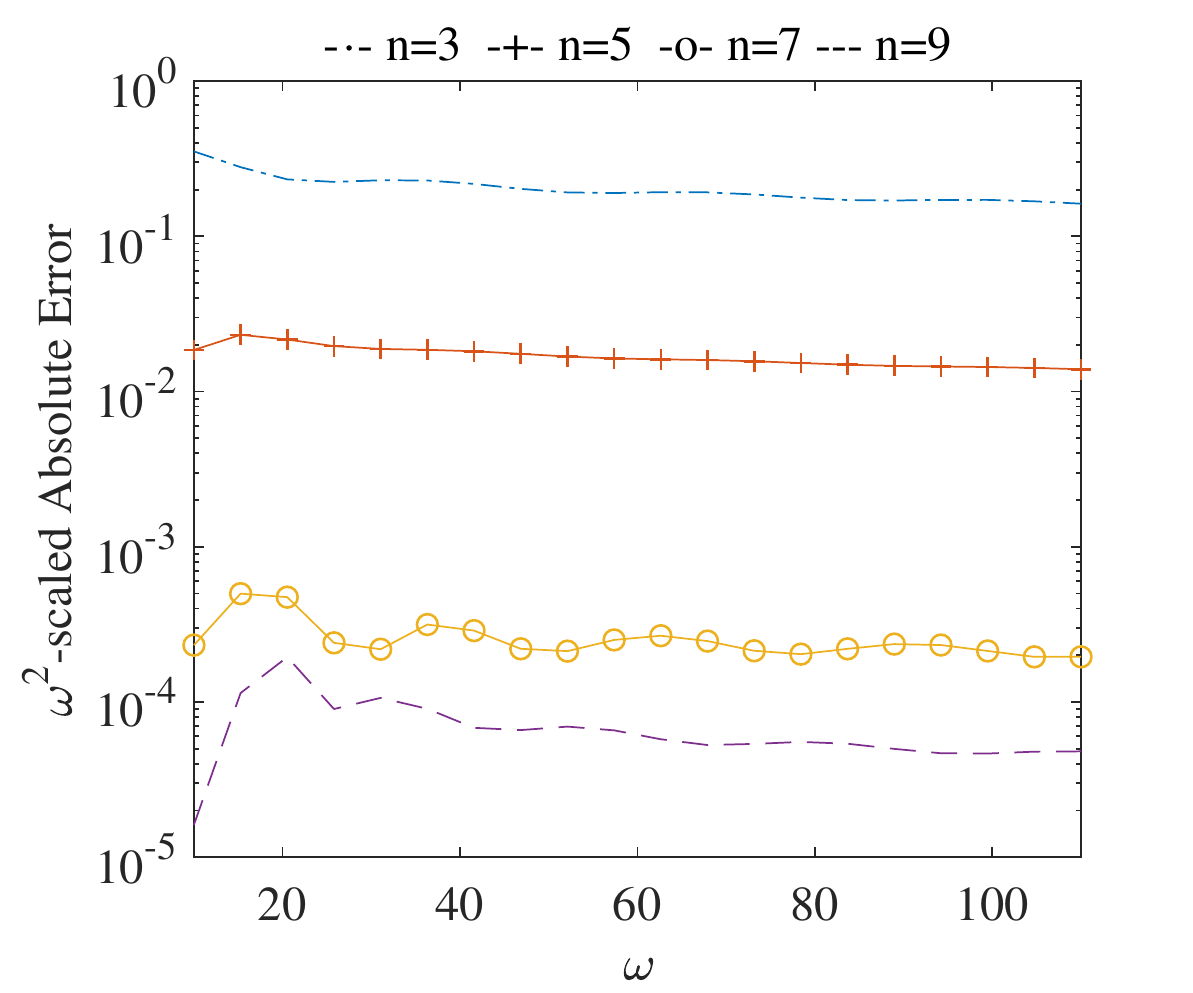}}
\hspace{0.1in}
\subfloat[$s=0$,$\alpha=-0.5$]{
\label{fig4:subfig:b} 
\includegraphics[width=3.0in]{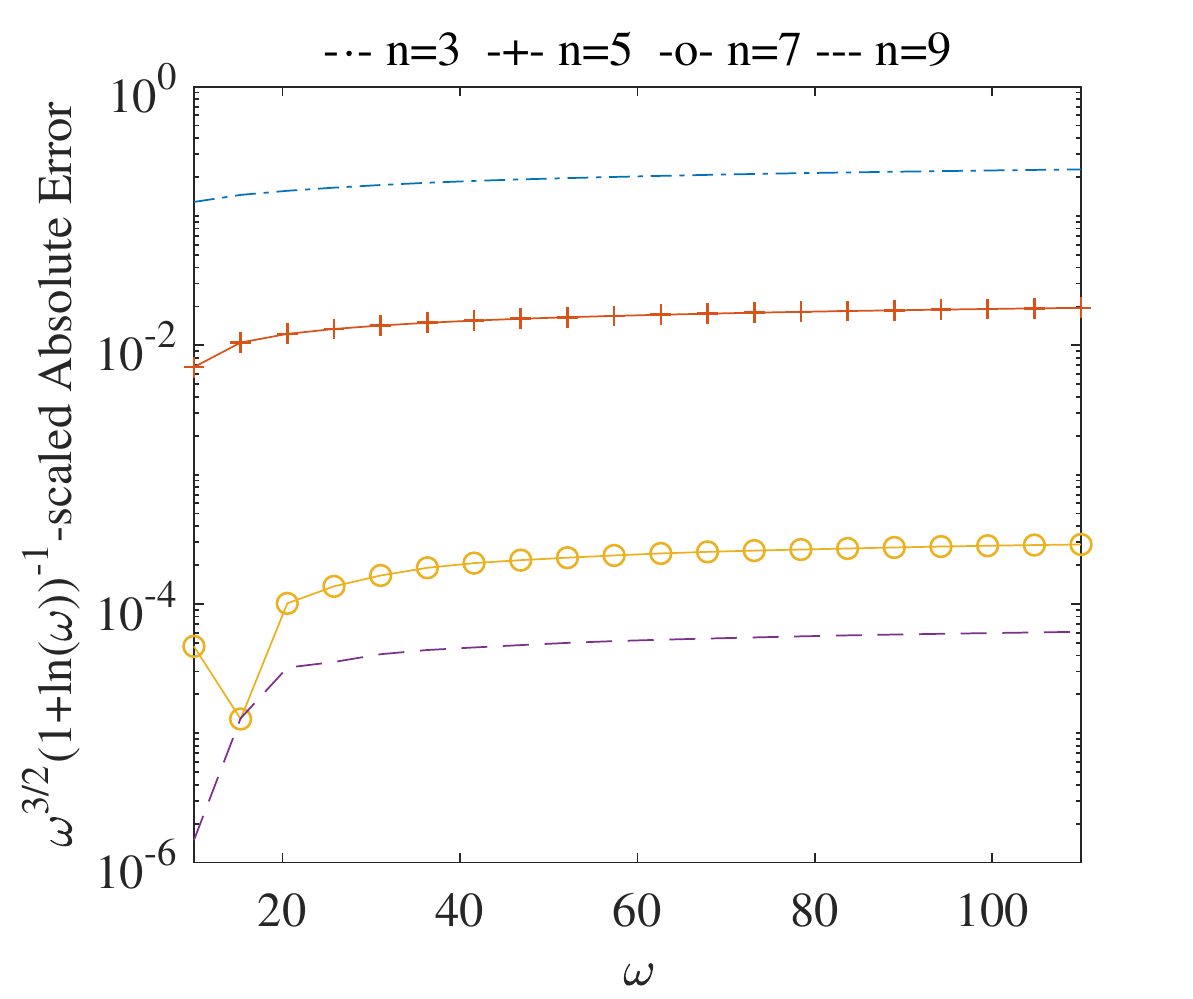}}

\subfloat[$s=1$,$\alpha=0.5$]{
\label{fig4:subfig:c}
\includegraphics[width=3.0in]{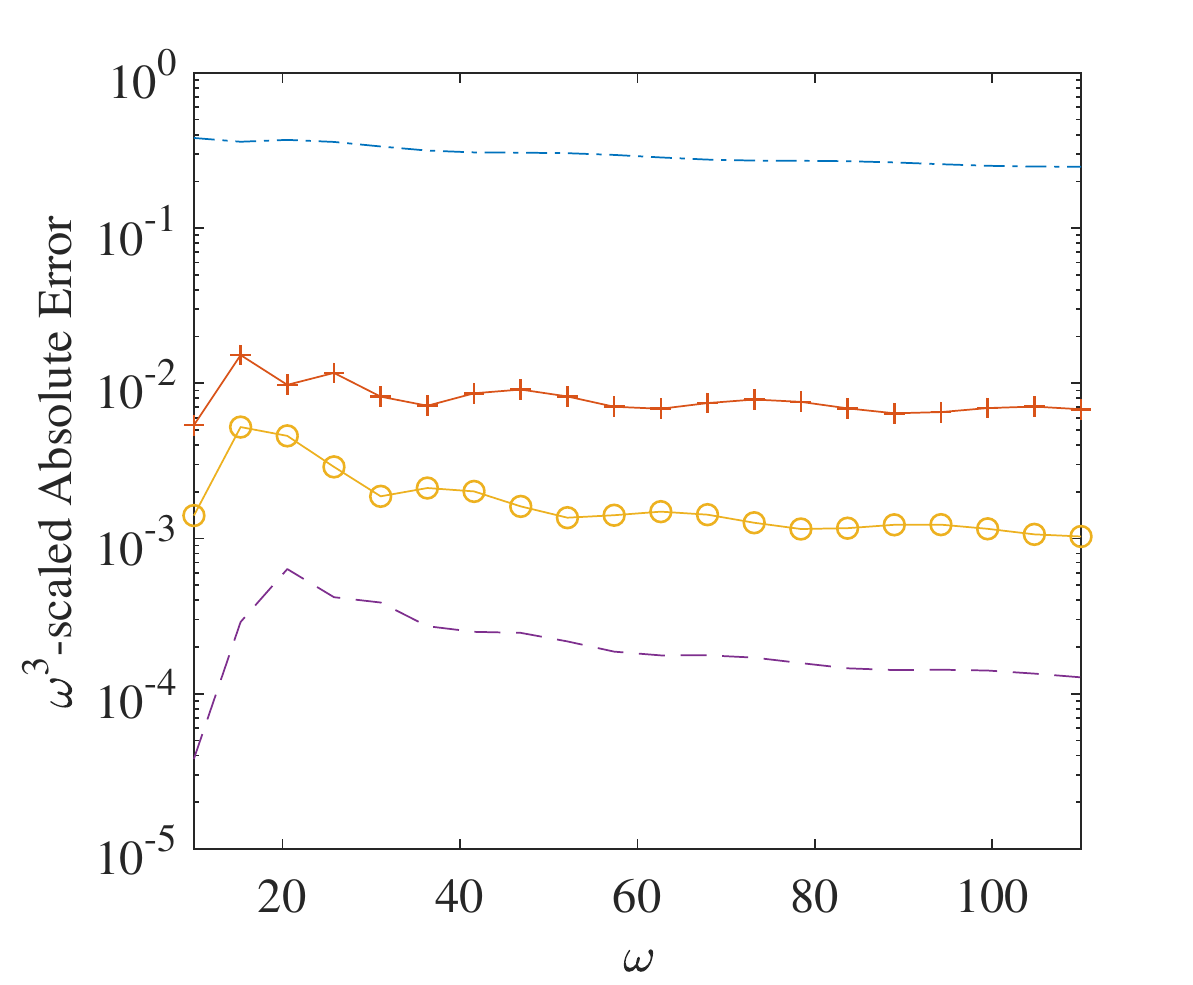}}
\hspace{0.1in}
\subfloat[$s=1$,$\alpha=-0.5$]{
\label{fig4:subfig:d} 
\includegraphics[width=3.0in]{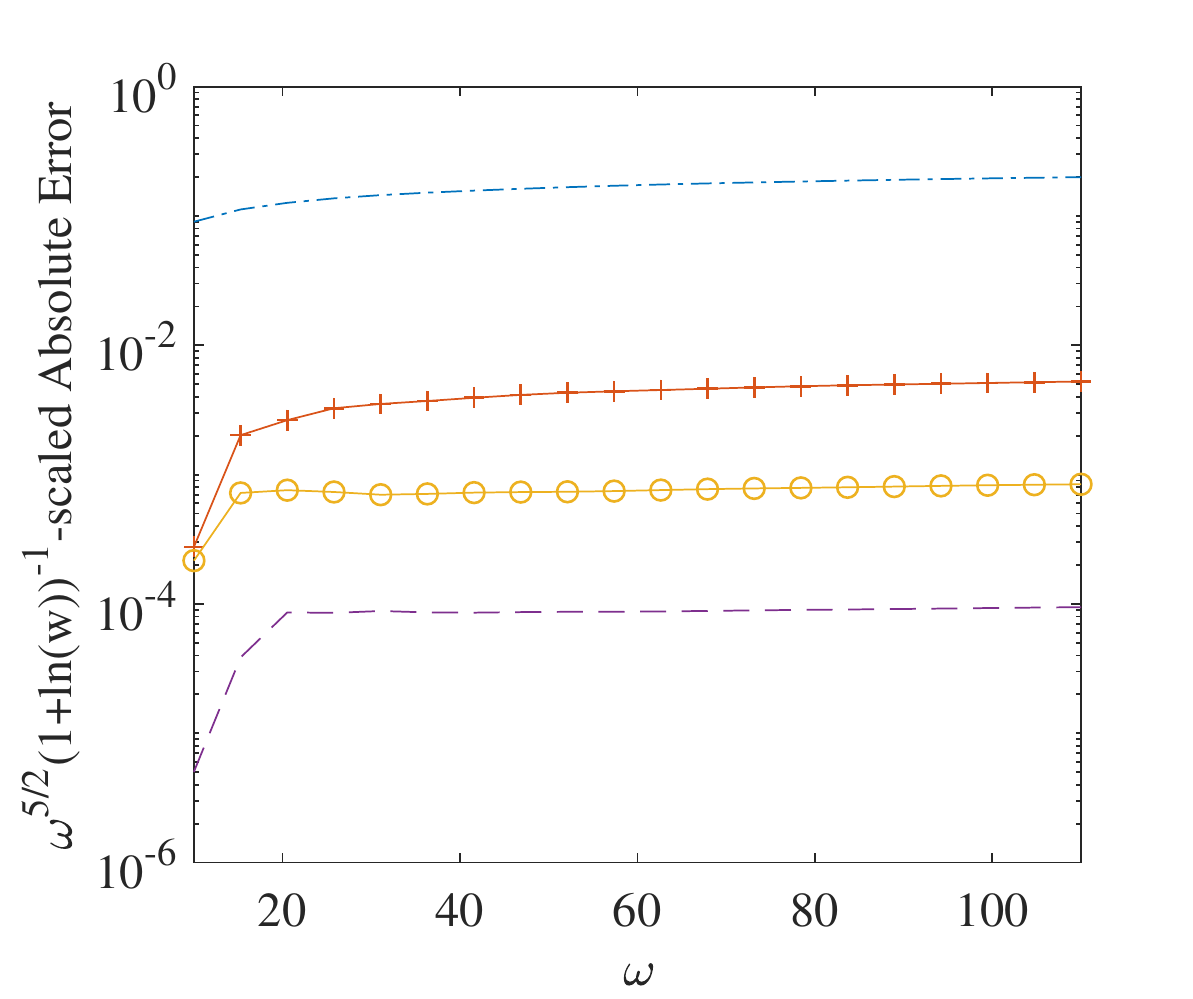}}

\subfloat[$s=2$,$\alpha=0.5$]{
\label{fig4:subfig:e} 
\includegraphics[width=3.0in]{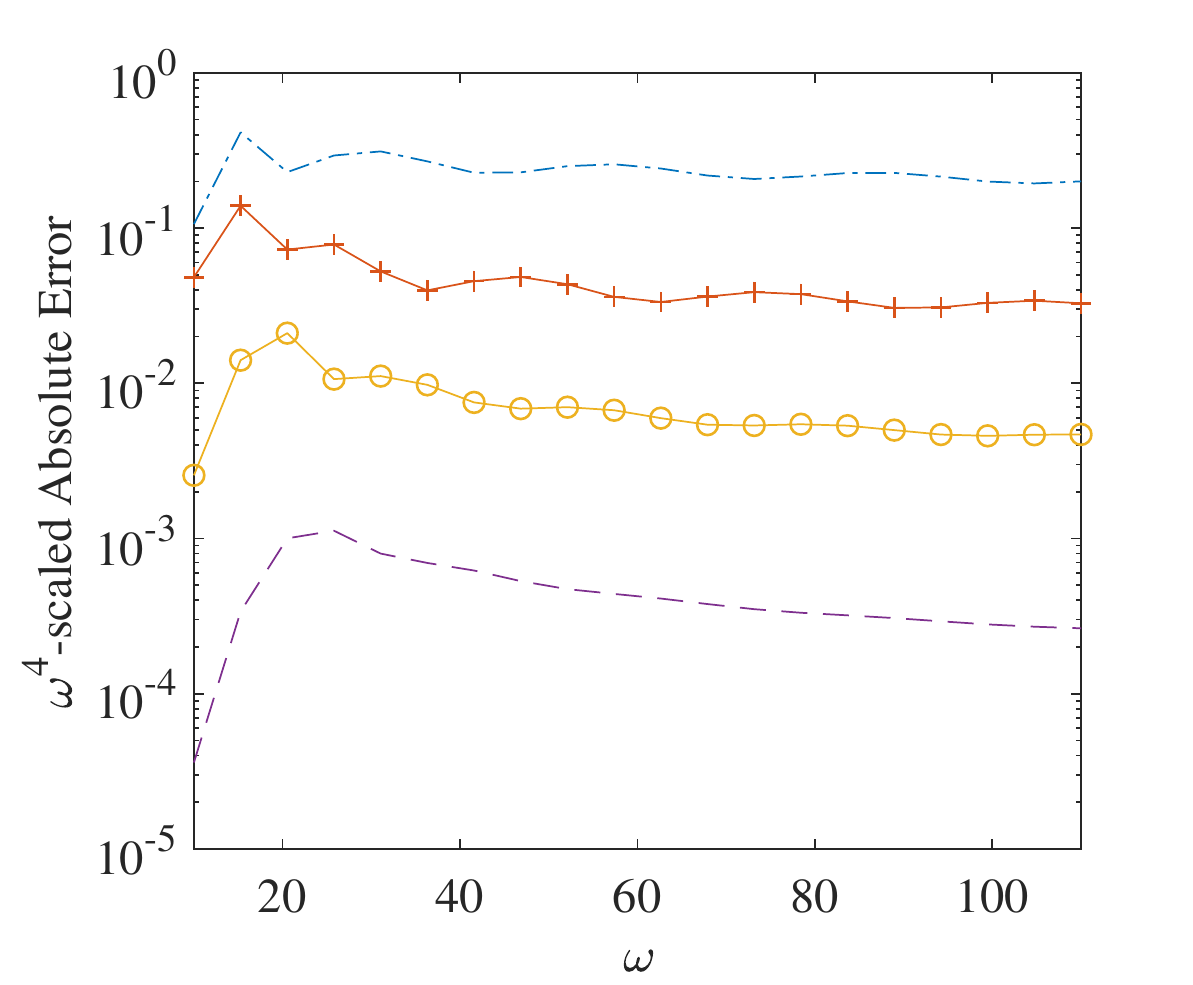}}
\hspace{0.1in}
\subfloat[$s=2$,$\alpha=-0.5$]{
\label{fig4:subfig:f} 
\includegraphics[width=3.0in]{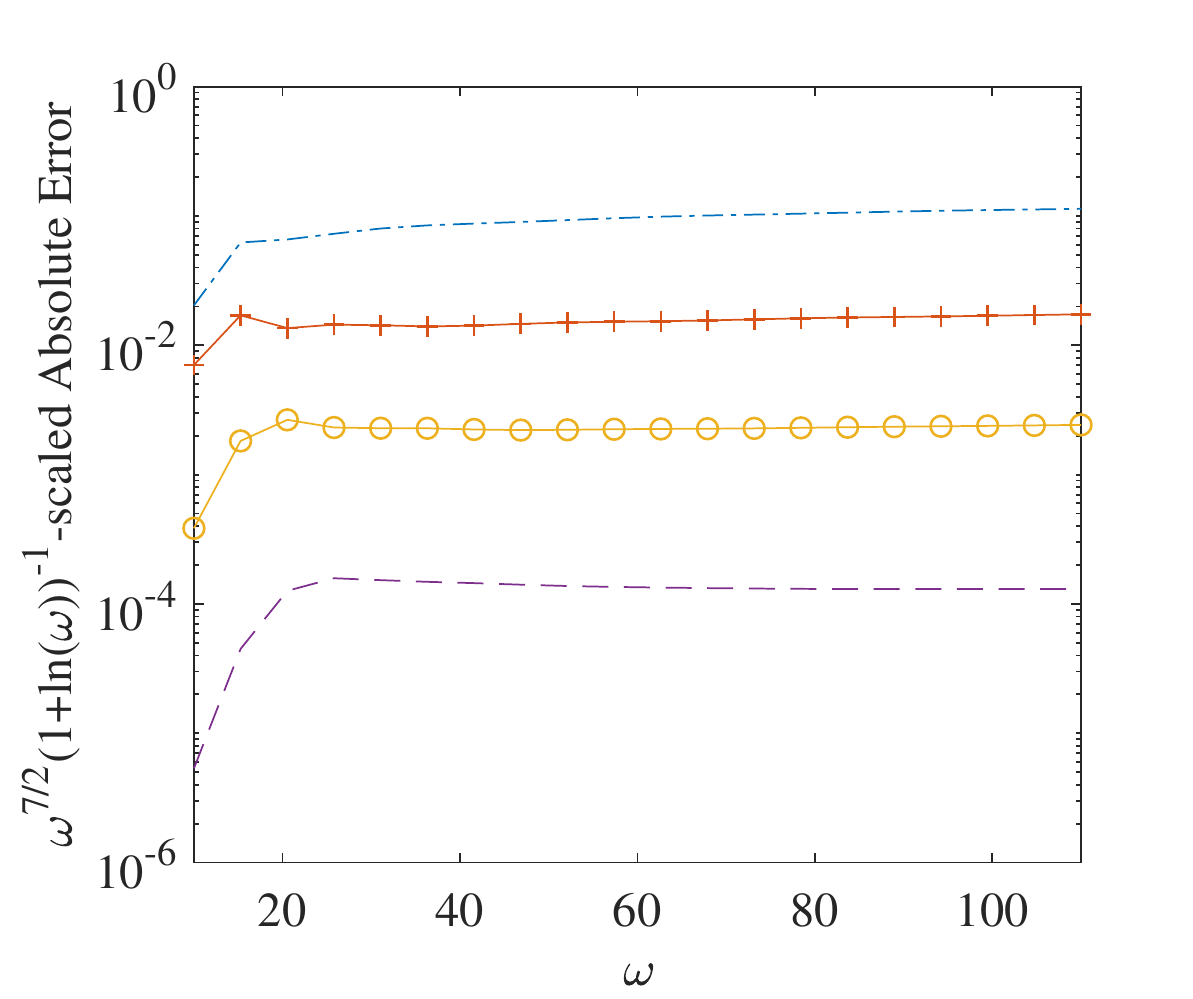}}
\caption{Scaled absolute errors of the new Levin method for the integral in Example 5.2 as a function of increasing $w$. Errors behave asymptotically as $\bO(\delta_\alpha (w)w^{-s-1-\min\{1+\alpha,1\}})$ for different values of $s$ and $\alpha$.}
\label{fig4} 
\end{figure}

\begin{figure}
\centering
\subfloat[$s=0$,$\alpha=0.5$]{
\label{fig5:subfig:a}
\includegraphics[width=3.0in]{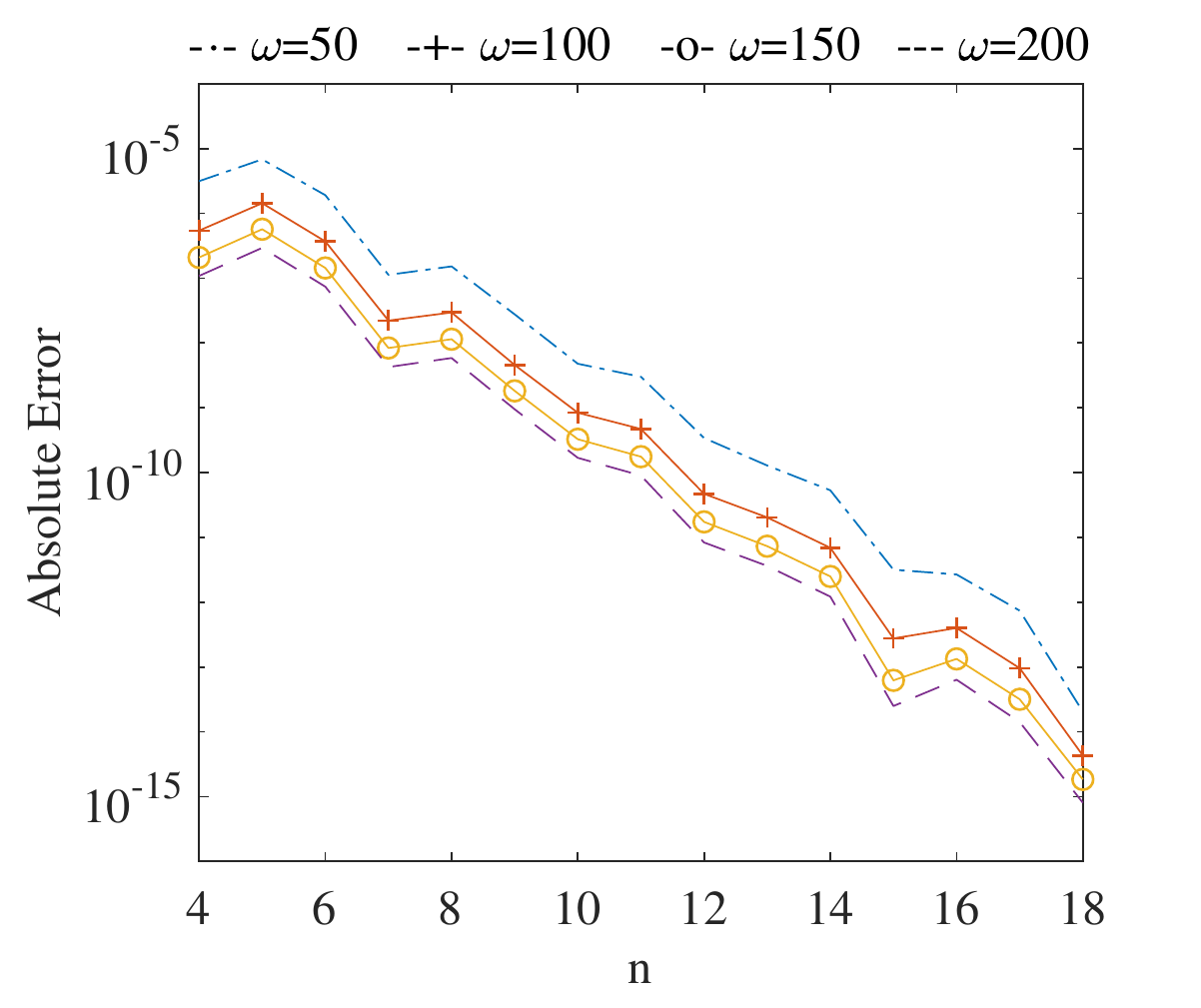}}
\hspace{0.1in}
\subfloat[$s=0$,$\alpha=-0.5$]{
\label{fig5:subfig:b} 
\includegraphics[width=3.0in]{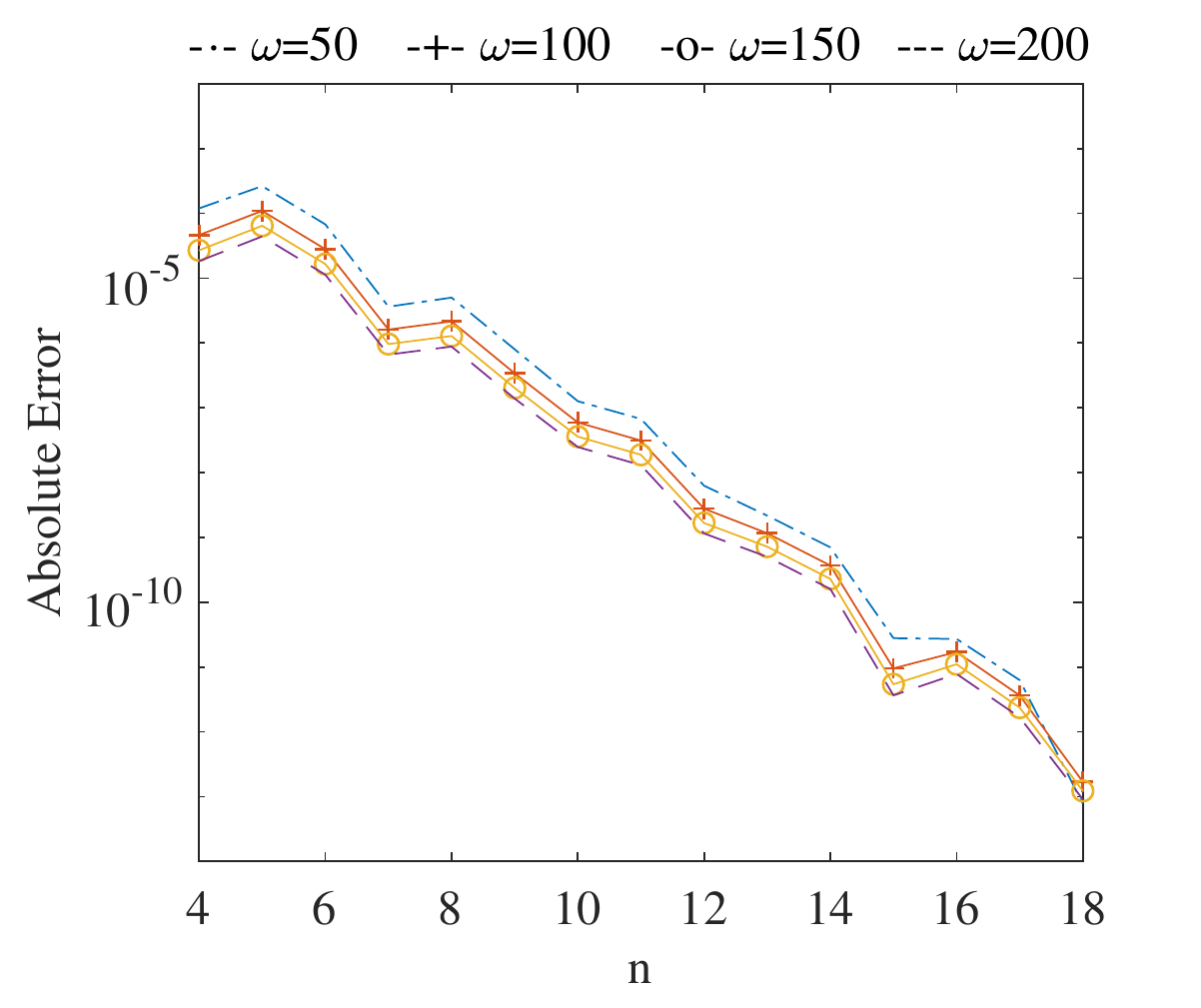}}

\subfloat[$s=1$,$\alpha=0.5$]{
\label{fig52:subfig:c}
\includegraphics[width=3.0in]{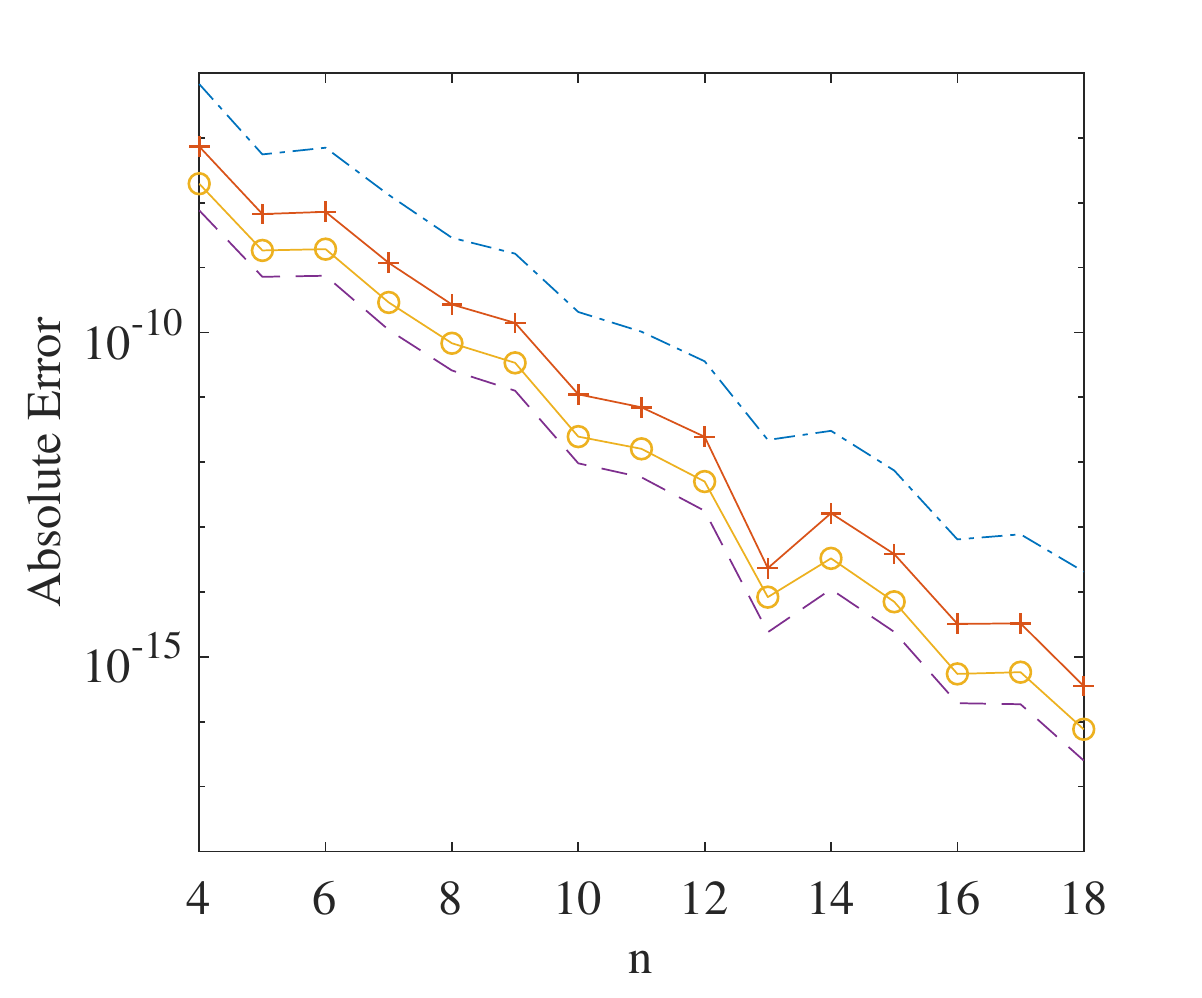}}
\hspace{0.1in}
\subfloat[$s=1$,$\alpha=-0.5$]{
\label{fig5:subfig:d} 
\includegraphics[width=3.0in]{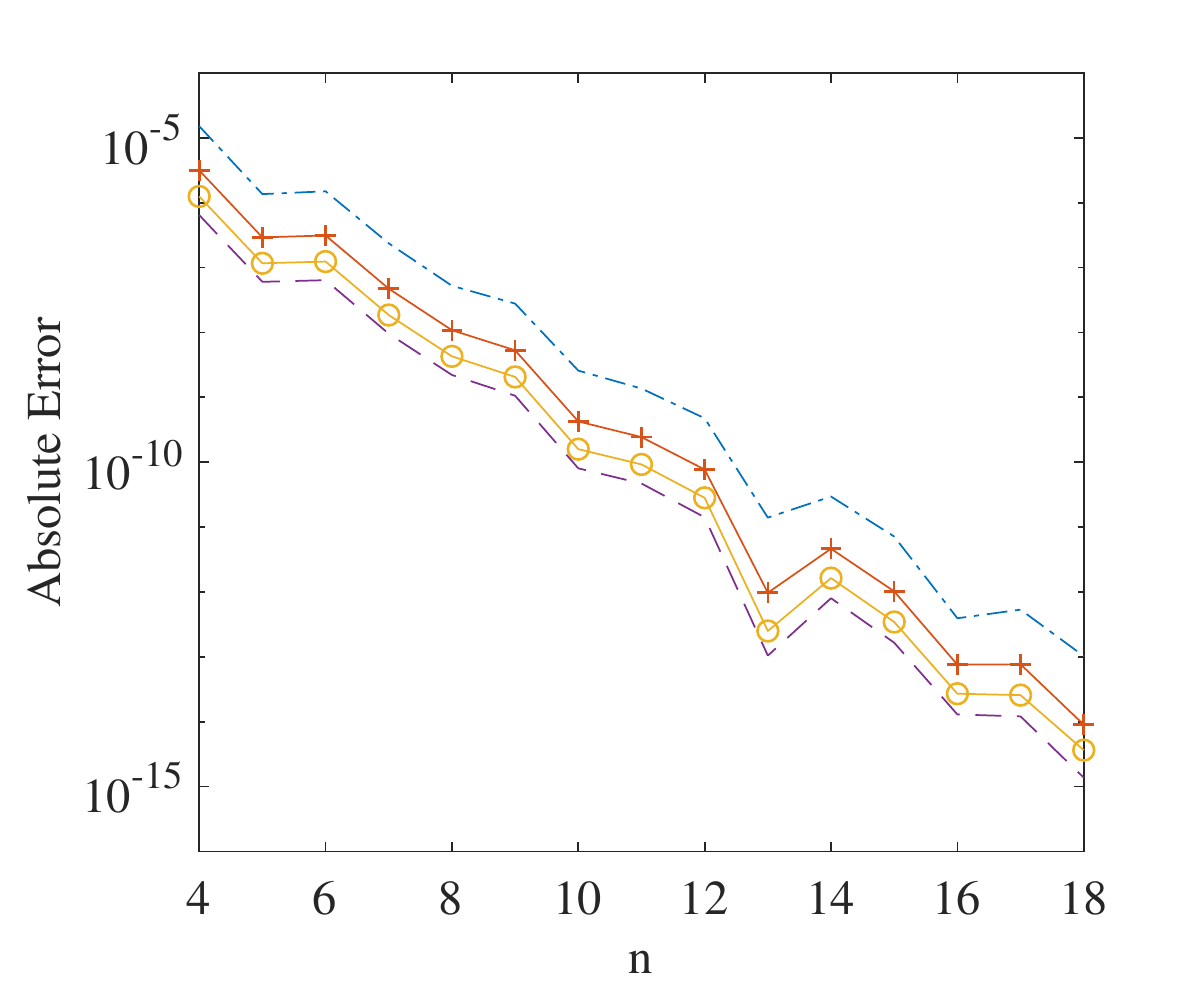}}

\subfloat[$s=2$,$\alpha=0.5$]{
\label{fig5:subfig:e} 
\includegraphics[width=3.0in]{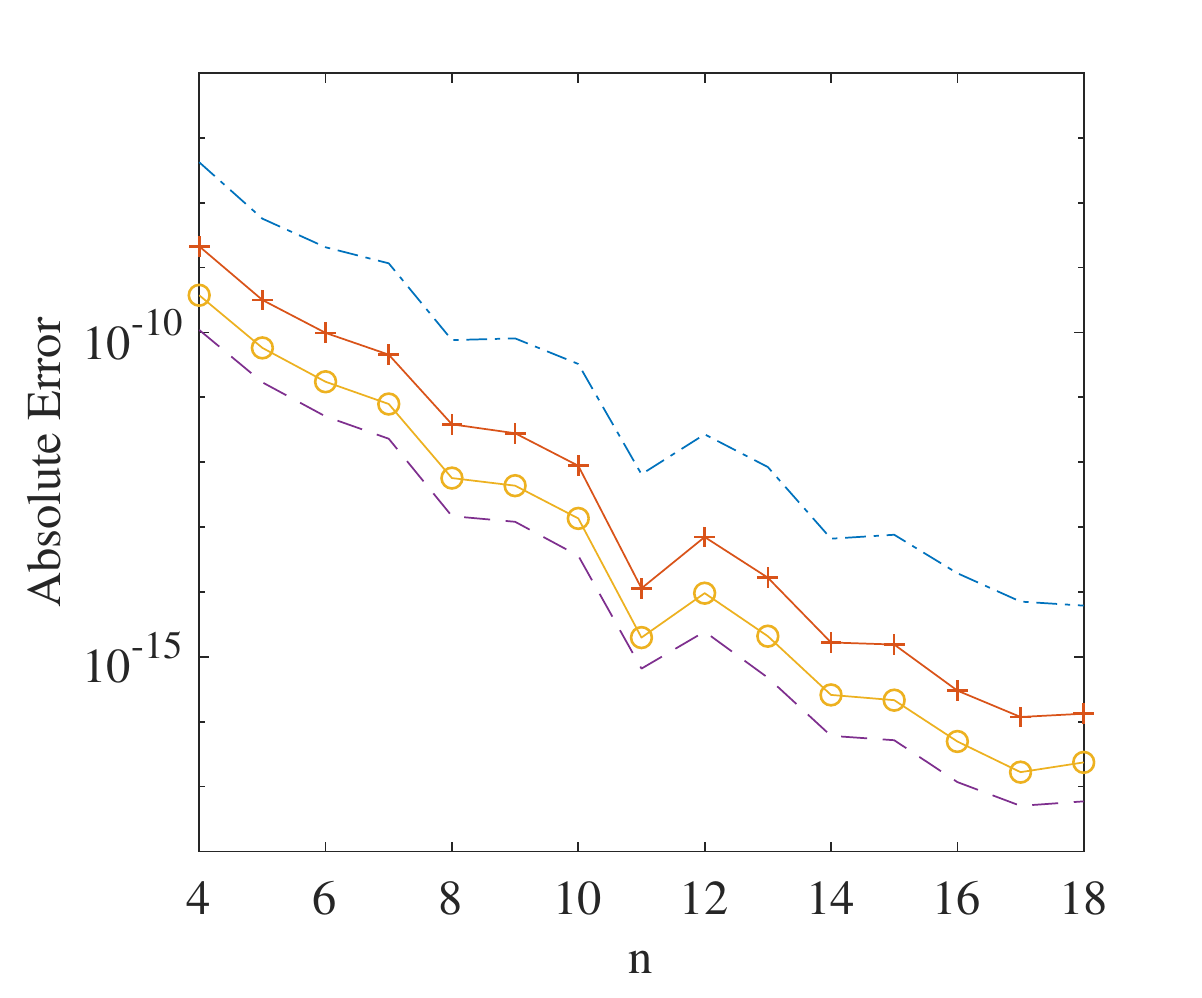}}
\hspace{0.1in}
\subfloat[$s=2$,$\alpha=-0.5$]{
\label{fig5:subfig:f} 
\includegraphics[width=3.0in]{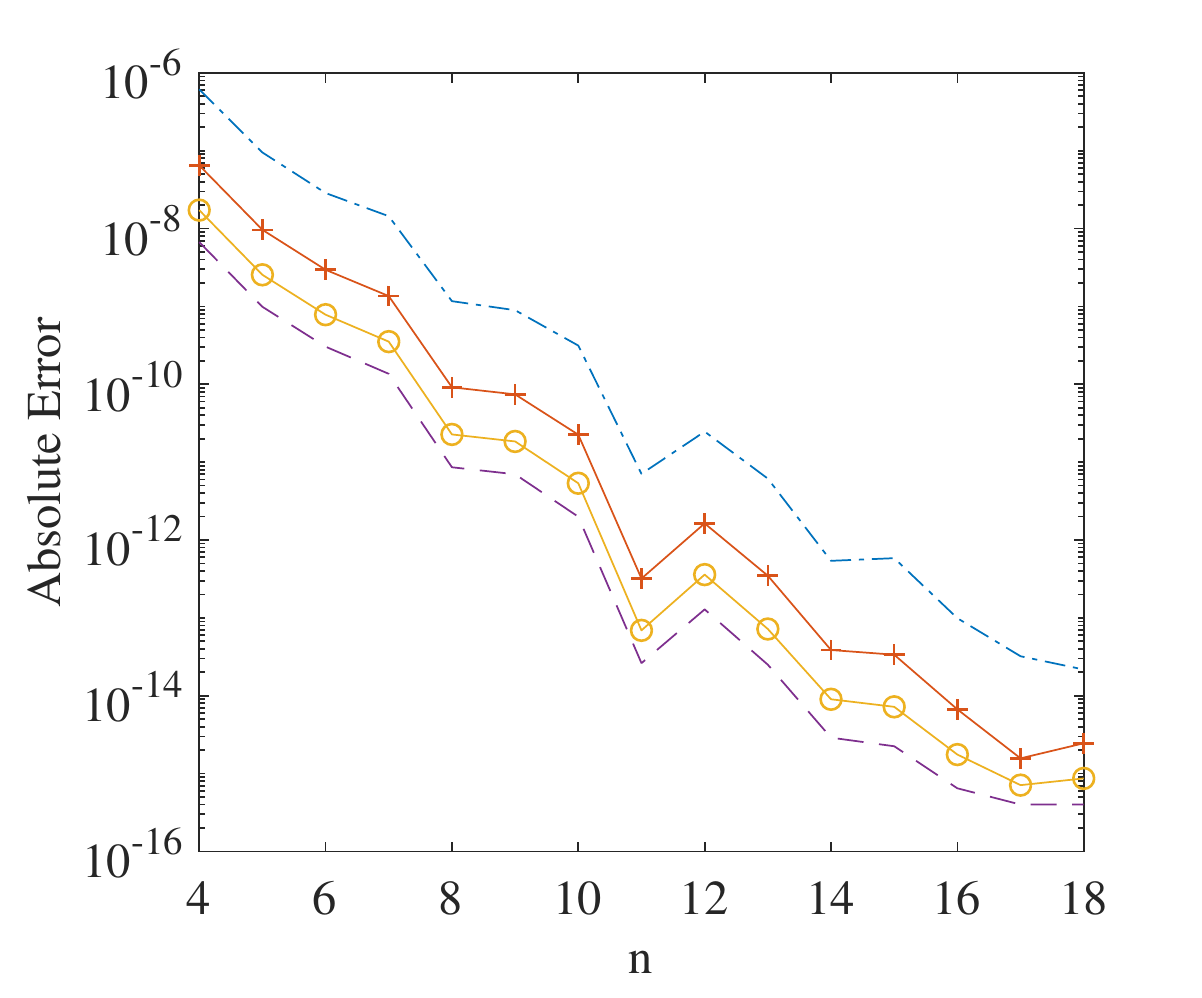}}
\caption{Absolute errors of the new Levin method for the integral in Example 5.2 as a function of increasing number of collocation points $n$. Exponential convergence is observed for different values of $s$ and $\alpha$.}
\label{fig5} 
\end{figure}

\begin{example}
To compare the convergence of the new Levin methods and the corresponding Filon-type methods, we consider two integrals with a non-linear oscillator,
\begin{equation}
\int_0^1\frac{1}{1+x^2}x^\alpha e^{iw(x^2+x+1)}dx\;\text{and}\;  \int_0^1\frac{1}{1+x^2}x^\alpha \log x e^{iw(x^2+x+1)}dx.
\end{equation}

\end{example}

Filon-type methods are implemented based on the basis $\Psi_M$ while Levin methods are based on Chebyshev polynomials. 
{\color{black}
Both adopt the modified Chebyhev-Lobatto points as collocation points.} Tables \ref{fig7} and \ref{fig8} show the absolute errors of integrals $\int_0^1\frac{1}{1+x^2}x^\alpha e^{iw(x^2+x+1)}dx$ and $\int_0^1\frac{1}{1+x^2}x^\alpha \log x e^{iw(x^2+x+1)}dx$, respectively. We fix $w=100$ and $\alpha=0.5$. It is shown that the errors of Levin methods are much smaller than those of Filon-type methods. 
{\color{black}
Hence, the new Levin methods outperform the Filon-type methods} when computing an integral with a nonlinear oscillator.

\begin{table}[htb]
\begin{center}
\small
\caption{Absolute errors of the Levin method and Filon-type method for integral $\int_{0}^{1}\frac{1}{1+x^2}x^\alpha e^{iw(x^2+x+1)}dx$ with $w=100$ and $\alpha=0.5$.}
\label{fig7}
\begin{tabular}{c|ccc|ccc}
\hline
\multirow{2}*{\centering $n$}  & \multicolumn{3}{c}{Levin ( $Q_{w,\alpha,n}^{L,s}[f]$ )}  &  \multicolumn{3}{|c}{Filon-type ( $Q_{w,\alpha,n}^{F,s}[f]$ )} \\
\cline{2-7}
& $s=0$  & $s=1$  &  $s=2$   & $s=0$  & $s=1$  &  $s=2$   \\
\hline
4&$1.5382e-05$&$2.6363e-07$&$1.2572e-08$&$4.3048e-05$&$2.1433e-06$&$2.1830e-07$\\ 
6&$2.3171e-06$&$2.5048e-08$&$1.6132e-09$&$2.7080e-05$&$1.4631e-06$&$1.5978e-07$\\ 
8&$3.0090e-07$&$1.4410e-09$&$1.8206e-10$&$1.6425e-05$&$9.5278e-07$&$1.0084e-07$\\ 
10&$2.9168e-08$&$2.1253e-10$&$1.8362e-11$&$9.4919e-06$&$5.9085e-07$&$6.0610e-08$\\ 
12&$2.5673e-09$&$3.8829e-11$&$1.7950e-12$&$5.3341e-06$&$3.5403e-07$&$3.6018e-08$\\ 
14&$1.8243e-10$&$4.9531e-12$&$1.9541e-13$&$2.9614e-06$&$2.0881e-07$&$8.6379e-09$\\ 
\hline
\end{tabular}
\end{center}
\end{table}

\begin{table}[htb]
\begin{center}
\small
\caption{Absolute errors of the Levin method and Filon-type method for integral $\int_{0}^{1}\frac{1}{1+x^2}x^\alpha\log(x)e^{iw(x^2+x+1)}dx$ with $w=100$ and $\alpha=0.5$.}
\label{fig8} 
\begin{tabular}{c|ccc|ccc}
\hline
\multirow{2}*{\centering $n$}  & \multicolumn{3}{c}{Levin ( $Q_{w,n}^{L,s}[f]$ )}  &  \multicolumn{3}{|c}{Filon-type ( $Q_{w,n}^{F,s}[f]$ )} \\
\cline{2-7}
& $s=0$  & $s=1$  &  $s=2$   & $s=0$  & $s=1$  &  $s=2$   \\
\hline
4&$2.2974e-05$&$9.0885e-07$&$3.1955e-08$&$5.0237e-05$&$4.3859e-06$&$1.7800e-07$\\ 
6&$2.4346e-06$&$1.1992e-07$&$3.6359e-09$&$3.0141e-05$&$2.5481e-06$&$1.3454e-07$\\ 
8&$1.2843e-07$&$1.2403e-08$&$4.0700e-10$&$1.9505e-05$&$1.4536e-06$&$1.1448e-07$\\ 
10&$6.3650e-09$&$1.1533e-09$&$4.7044e-11$&$1.1996e-05$&$8.2261e-07$&$7.9733e-08$\\ 
12&$2.3501e-09$&$9.4296e-11$&$5.2921e-12$&$6.9970e-06$&$4.7287e-07$&$4.9349e-08$\\ 
14&$3.5590e-10$&$8.0273e-12$&$5.5103e-13$&$3.9501e-06$&$2.7718e-07$&$1.1227e-08$\\ 
\hline
\end{tabular}
\end{center}
\end{table}

\begin{example}
In the final example, we show the efficiency of the new collocation method in Section 4 by recomputing the integral in Example \ref{example1} and comparing relative errors and CPU time with those of the CMFP.
\end{example}
To this end, we simply recall the quadrature formulas of the CMFP.
The moment-free Filon method in \cite{XIANG2007} approximates the integral $\int_a^b f(x)e^{iwg(x)}dx$ by
\[
Q^{[a,b],MF}_{w,m}[f,g]:=\int_{g(a)}^{g(b)} p_n(x)e^{iwx}dx,
\]
where $p_n$ is a polynomial of degree $n-1$ 
{\color{black}
that interpolates $\left[(f/g')\circ g^{-1}\right]$ at $g(t_j),j=0,1,\ldots,m$ and $t_j, j=0,1,\ldots,m$ comprise a set of distinguishing points on $[a,b]$. The composite moment-free Filon-type rules used in CMFP read}
\[
Q^{[a,b],CMF}_{w,n,m}[f,g]:=\sum_{j=1}^{n} Q^{[x_{j-1},x_j],MF}_{w,m}[f,g]\; \text{with}\; x_j=a+\frac{j}{n}(b-a), j=0,1,\ldots,n.
\]
The Gauss-Legendre quadrature rule for integral $\int_a^b f(x)dx$ is given by
\[
Q_m^{[a,b],GL}[f]:=\frac{b-a}{2}\sum_{j=1}^m w_j f\left(\frac{(b-a)t_j+b+a}{2}\right),
\]
where $w_j$ and $t_j$ are the standard weights and points of the Gauss-Legendre rule on the domain $[-1,1]$.
Suppose for a non-negative integer $r$ that the function $g\in C^{r+1}[0,1]$ has a single stationary point at zero and satisfies $g^{(j)}(0)=0$ for $j=1,\ldots,r$ and $g^{(r+1)}(x)\neq 0$ for $x\in[0,1]$. 
{\color{black}
Letting $\sigma_r:=\|g^{(r+1)}\|_\infty/(r+1)!$,} $w_r:=\max\{k\sigma_r,k\}$, and $\lambda_r:=w_r^{-1/(r+1)}$,
the CMFP method for integral $\int_0^1 f(x)e^{ikg(x)}dx$ is established by
\[
Q_{w,n,s,m_1,m_2}^{CMFP}[f,g]:=\lambda_r\sum_{j=1}^{s-1}Q_{m_1}^{[x_j,x_{j+1}],GL}[\varphi]+ \sum_{j=1}^{n} Q^{[y_{j-1},y_{j}],CMF}_{w,N_j,m_2}[f,g],
\]
where $x_0=0$, $x_j=(j/s)^p, p=(2m+1)/(1+\mu)$, $\mu$ is the index of singularity of $f$, $j=1,2,\ldots,s$, $\varphi(x)=f(\lambda_r x)e^{iwg(\lambda_r x)}$,
$y_j=w_r^{(j-n)/n/(r+1)}, j=0,1,\ldots,n$, $N_j=\lceil q_j^{m/(m-1)}\rceil$, $q_j=\max\{|g'(y_{j-1})|,|g'(y_j)|\}x_{j-1}/g(x_{j-1})$, $j=1,2,\ldots,n$, and $\nu$ is the index of singularity of $\left[(f/g')\circ g^{-1}\right]$. When $f$ has only the logarithmic singularity, the value of $\mu$ is set to 0.
 When $g(x)=x$, $Q_{w,n,s,m_1,m_2}^{CMFP}[f,g]$ is shortened to $Q_{w,n,s,m_1,m_2}^{CMFP}[f]$.

 The comparisons of relative errors and CPU time are shown in Figure \ref{fig3}, between the new Levin method by $Q_{-w,\alpha,n}^{L,0}[f]$ and the CMFP by $Q_{-w,n_1,n_1,4,4}^{CMFP}[f]$ with $r=0$ and $\sigma_r=1$, where $\alpha=-0.5$, $w=100$ or $1000$, $n$ is set to $[4,6,\ldots,36]$, and $n_1=2^{(n-2)/2}$. 
{\color{black}
The left-hand panel of Figure \ref{fig3} shows} that errors of the Levin method decrease faster and errors of the CMFP increase when the number of points is large enough (seen in the ellipse of dashes).
 Hence, the new Levin method is more stable. 
 {\color{black}
 The right-hand panel shows that the Levin method takes less time to attain machine precision and the CPU time grows slower. This is because the proposed method has reached superalgebraic convergence.}

\begin{figure}
\centering
\includegraphics[width=3in]{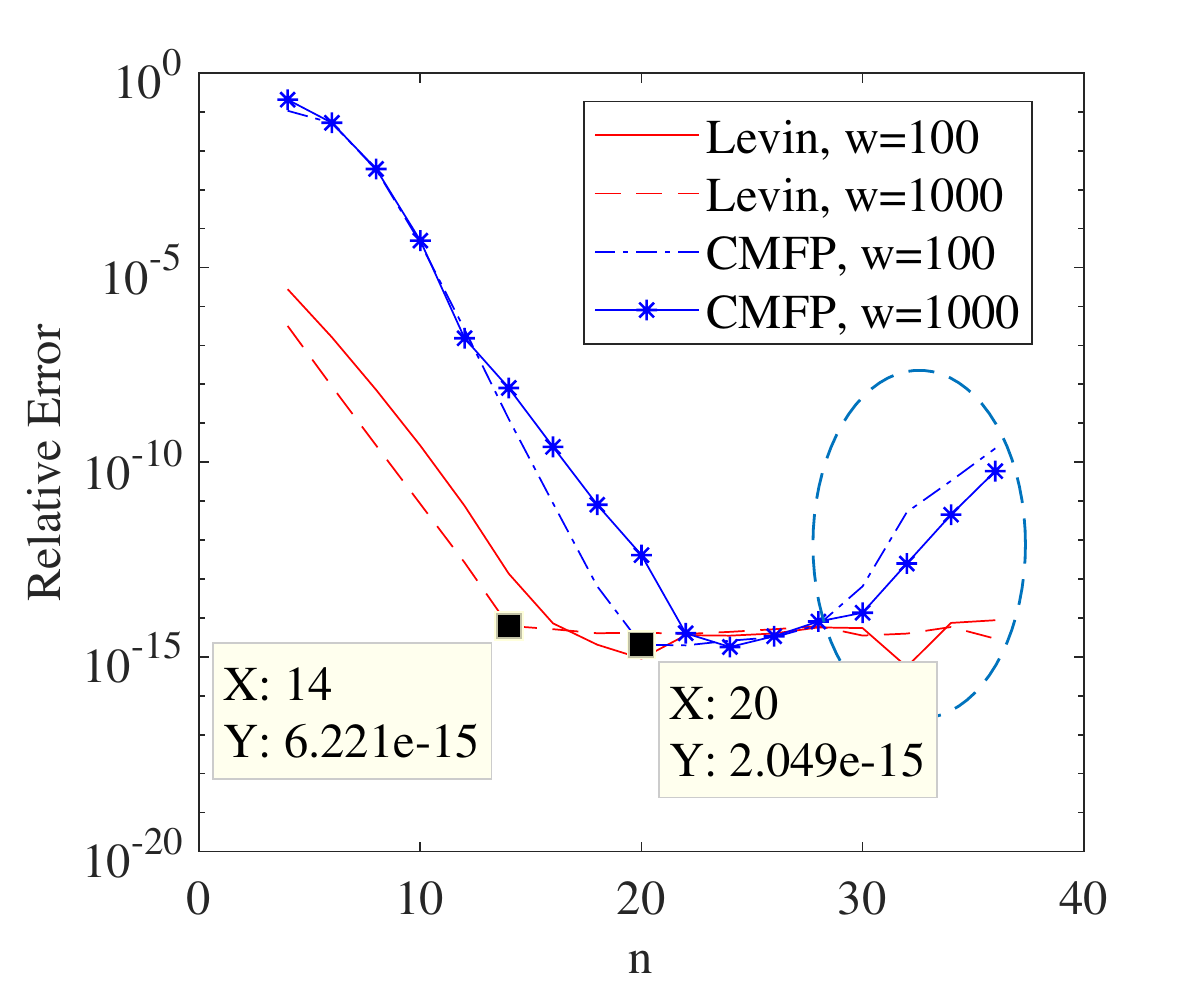}
\hspace{.31in}%
\includegraphics[width=3in]{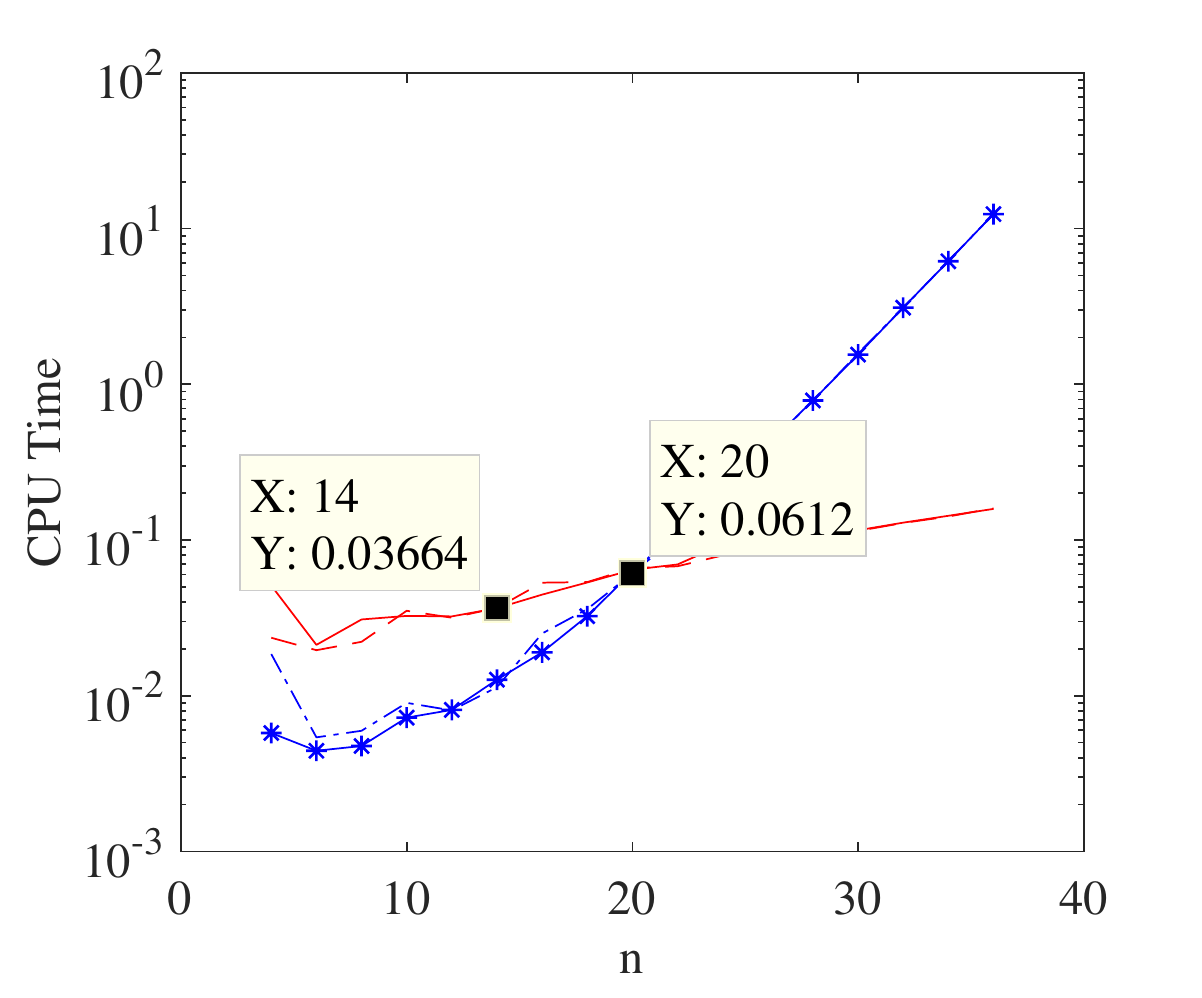}
\caption{Comparison of the relative errors (left) and the CPU time (right) in computing $\int_0^1 f(x)x^\alpha e^{-iwx}dx$ by the new Levin method ($Q_{-w,\alpha,n}^{L,0}[f]$) and CMFP ($Q_{-w,n_1,n_1,4,4}^{CMFP}[f] $), where $\alpha=-0.5$, $n$ is set to be $[4,6,\ldots,36]$, and $n_1=2^{(n-2)/2}$. }
\label{fig3}
\end{figure}

\begin{appendices}
\renewcommand{\theequation}{\thesection.\arabic{equation}}
\section{Proof of Lemma \ref{lemma3}}
To prove Lemma \ref{lemma3}, we first present a useful result in the next lemma.
\begin{lemma}\label{lemma01}
Let $w\in\aR$ be a parameter and assume that $g\in C^1[0,a]$ is a function independent of $w$ satisfying $g(0)=0$ and $g'(x)>0$ for $x\in[0,a]$. If $f\in C[0,a]$ and $q$ is a solution of the ODE,
    \begin{equation*}
        g(x)q'(x)+[k+\alpha+iwg(x)]g'(x)q(x)=f(x), k=1,2,\ldots,
    \end{equation*}
    with the initial condition $q(0)=0$, then there exists a constant $C$ independent of $w$ such that
    \begin{equation*}
        \|q\|_\infty\leq C\|f\|_\infty.
    \end{equation*}
\end{lemma}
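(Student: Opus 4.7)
The plan is to solve the ODE explicitly via an integrating factor and then estimate the resulting representation formula directly, using only that $|e^{iwg(x)}|=1$ for real $w$.

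First, for $x>0$ I would divide the equation by $g(x)$ to put it in standard linear form
\[
q'(x) + \left[\frac{(k+\alpha)g'(x)}{g(x)} + iw g'(x)\right] q(x) = \frac{f(x)}{g(x)}.
\]
An antiderivative of the coefficient of $q$ is $(k+\alpha)\ln g(x) + iwg(x)$, so the integrating factor is $\mu(x)=g(x)^{k+\alpha}e^{iwg(x)}$. Multiplying through yields $(\mu q)'(x) = g(x)^{k+\alpha-1}e^{iwg(x)}f(x)$. Since $g(0)=0$, $k\geq 1$, and $|\alpha|<1$ imply $k+\alpha>0$, the product $\mu(x)q(x)$ vanishes at $x=0$. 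Integrating from $0$ to $x$ then produces the explicit representation
\[
q(x) = g(x)^{-(k+\alpha)}e^{-iwg(x)} \int_0^x g(t)^{k+\alpha-1}e^{iwg(t)}f(t)\,dt.
\]

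Next, to extract a bound independent of $w$, I would take absolute values, exploit $|e^{iwg(\cdot)}|=1$, and change variables $u=g(t)$. Setting $M=\min_{[0,a]}g'>0$ (positive by continuity and the hypothesis $g'>0$), one has $dt = du/g'(g^{-1}(u))$, whence
\[
\left|\int_0^x g(t)^{k+\alpha-1}e^{iwg(t)}f(t)\,dt\right| \leq \frac{\|f\|_\infty}{M}\int_0^{g(x)} u^{k+\alpha-1}\,du = \frac{\|f\|_\infty}{M(k+\alpha)}\,g(x)^{k+\alpha}.
\]
Dividing by $g(x)^{k+\alpha}$ immediately gives $|q(x)| \leq \|f\|_\infty/[M(k+\alpha)]$ for every $x\in(0,a]$, and the case $x=0$ is handled by the initial condition $q(0)=0$ (or by continuity from the above). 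Taking the supremum yields the desired estimate with $C=1/[M(k+\alpha)]$, which is manifestly independent of $w$.

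The only delicate point is verifying that the boundary term $\mu(0)q(0)$ really vanishes, which is exactly where $k+\alpha>0$ is used; after that the argument is routine, because the oscillatory factor enters only through its modulus. Note that no cancellation from oscillation is needed here: the goal is uniform boundedness, not $w$-decay. The sharper $\bO(w^{-1})$ estimates required in Lemma \ref{lemma3} will be extracted afterward by applying this basic lemma in conjunction with integration-by-parts on the representation formula.
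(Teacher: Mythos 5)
Your proof is correct and follows essentially the same route as the paper: both multiply by the integrating factor $g^{k+\alpha}(x)e^{iwg(x)}$, obtain the identical explicit representation of $q$, and bound it using only $|e^{iwg(t)}|=1$. The sole (cosmetic) difference is in the last estimate, where you substitute $u=g(t)$ and use $\min_{[0,a]}g'>0$, while the paper bounds $g(t)$ above and below by constant multiples of $t$ via the mean value theorem before integrating $t^{k+\alpha-1}$; both yield a constant independent of $w$.
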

\begin{proof} Multiplying both sides with the term $g^{k+\alpha-1}(x)e^{iwg(x)}$, and integrating over the domain $[0,x]$, we derive
\begin{equation*}
    q(x)=g^{-k-\alpha}(x)e^{-iwg(x)}\int_0^x f(t)g^{k+\alpha-1}(t) e^{iwg(t)}dt.
\end{equation*}
Taking the absolute value,
\begin{equation*}
    |q(x)|
    \leq\|f\|_\infty \left| g^{-k-\alpha}(x) \right| \int_0^x \left|g^{k+\alpha-1}(t)\right| dt.
\end{equation*}
For any $x\in[0,a)$, there exists $\xi\in[0,a)$ such that $g(x)/x=g'(\xi)$. Since $g'(x)>0$ on $[0,a]$ and $g'\in C[0,a]$, there exist two positive constants $C_1$ and $C_2$ depending on $g$ and $a$ such that $C_1<|g(x)/x|<C_2$, $x\in[0,a]$. Then, it is obtained that
\begin{equation*}
    |q(x)|\leq\|f\|_\infty C_1^{-k-\alpha}C_2^{k+\alpha-1}x^{-k-\alpha}\int_0^x t^{k+\alpha-1}dt=\frac{C_2^{k+\alpha-1}}{(k+\alpha)C_1^{k+\alpha}} \|f\|_\infty.
\end{equation*}

The proof is finished by setting $C=\frac{C_2^{k+\alpha-1}}{(k+\alpha)C_1^{k+\alpha}}$.
\end{proof}

{\color{black} 
Now we are ready to prove Lemma \ref{lemma3}.}

\begin{proofspecial}
     Rewriting the ODE  \eqref{gq1eq} as
     \begin{equation}\label{flsode2}
        c_0+g(x)q_1(x)=\frac{1}{iwg'(x)}\left(f_1(x)-g(x)q_1'(x)-(1+\alpha)q_1(x)\right),
     \end{equation}
we then generate a sequence of successive approximations. 
{\color{black}
Setting the initial settings as $q_1^{[0]}\equiv0$,} we obtain, for $k\geq1$,
\begin{equation*}
\begin{split}
     \Phi^{[k]}(x)&:= \frac{1}{iwg'(x)}\left(f_1(x)-g(x)\bD q_1^{[k-1]}(x)-(1+\alpha)q_1^{[k-1]}(x)\right), \\
     c^{[k]}_0&:=\Phi^{[k]}(0),\\
     q_1^{[k]}(x)&:=\frac{1}{g(x)}\left(\Phi^{[k]}(x)-c^{[k]}_0\right).
\end{split}
\end{equation*}
Since $f_1\in C^{2n+1}[0,a]$ and $g\in C^{2n+2}[0,a]$ with $g'(x)>0$ for $x\in[0,a]$, it can be obtained by induction that $\Phi^{[k]}, q_1^{[k]}\in C^{2n-k+2}[0,a]$ and
\begin{equation}\label{qn2}
    \left\|\bD^{m}\Phi^{[k]}\right\|_\infty=\bO(w^{-1}),\max_{j}\left|c^{[k]}_j\right|=\bO(w^{-1}) \; \text{and}\; \left\|\bD^{m}q_1^{[k]}\right\|_\infty=\bO(w^{-1}),
\end{equation}
for $m=0,1,\ldots,2n-k+1,\; k=1,\ldots,n+1$. Therefore, functions $q_1^{[n+1]}$ and $c^{[n+1]}_0$ possess the desired property \eqref{property} and satisfy
\begin{equation*}
\begin{split}
    &iwg'(x)c^{[n+1]}_0+ g(x)\bD q_1^{[n+1]}(x)+[1+\alpha+iwg(x)]g'(x)q_1^{[n+1]}(x) \\
    &\quad\quad =f_1(x)+g(x)\bD\left(q_1^{[n+1]}(x)- q_1^{[n]}(x)\right)+(1+\alpha)g'(x)\left(q_1^{[n+1]}(x)- q_1^{[n]}(x) \right).
\end{split}
\end{equation*}

From the relation
\begin{equation*}
    \Phi^{[k+1]}(x)-\Phi^{[k]}(x)=-\frac{1}{iwg'(x)} \left[ g(x)\bD\left(q_1^{[k]}(x)- q_1^{[k-1]}(x)\right)+(1+\alpha)g'(x)\left(q_1^{[k]}(x)- q_1^{[k-1]}(x) \right) \right], \; k\geq1,
\end{equation*}
it is derived by induction that
\begin{equation*}
    \left\|g(x)\bD\left(q_1^{[n+1]}- q_1^{[n]}\right)(x) +(1+\alpha)g'(x)\left(q_1^{[n+1]}(x)- q_1^{[n]}(x) \right) \right\|_\infty=\bO(w^{-n-1}).
\end{equation*}
Now we define $q_1$ to be the solution of the ODE \eqref{gq1eq} with $c_0=c^{[n+1]}_0$ and initial condition $q_1(0)=q_1^{[n+1]}(0)$. The difference $d(x)=q_1(x)-q_1^{[n+1]}(x)$ satisfies
\begin{equation}\label{deq2}
\begin{split}
 &  g(x) d'(x)+[1+\alpha+iwg(x)]g'(x)d(x) \\
  &\quad\quad   =g(x)\bD\left(q_1^{[n]}(x)- q_1^{[n+1]}(x)\right)+(1+\alpha)g'(x)\left(q_1^{[n]}(x)- q_1^{[n+1]}(x) \right) \triangleq \psi^{[0]}(x),
\end{split}
\end{equation}
with zero initial condition, where $\psi^{[0]}\in C^{n}[0,a]$.
Since $\left\|\psi^{[0]}\right\|_\infty=\bO(w^{-n-1})$, we obtain from Lemma \ref{lemma01} that $\|d\|=\bO(w^{-n-1})$. Differentiating \eqref{deq2}, it follows that $d'$ satisfies
\begin{equation*}
    g(x)d''(x)+[2+\alpha+iwg(x)]g'(x)d'(x)=\psi^{[1]}(x),
\end{equation*}
with $\psi^{[1]}(x)=\bD \psi^{[0]}(x)- [(1+\alpha+iwg(x))g''(x)+iw(g'(x))^2]d(x)$. 
{\color{black}
It is clear that $\psi^{[1]}\in C^{n-1}[0,a]$ and $\left\| \psi^{[1]}\right\|_\infty=\bO(w^{-n})$, which implies that $\|d'\|_\infty=\bO(w^{-n})$.} Repeating the differential process, we have
\begin{equation}\label{dn2}
    \|\bD^md\|_\infty=\bO(w^{-n-1+m}), m=0,1,\ldots,n.
\end{equation}
{\color{black}
Combining \eqref{qn2} and \eqref{dn2}, the number $c^{[n+1]}_0$ and the solution $q_1$ meet the requirements, which finishes the proof.}
 \end{proofspecial}

\end{appendices}

\vspace{0.1cm} {\bf Acknowledgements}. The authors are grateful for the referees' helpful suggestions and insightful comments, which helped improve the manuscript significantly. The authors thank Dr. Saira and Dr. Suliman at Central South University for their careful checking of numerous details.

\end{CJK}
\end{document}